\newtheorem{theorem}{Theorem}[section]
\newtheorem{lemma}[theorem]{Lemma}
\newtheorem{corollary}[theorem]{Corollary}
\newtheorem{proposition}[theorem]{Proposition}
\newtheorem{remark}[theorem]{Remark}
\newtheorem{definition}[theorem]{Definition}
\newtheorem{assumption}{Assumption}
\newcommand{\real}{\mathbb{R}}
\newcommand{\grad}{\nabla}
\newcommand{\bdy}{\partial}
\newcommand{\cone}[1]{{\mathcal C\del{#1}}}
\DeclareMathOperator{\divgText}{div}
\newcommand{\divg}[1]{\divgText{#1}}
\newcommand{\Ltwo}[1]{{L^2\del{#1}}}
\newcommand{\normLt}[2]{\norm{#1}_\Ltwo{#2}}
\newcommand{\normS}[4]{\norm{#1}_\sob{#2}{#3}{#4}}
\newcommand{\normSZ}[4]{\abs{#1}_\sob{#2}{#3}{#4}}
\newcommand{\pair}[1]{\left\langle #1 \right\rangle}
\newcommand{\pairLt}[2]{{\mathop{\pair{#1}}}_{\Ltwo{#2}\times\Ltwo{#2}}}
\renewcommand{\restriction}{\vert}
\newcommand{\sob}[3]{{W^{#1}_{#2}\del{#3}}}
\newcommand{\sobZ}[3]{{\mathring{W}^{#1}_{#2}\del{#3}}}
\newcommand {\cB} {{\mathcal B}}
\newcommand {\cI} {{\mathcal I}}
\newcommand {\cJ} {{\mathcal J}}
\newcommand {\cL} {{\mathcal L}}
\newcommand {\cN} {{\mathcal N}}
\newcommand {\cQ} {{\mathcal Q}}
\newcommand {\cR} {{\mathcal R}}
\newcommand {\cT} {{\mathcal T}}
\newcommand {\cS} {{\mathcal S}}
\newcommand {\adjoint} {\varphi}
\newcommand {\yop} {{\overline y}}
\newcommand {\uop} {{\overline u}}
\newcommand {\adjop} {{\overline \adjoint}}
\begin{document}
\title{Optimal Control Of Surface Shape}\thanks{The research of Shawn W. Walker was supported by NSF grant DMS-1418994.}

\author{Harbir Antil}\address{Department of Mathematical Sciences. George Mason University, Fairfax, VA 22030, USA. Email: {\tt hantil@gmu.edu}. Phone: 703-993-5777.
Fax: 703-993-1491}.
\author{Shawn W.~Walker}\address{Department of Mathematics and Center for Computation and Technology (CCT), Louisiana State University, Baton Rouge, LA 70803, USA. Email: {\tt walker@math.lsu.edu}. Phone: 225-578-1603. Fax: 225-578-4276}.

\date{December XX, 2014}

\begin{abstract}
Controlling the shapes of surfaces provides a novel way to direct self-assembly of colloidal particles \emph{on those surfaces} and may be useful for material design.  This motivates the investigation of an optimal control problem for surface shape in this paper.  Specifically, we consider an objective (tracking) functional for surface shape with the prescribed mean curvature equation in graph form as a state constraint.  The control variable is the prescribed curvature.  We prove existence of an optimal control, and using improved regularity estimates, we show sufficient differentiability to make sense of the first order optimality conditions.  This allows us to rigorously compute the gradient of the objective functional for both the continuous and discrete (finite element) formulations of the problem.  Moreover, we provide error estimates for the state variable and adjoint state.  Numerical results are shown to illustrate the minimizers and optimal controls on different domains.
\end{abstract}
\begin{resume} ... \end{resume}
\subjclass{49J20, 35Q35,  35R35, 65N30}
\keywords{locally elliptic nonlinear PDE, $L^p-L^2$ norm discrepancy, finite element estimates, mean curvature}
\maketitle

\section*{Introduction}
\label{sec:intro}

Directed and self-assembly of micro and nano-structures is a growing research area with applications in material design \cite{Furst_PNAS2011, Vitelli_SM2013, Wang_APL2008}.  Controlling surface geometry can be beneficial for directing the assembly of micro-structures (colloidal particles) \cite{Irvine_Nature2010}.  This is because there is a coupling between the geometry of surfaces/interfaces and the arrangements of charged colloidal particles, or polymers, on those curved surfaces \cite{Lipowsky_PhysA1998, Zakhary_SM2013}; in particular, the presence of defects can seriously affect the surface geometry \cite{Irvine_SM2012, Irvine_Nature2010} and vice-versa.  Moreover, experimental techniques have been developed for creating ``custom shapes'' (from swell gels) by encoding a desired surface metric \cite{RMWilson_2012a}.

With the above motivation, we investigate an optimal PDE control problem which controls the surface shape by prescribing the mean curvature. We consider an open, bounded, $C^{1,1}$ domain $\Omega \subset \real^{n}$ for an embedded surface in $\real^{n+1}$, with boundary of $\Omega$ denoted by $\bdy\Omega$ and $n \ge 1$. If $X_1$ and $X_2$ are two Banach spaces, then $X_1 \hookrightarrow X_2$ and $X_1 \subset \subset X_2$ denote the continuous and compact embeddings of $X_1$ in $X_2$ respectively. $\sob{1}{p}\Omega$, $1 \le p \le \infty$ defines the standard Sobolev space with corresponding norm $\normS{\cdot}1p\Omega$. Moreover, $\sobZ{1}{p}\Omega$ indicates the Sobolev space with zero trace and $\sob{-1}{p'}\Omega$ is the canonical dual of $\sobZ{1}{p}\Omega$, for $1 \le p < \infty$, such that $1/p+1/{p'}=1$.  In deriving various inequalities and estimates, we pay special attention to the constants, $C$, involved.

Then we are interested in solving the following PDE-constrained optimization problem:
\begin{equation} \label{eq:cost}
    \inf \cJ\del{y,u} :=
    \frac{1}{2}\norm{y-y_d}_{L^2(\Omega)}^2
      + \frac{\alpha}{2} \norm{u}_{L^2(\Omega)}^2  \quad \mbox{over }
        y - v \in \sobZ1\infty{\Omega},u \in U_{ad} .
\end{equation}
subject to
\begin{equation} \label{eq:state}
    - \divg{\frac{\grad y}{\cQ(y)}} - u = 0  \quad \mbox{ in } \Omega .
\end{equation}
%
%
%
The second order nonlinear operator in \eqref{eq:state} describes the mean curvature in graph form, where $y$ is the height function, and $\cQ(y) = \del{1+\abs{\grad y}^2}^{1/2}$ denotes the surface measure.  Moreover, we have an integral constraint on $u$: for some fixed $p > n$ and fixed $\theta > 0$, $u$ is in the convex set
\begin{equation*}
    U_{ad} := \set{u \in L^2(\Omega) : \int_{\Omega} |u|^p \leq \theta^p}, \quad \text{(see Definition~\ref{def:admissible_control})}.
\end{equation*}
Eventually, see Remark \ref{rem:U_set_nonempty} and \thmref{thm:state_strong}, we will show there exists a value of $\theta$ for which $U_{ad}$ is not empty.  Note: throughout the entire paper, we now fix $p$ to a value strictly greater than $n$.
In principle, either $u$ or $v$ (boundary value) may act as a control variable, but in this work we will assume that $v$ is a fixed given function and $u$ is the control variable.

We emphasize that the mean curvature operator in \eqref{eq:state} is only locally coercive \cite[P. 104]{DKindelehrer_GStampacchia_1980}, which makes this problem harder than it appears. For instance, a compatibility condition between the domain $\Omega$ and right-hand-side $u$ must hold for \eqref{eq:state} to have a solution \cite{MGiaquinta_1974}.  For instance, integrating both sides of \eqref{eq:state} leads to
\begin{equation*}
  \left| \int_\Omega u \right| = \left| \int_\Omega \divg{\frac{\grad y}{\cQ(y)}} \right| =
  \left| \int_{\partial \Omega} \nu \cdot \frac{\grad y}{\cQ(y)} \right| \leq \int_{\partial \Omega} \left| \nu \cdot \frac{\grad y}{\cQ(y)} \right| \leq \int_{\partial \Omega} 1,
\end{equation*}
where $\nu$ is the outer unit normal of $\partial \Omega$.  Clearly $u$ cannot be too large if \eqref{eq:state} is to be meaningful; in fact, the compatibility condition is even more involved \cite{MGiaquinta_1974}.  Thus, \eqref{eq:state} is intricate, even for ``nice'' domains.

The control of mean curvature \eqref{eq:state} and similar operators in full generality has not been dealt with before. The closest approach is in \cite{HAntil_RHNochetto_PSodre_2014a, HAntil_RHNochetto_PSodre_2013a} where they study the control of a Laplace free boundary problem with surface tension effect for $n=1$. This amounts to solving a Laplace equation in the bulk which is a subset of $\real^{2}$ and the prescribed mean curvature equation \eqref{eq:state} on $(0,1) \subset \real^1$ for an embedded surface in $\real^{2}$. Furthermore, they replaced the curvature operator by a simpler version, i.e.
\begin{equation}\label{eq:curvature_operator_lin}
    \frac{\Delta y}{\cQ(y)}.
\end{equation}
In the present paper, we work in domains $\Omega \subset \real^n$, with $n \ge 2$, and we do not use the simplied curvature operator \eqref{eq:curvature_operator_lin}, i.e. we consider the general nonlinear operator \eqref{eq:state}. The second novelty of this paper is the proof of the existence of a strong unique solution to \eqref{eq:state}: for a given $u \in L^p(\Omega)$, $p > n$, if $u \in U_{ad}$, and $v \in W^2_p(\Omega)$, we prove that $y \in W^2_p(\Omega)$ (see, \thmref{thm:state_strong}). We remark that no smallness condition is assumed on the boundary data $v$. We use an implicit function theorem (IFT) \cite[2.7.2]{LNirenberg_1974a} based framework to prove this result.
This is an improvement over previous results in \cite{PAmster_MCMariani_2001a,PAmster_MCMariani_2003a}. The improvement being that in \cite[Theorem~1]{PAmster_MCMariani_2003a}, Amster et al
require $v \in \sob2p\Omega$ to be small enough. Moreover they use the Schauder theorem to show the existence and therefore $y$ may lack uniqueness.
The implicit function theorem framework not only gives us the existence and uniqueness but also the Fr\'echet differentiability of our control to state map \cite[Section~1.4.2]{MHinze_RPinnau_MUlbrich_SUlbrich_2009a}; the latter is crucial to derive the first order necessary optimality system.
In addition, by further assuming a smallness condition on the data $v$, we derive a continuity estimate for the solution to the state equation \eqref{eq:state} in \thmref{thm:fixed_point}.

The importance of such a continuity estimate is well-known in the literature; see \cite[Page~97]{DKindelehrer_GStampacchia_1980} for the obstacle problem with locally coercive-operators where a similar result leads to well-posedness. We will exploit this result to prove that the control-to-state map, in \lemref{lem:S_Lipschitz}, is Lipschitz continuous. This Lipschitz continuity will be used to prove the Lipschitz continuity of the Fr\'echet derivative of the control-to-state map in \lemref{lem:Sp_Lipschitz}. This is crucial to deal with the \emph{norm-discrepancy} in \lemref{lem:aux_estimate}, which then allows us to prove the quadratic growth condition in Corollary~\ref{cor:quad_growth}. Later, we utilize Corollary~\ref{cor:quad_growth} to show the order of convergence for the optimal control when discretized using finite element methods.

To summarize, we do not need a smallness assumption on $v$ but only on $u$ to prove the existence and uniqueness of a $W^2_p$ solution to \eqref{eq:state} within an IFT framework (\thmref{thm:state_strong}). However, for the remaining paper, we need a smallness condition on both $v$ and $u$. As pointed out earlier, such a condition on $u$ appears naturally due to the structure of \eqref{eq:state}. However, at first glance, the condition on $v$ might seem unnecessary. We would like to stress that without this additional assumption on the data $v$, using the  techniques developed in this paper, it is not possible to show the crucial $W^2_p$ a priori estimate for the solution to \eqref{eq:state}.

We discretize all the quantities using piecewise linear finite elements. For $n=2$, and using the continuity estimate, we derive an a priori finite element error estimate for the state following \cite{RHNochetto_1989a}. Invoking the discrete inf-sup conditions, we derive an a priori error estimate for the associated adjoint solution.
We extend a projection argument from \cite[Theorem~6.1]{HAntil_RHNochetto_PSodre_2013a} which, in conjunction with second order sufficient conditions, gives us a quasi-optimal a priori error estimate for the control.  If the control is discretized by piecewise constant finite elements, then the error estimate is optimal.


\section{The State Equation}

\subsection{Weak solution}

For a Lipschitz domain $\Omega$ and $v$ in $L^1(\Omega)$, Giaquinta in \cite{MGiaquinta_1974} gives a necessary and sufficient condition for the existence of a solution $y$ in the space of functions of bounded variation (BV) for the state equation \eqref{eq:state}.  In \thmref{thm:state_W11}, we state another existence result which says that if $v$ is slightly more regular, then $y$ is more regular as well.
\begin{theorem}[$W^1_1$ state]
\label{thm:state_W11}
Let $\Omega$ be Lipschitz and $v \in \sob11\Omega$. Then there exists an open set $U_1 \subset \sob{-1}\infty\Omega$, with $0 \in U_1$, such that for every $u \in U_1$, there exists a unique solution $y - v \in \sobZ{1}{1}{\Omega}$ solving \eqref{eq:state}.
\end{theorem}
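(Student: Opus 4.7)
The plan is to recast \eqref{eq:state} as a nonlinear operator equation and apply the Nirenberg-style implicit function theorem \cite[\S 2.7.2]{LNirenberg_1974a}. Setting $z := y - v$ so that $z \in \sobZ{1}{1}{\Omega}$, define
\begin{equation*}
F : \sobZ{1}{1}{\Omega} \times \sob{-1}\infty\Omega \to \sob{-1}\infty\Omega, \qquad F(z,u) := -\divg{\frac{\grad(z+v)}{\cQ(z+v)}} - u.
\end{equation*}
This map is well-defined and continuous because $\grad(z+v)/\cQ(z+v)$ has Euclidean norm pointwise bounded by $1$, hence lies in $L^\infty(\Omega;\real^n)$, and its distributional divergence pairs boundedly with $\sobZ{1}{1}{\Omega}$ test functions, placing it in $\sob{-1}\infty\Omega$. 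Solving \eqref{eq:state} with $y - v \in \sobZ{1}{1}{\Omega}$ is then equivalent to $F(z,u) = 0$.

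First I would secure a reference zero $F(z_0,0) = 0$, i.e.\ a $\sobZ{1}{1}{\Omega}$ solution of the minimal surface equation with Dirichlet datum $v$. Existence under $v \in \sob{1}{1}{\Omega}$ is classical: Giaquinta's variational criterion \cite{MGiaquinta_1974} produces a BV minimizer of $\int_\Omega \cQ(y)$ on $y \in v + \sobZ{1}{1}{\Omega}$, and the $\sob{1}{1}$ regularity of the trace should suffice to rule out boundary jumps and place the minimizer in $v + \sobZ{1}{1}{\Omega}$. Next I would compute the partial Fr\'echet derivative
\begin{equation*}
D_z F(z_0,0) w = -\divg{A(x)\grad w}, \qquad A(x) := \frac{1}{\cQ(y_0)}\left(I - \frac{\grad y_0 \otimes \grad y_0}{\cQ(y_0)^2}\right), \quad y_0 := z_0 + v,
\end{equation*}
and verify that it is a Banach-space isomorphism from $\sobZ{1}{1}{\Omega}$ onto $\sob{-1}\infty\Omega$. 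Granted this, the IFT yields an open neighborhood $U_1 \subset \sob{-1}\infty\Omega$ of $0$ together with a $C^1$ local inverse $u \mapsto z(u)$ satisfying $z(0) = z_0$, and $y(u) := z(u) + v$ is the desired unique solution for every $u \in U_1$.

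The main obstacle is the isomorphism property of the linearization in the endpoint non-reflexive pair $\sobZ{1}{1}{\Omega} \to \sob{-1}\infty\Omega$: although $A$ is bounded, its smallest eigenvalue $\cQ(y_0)^{-3}$ degenerates where $|\grad y_0|$ is large, and the standard $\sob{1}{p}_0 \to \sob{-1}{p'}$ theory for divergence-form operators with bounded measurable coefficients fails at $p = 1$. Overcoming this would likely combine a Stampacchia-type duality estimate with a priori bounds on $|\grad y_0|$ inherited from interior gradient estimates for minimal surfaces and from the trace regularity of $v$, perhaps working in an intermediate $\sob{1}{p}$ setting and transferring the result by duality to deliver the endpoint isomorphism needed to close the IFT argument.
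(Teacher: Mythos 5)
Your plan hinges on the linearization $w \mapsto -\divg{A\grad w}$ being a Banach-space isomorphism from $\sobZ{1}{1}{\Omega}$ onto $\sob{-1}\infty\Omega$, and this is precisely where the argument breaks down --- not merely as an unverified technicality, but structurally. Since $\sob{-1}\infty\Omega$ is the dual of $\sobZ{1}{1}{\Omega}$, membership of $-\divg{A\grad w}$ in that space requires the flux $A\grad w$ to pair boundedly against gradients of $\sobZ{1}{1}{\Omega}$ test functions, i.e.\ essentially $A\grad w \in L^\infty(\Omega)^n$. For $w \in \sobZ{1}{1}{\Omega}$ one only has $A\grad w \in L^1(\Omega)^n$, so the linearized operator is not even a bounded map into the target space; the same obstruction defeats Fr\'echet differentiability of your $F$ in the pair $(\sobZ{1}{1}{\Omega}, \sob{-1}\infty\Omega)$. (The \emph{nonlinear} operator does land in $\sob{-1}\infty\Omega$ only because of the special structure $\abs{\grad y/\cQ(y)} \le 1$; this does not survive linearization.) There is no endpoint $p=1$/$p=\infty$ elliptic theory to appeal to --- even $-\Delta$ fails to have such mapping properties, and the paper itself (see the remark following the theorem) notes that the analogous endpoint isomorphism $\sobZ1\infty\Omega \to \sob{-1}\infty\Omega$ is unclear; your suggestion of working in an intermediate $\sob1p\Omega$ and ``transferring by duality'' would only yield solvability in reflexive pairs, not the endpoint statement. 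A secondary gap: your reference solution $z_0$ at $u=0$ requires the minimal surface equation with Dirichlet datum $v$ to attain its boundary values in $v + \sobZ{1}{1}{\Omega}$; Sobolev regularity of the trace does not rule out boundary jumps of the BV minimizer --- attainment depends on the geometry of $\bdy\Omega$ (mean-curvature-type conditions), so this step is also unjustified for a general Lipschitz domain.

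The paper proves the theorem by an entirely different, variational route, citing Ekeland--Temam: one minimizes the convex functional $\int_\Omega \cQ(y) - \pair{u, y-v}$ over $y \in v \oplus \sobZ{1}{1}{\Omega}$, where smallness of $u$ in the $\sob{-1}\infty\Omega$ norm (an open neighborhood $U_1$ of $0$) is exactly what preserves coercivity of the area integrand against the linear perturbation, and strict convexity gives uniqueness. If you want to salvage an implicit-function-theorem argument, the paper's own Theorem~\tref{thm:state_strong} shows the correct functional setting: linearize in the pair $\sob2p\Omega \to L^p(\Omega)$ with $p>n$, where Gilbarg--Trudinger provides the isomorphism, rather than at the non-reflexive $W^{1}_1$/$W^{-1}_\infty$ endpoint.
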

\begin{proof}
See \cite[P. 351]{IEkeland_RTemam_1999a}.
\end{proof}

\thmref{thm:state_W11} further implies that for a given $u \in U_1$, there exists a unique
$y - v \in \sobZ11\Omega$ satisfying the state equation \eqref{eq:state}
in variational form
\begin{equation}
\label{eq:state_weak}
    \cB(y,w) = \varphi(w) \quad \mbox{ for all }
              w \in \sobZ11\Omega ,
\end{equation}
where
$\cB(y,w) := \int_\Omega \frac{\grad y}{\cQ(y)} \grad w$ and
$\varphi(w) := \pair{u,w}_{\sob{-1}{\infty}\Omega,\sobZ11\Omega}$, and $\pair{\cdot,\cdot}_{\sob{-1}\infty\Omega,\sobZ11\Omega}$ indicates the duality pairing.

\begin{remark}
We remark that for the existence of solutions in $W^1_1$, the standard PDE theory for linear equations only requires the data $u$ to be in $\sobZ1\infty\Omega^*$ \cite[Theorem~2.2]{AErn_JLGuermond_2006a}.  But \thmref{thm:state_W11} implies that given $u \in U_1$, which further belongs to $\sob{-1}\infty\Omega \subset \sobZ1\infty\Omega^*$, it is actually more regular. It might be possible to exploit this fact to prove that for $v \in \sob1\infty\Omega$, the solution $y - v \in \sobZ1\infty\Omega$.  For this to be true, our approach in \thmref{thm:state_strong} would require $\Delta$ (Laplacian operator) to be an isomorphism from $\sobZ1\infty\Omega$ to $\sob{-1}\infty\Omega$, which is not clear.
\end{remark}

For subsequent sections, we rewrite \eqref{eq:state_weak} using a nonlinear operator: find $y - v \in \sobZ1\infty\Omega$ satisfying
\begin{equation}
\label{eq:N}
\pair{\cN(y,u), w}_{\sob{-1}\infty\Omega,\sobZ11\Omega} := \cB(y,w) - \varphi(w) = 0 \quad \mbox{for all } w \in \sobZ11\Omega.
\end{equation}
\subsection{Differentiability of $\cN$}

Next we will study some differentiability properties of $\cN$, for the case when $v \in \sob1\infty\Omega$.
\begin{lemma}
\label{lem:Frechet_N_W1inf}
If $v \in \sob1\infty\Omega$, then for every $u \in U_1$, the operator $\cN(\cdot, u) : v\oplus \sobZ1\infty\Omega \rightarrow \sob{-1}\infty\Omega$ is twice Fr\'echet differentiable with respect to $y$ and the first order Fr\'echet derivative at $y \in v\oplus \sobZ1\infty\Omega$ satisfies
\[
    \pair{D_y \cN(y,u)\pair{h}, w}_{\sob{-1}\infty\Omega,\sobZ11\Omega}
     = \pair{ \del{ \cI - \frac{\grad y \grad y^T }{\cQ(y)^2} } \frac{\grad h}{\cQ(y)} ,
         \grad w}_{L^\infty(\Omega),L^1(\Omega)} .
\]
Moreover both the first and second order derivatives are Lipschitz continuous.
\end{lemma}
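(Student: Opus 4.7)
Since $\varphi(w) = \pair{u,w}$ is independent of $y$, it suffices to establish twice Fr\'echet differentiability (with Lipschitz derivatives) of the map $y \mapsto \cB(y,\cdot)$. I would introduce the smooth vector field $F \colon \R^n \to \R^n$, $F(p) = p/\cQ(p)$ with $\cQ(p) = (1+|p|^2)^{1/2}$, so that $\cB(y,w) = \int_\Omega F(\grad y)\cdot \grad w$. A direct computation gives
\[
DF(p) = \frac{1}{\cQ(p)}\del{I - \frac{pp^T}{\cQ(p)^2}},
\]
and inductively $D^kF$ is a rational combination of components of $p$ with negative powers of $\cQ(p)$. The elementary bounds $\cQ(p)\ge 1$ and $|p|/\cQ(p)\le 1$ show that $F \in C^\infty(\R^n;\R^n)$ with every derivative globally bounded on $\R^n$; in particular $\|D^kF\|_{L^\infty(\R^n)}\le C_k$ for $k=1,2,3$. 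This global boundedness is specific to the mean-curvature nonlinearity and is the key analytic ingredient; no hypothesis on $\grad y$ is ever needed.

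The second step is to show that the Nemytskii operator $\Phi : W^{1,\infty}(\Omega) \to L^\infty(\Omega;\R^n)$, $\Phi(y) := F(\grad y)$, is twice Fr\'echet differentiable with Lipschitz derivatives. For $y\in v\oplus \sobZ1\infty\Omega$ and $h\in \sobZ1\infty\Omega$, Taylor's theorem with integral remainder applied pointwise gives
\[
F(\grad y+\grad h)(x) - F(\grad y)(x) - DF(\grad y(x))\grad h(x) = \int_0^1 (1-t)\, D^2F(\grad y(x)+t\grad h(x))[\grad h(x),\grad h(x)]\,dt,
\]
and taking the essential supremum over $x\in\Omega$, together with $\|D^2F\|_\infty\le C_2$, yields $\|\Phi(y+h) - \Phi(y) - DF(\grad y)\grad h\|_{L^\infty}\le C_2\|\grad h\|_{L^\infty}^2$. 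Hence $D\Phi(y)h = DF(\grad y)\grad h$. Repeating the expansion one derivative higher and using $\|D^3F\|_\infty\le C_3$ gives the second derivative $D^2\Phi(y)[h_1,h_2] = D^2F(\grad y)[\grad h_1,\grad h_2]$; the same uniform bounds on $D^2F$ and $D^3F$, combined with the mean value theorem applied to $DF$ and $D^2F$, imply Lipschitz continuity of both $D\Phi$ and $D^2\Phi$.

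Finally I would apply the chain rule with the continuous linear map $\Lambda : L^\infty(\Omega;\R^n) \to \sob{-1}\infty\Omega$ defined by $\pair{\Lambda q, w} = \int_\Omega q\cdot \grad w$, whose boundedness follows from $|\int_\Omega q\cdot\grad w|\le \|q\|_{L^\infty}\|w\|_{\sobZ11\Omega}$. Since $\cB(y,\cdot) = \Lambda(\Phi(y))$ and $\Lambda$ is linear, the differentiability and Lipschitz regularity of $\cN(\cdot,u) = \Lambda\circ\Phi - \varphi$ transfer directly from $\Phi$, and substituting $DF(\grad y)\grad h$ into $\Lambda$ produces exactly the formula stated in the lemma. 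The only real care required is bookkeeping of spaces and norms — in particular ensuring that differentiability is genuinely into $L^\infty(\Omega)$ rather than merely into some $L^p$ — which is precisely why the global uniform bounds on $D^2F$ and $D^3F$ are indispensable. Because $F$ and its derivatives are bounded on all of $\R^n$, no smallness assumption on $v$ is needed for this lemma, in contrast with later parts of the paper.
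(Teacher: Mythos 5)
Your proof is correct, but it runs along a different track than the paper's. The paper argues by hand: it first establishes differentiability and Lipschitz continuity of $y \mapsto \cQ(y)$ through the explicit identity $\cQ(y+h)-\cQ(y) = \frac{\grad(2y+h)\cdot\grad h}{\cQ(y+h)+\cQ(y)}$, then writes out the residual $\cN(y+h,u)-\cN(y,u)-D_y\cN(y,u)\pair{h}$ and bounds it by algebraic manipulation (using, e.g., $a^3-b^3=(a-b)(a^2+ab+b^2)$ for the Lipschitz bound on $D_y\cN$), and finally asserts that the same computations handle the second derivative, with details omitted. You instead package the nonlinearity as the superposition operator $y \mapsto F(\grad y)$ with $F(p)=p/\cQ(p)$, exploit that $F\in C^\infty(\R^n;\R^n)$ with globally bounded derivatives of every order, obtain $D\Phi$ and $D^2\Phi$ from pointwise Taylor expansion with integral remainder, and transfer everything to $\sob{-1}\infty\Omega$ through the bounded linear map $\Lambda$; this is a valid chain-rule argument, and your formula for $DF(p)$ reproduces exactly the stated derivative. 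What your route buys is uniformity: the second Fr\'echet derivative and its Lipschitz continuity come for free from the bounds on $D^2F$ and $D^3F$, precisely the part the paper leaves as "details omitted," and the global boundedness of $D^kF$ makes transparent why no smallness of $v$ or $\grad y$ is needed. What the paper's route buys is explicit, elementary constants (it remarks that most are bounded by $1$), and the intermediate identities \eqref{eq:Q_diff}, \eqref{eq:Q_Lip}, \eqref{eqn:inverse_Q_Lipschitz} for $\cQ$ and $1/\cQ$ are reused verbatim later (e.g.\ in \thmref{thm:fixed_point} and \lemref{lem:A_Lipschitz}), so the hands-on computation is not wasted effort. If you flesh out your sketch, the only place deserving a line of justification is the inductive claim that every $D^kF$ is globally bounded (each term is a homogeneous polynomial of degree $d$ in $p$ over $\cQ(p)^k$ with $d\le k$, a structure preserved under differentiation), together with the routine measurability of $x\mapsto DF(\grad y(x))\grad h(x)$.
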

\begin{proof}
The derivation of $D_y\cN$ is straightforward, so is omitted. We begin by first showing that $\cQ : v\oplus\sobZ1\infty\Omega \rightarrow L^\infty(\Omega)$ is Fr\'echet differentiable. Let $y \in v\oplus\sobZ1\infty\Omega$ and $h \in \sobZ1\infty\Omega$ (note: $y+h \in v\oplus\sobZ1\infty\Omega$). To this end we need to show that for every $\epsilon > 0$ there exists a $\delta > 0$, such that for $\normS{h}1\infty\Omega < \delta$
\[
    \frac{\norm{\cQ(y+h)-\cQ(y)-D_y\cQ(y)\pair{h}}_{L^\infty(\Omega)}}{\normS{h}1\infty\Omega} < \epsilon , \quad \text{where} \quad D_y\cQ(y)\pair{h} = \frac{\grad y}{\cQ(y)} \cdot \grad h.
\]
Define the residual $\cR_1 = \cQ(y+h)-\cQ(y)-D_y\cQ(y)\pair{h}$. Using an algebraic manipulation, we get
\begin{equation}\label{eq:Q_diff}
    \cQ(y+h)-\cQ(y) = \frac{\grad (2y+h) \cdot \grad h}{\cQ(y+h) + \cQ(y)} ,
\end{equation}
whence
\begin{align*}
    \cR_1 = \del{ \frac{\grad (2y+h) }{\cQ(y+h) + \cQ(y)}
           - \frac{\grad y }{\cQ(y)}}\cdot \grad h
      = \frac{\del{\cQ(y)-\cQ(y+h)} \grad y + \cQ(y) \grad h}{\cQ(y)\del{\cQ(y+h) + \cQ(y)}}
           \cdot \grad h    .
\end{align*}
Invoking the $L^\infty$ norm and using the necessary regularity of the underlying terms, we deduce
\[
    \norm{\cR_1}_{L^\infty(\Omega)} \le \del{ \norm{\cQ(y)-\cQ(y+h)}_{L^\infty(\Omega)}  + \normS{h}1\infty\Omega } \normS{h}1\infty\Omega .
\]
It only remains to show that $\cQ$ is a Lipschitz continuous function. In view of \eqref{eq:Q_diff}, for $y,z \in v\oplus \sobZ1\infty\Omega$, $y \neq z$ we get
\begin{equation}\label{eq:Q_Lip}
    \norm{\cQ(y)-\cQ(z)}_{L^\infty(\Omega)}
        \le \norm{\frac{\grad (y+z)}{\cQ(y) + \cQ(z)}}_{L^\infty(\Omega)}
                    \normS{y-z}1\infty\Omega  \le \normS{y-z}1\infty\Omega,
\end{equation}
i.e., $\cQ(\cdot)$ is Fr\'echet differentiable.  Thus, we find that $\norm{\cR_1}_{L^\infty(\Omega)} \le 2 \normS{h}1\infty\Omega^2$.

Next, we use the definition of $\cN$ from \eqref{eq:N} to define the residual $\cR_2 = \cN(y+h,u) - \cN(y,u) - D_y\cN(y,u)\pair{h}$ and write it as
\begin{equation}\label{eqn:Frechet_N_W1inf_pf_1}
    \pair{\cR_2, w}_{\sob{-1}\infty\Omega,\sobZ11\Omega} = \pair{ \overbrace{\frac{\grad (y+h)}{\cQ(y+h)} - \frac{\grad y}{\cQ(y)} - \del{ \cI - \frac{\grad y \grad y^T }{\cQ(y)^2} } \frac{\grad h}{\cQ(y)}}^{=:\widetilde{\cR}_2} , \grad w}_{L^\infty(\Omega),L^1(\Omega)}.
\end{equation}
Some manipulation gives
\begin{equation*}
\begin{split}
    \widetilde{\cR}_2 &= \grad (y+h) \del{ \frac{1}{\cQ(y+h)} - \frac{1}{\cQ(y)} } + \frac{\grad y \grad y^T }{\cQ(y)^2} \frac{\grad h}{\cQ(y)} \\
    &= -\grad (y+h) \del{ \frac{\cQ(y+h) - \cQ(y)}{\cQ(y+h) \cQ(y)} } + \frac{\grad y \grad y^T }{\cQ(y)^2} \frac{\grad h}{\cQ(y)} \\
    &= -\frac{\grad (y+h)}{\cQ(y+h) \cQ(y)} \left( \cR_1 + D_y\cQ(y)\pair{h} \right) + \frac{\grad y \grad y^T }{\cQ(y)^2} \frac{\grad h}{\cQ(y)} \\
    &= \cR_1 O(1) - \frac{\grad (y+h)}{\cQ(y+h) \cQ(y)} \left( \frac{\grad y \cdot \grad h}{\cQ(y)} \right) + \frac{\grad y \grad y^T }{\cQ(y)^2} \frac{\grad h}{\cQ(y)}.
\end{split}
\end{equation*}
Continuing further, we obtain
\begin{equation*}
\begin{split}
  \widetilde{\cR}_2 &= \cR_1 O(1) + O(|\grad h|^2) - \frac{1}{\cQ^2(y)} \left( \grad y \left( \frac{\grad y \cdot \grad h}{\cQ(y+h)} \right) - \grad y \grad y^T \frac{\grad h}{\cQ(y)} \right) \\
  &= \cR_1 O(1) + O(|\grad h|^2) - \frac{\grad y \grad y^T}{\cQ^2(y)} \left( \frac{1}{\cQ(y+h)} - \frac{1}{\cQ(y)} \right) \grad h,
\end{split}
\end{equation*}
and computing the $L^\infty$ norm then yields $\norm{\widetilde{\cR}_2}_{L^\infty(\Omega)} \leq O( \normS{h}1\infty\Omega^2)$, because
\begin{equation}\label{eqn:inverse_Q_Lipschitz}
  \norm{\frac{1}{\cQ(y)}-\frac{1}{\cQ(z)}}_{L^\infty(\Omega)}
     = \norm{\frac{\cQ(z) - \cQ(y)}{\cQ(y) \cQ(z)}}_{L^\infty(\Omega)}
     \le \normS{y-z}1\infty\Omega.
\end{equation}
Combining with \eqref{eqn:Frechet_N_W1inf_pf_1}, we see that $\normS{\cR_2}{-1}\infty\Omega \leq O( \normS{h}1\infty\Omega^2)$ and a standard $\epsilon$-$\delta$ argument proves the Fr\'echet differentiability of $\cN$.  Note that the constants appearing in the above estimates are very mild (most are bounded by 1).

To conclude the proof we need to show the Lipschitz property for $D_y \cN$. Consider a fixed but arbitrary direction $h$, and let $y,z \in v \oplus \sobZ1\infty\Omega$ with $y \neq z$, then
\begin{align*}
&\pair{D_y\cN(y,u) \pair{h} - D_y\cN(z,u) \pair{h},w}_{\sob{-1}\infty\Omega,\sobZ11\Omega}\\
\quad\quad
    &=  \pair{ \del{ \frac{1}{Q(y)}
          - \frac{1}{Q(z)} } \grad h , \grad w}_{L^\infty(\Omega),L^1(\Omega)} - \pair{ \del{ \frac{\grad y \grad y^T}{Q(y)^3} - \frac{\grad z \grad z^T}{Q(z)^3} } \grad h , \grad w}_{L^\infty(\Omega),L^1(\Omega)} \\
    &= I_1 - I_2,
\end{align*}
where $I_1$ is clearly Lipschitz continuous.  Continuing, we have
\begin{align*}
 I_2 &= \pair{\del{\frac{\grad y \del{\grad (y - z)}^T}{Q(y)^3} +
       \frac{ \grad y \del{Q(z)^3 - Q(y)^3} + \del{\grad(y-z)} Q(y)^3}{Q(y)^3Q(z)^3}
       \grad z^T} \grad h , \grad w}_{L^\infty(\Omega),L^1(\Omega)} ,
\end{align*}
and using $a^3 - b^3 = (a-b)(a^2 + a b + b^2)$ and \eqref{eq:Q_Lip} we obtain
\[
    \sup_{h \in \sobZ1\infty\Omega}\frac{\normS{D_y\cN(y,u) \pair{h} - D_y\cN(z,u) \pair{h}}{-1}\infty\Omega}{\normS{h}1\infty\Omega} \le 2 \normS{y-z}1\infty\Omega ,
\]
which completes the proof.
The same argument can be applied to show the twice Fr\'echet differentiability with respect to $y$ with Lipschitz second order derivative (the details are omitted for brevity).
\end{proof}

\subsection{$\sob2p\Omega$-Strong Solution}
We remark that for $p > n$, $\sobZ11\Omega \subset\subset L^{p'}(\Omega)$, consequently $L^p(\Omega) \subset\subset \sob{-1}\infty\Omega$. Recalling that $p > n$ (for a fixed $p$), throughout this section we assume that $v \in \sob2p\Omega$.  We introduce the following space
\[
    Y := \del{v \oplus \sobZ1\infty\Omega}\cap\sob2p\Omega ,
\]
so $y \in Y$ means $y-v \in \sobZ1\infty\Omega \cap \sob2p\Omega$.

\begin{lemma}
\label{lem:Frechet_N_W2p}
Let $U_2 \subset U_1 \cap L^p(\Omega)$ be open, then for every $u \in U_2$ and $v \in \sob2p\Omega$, the operator $\cN(\cdot,u) : Y \rightarrow L^p(\Omega)$ is Fr\'echet differentiable and the Fr\'echet derivative is Lipschitz continuous and is given by
\[
    D_y\cN\del{y,u}\pair{h} = -\divg{ \del{ \del{ \cI - \frac{\grad y \grad y^T }{\cQ(y)^2} } \frac{\grad h}{\cQ(y)}}}.
\]
Moreover $\cN$ is twice Fr\'echet differentiable with Lipschitz second order Fr\'echet derivative.
\end{lemma}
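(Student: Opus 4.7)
The plan is to adapt the computations from \lemref{lem:Frechet_N_W1inf} to the $L^p$ setting by exploiting the extra $\sob2p\Omega$ regularity of elements of $Y$. First I would check that $\cN$ actually maps $Y$ into $\Lp\Omega$: expanding the divergence yields the non-divergence form
\[
\cN(y,u) = -\frac{\lapl y}{\cQ(y)} + \frac{\grad y^T \del{D^2 y}\grad y}{\cQ(y)^3} - u,
\]
and since $\grad y \in \Linf\Omega$ makes $\cQ(y)^{-1}$ and its products with $\grad y$ uniformly bounded, while $D^2 y \in \Lp\Omega$ and $u \in \Lp\Omega$, we have $\cN(y,u) \in \Lp\Omega$. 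The same expansion verifies that the proposed derivative $-\divg{\del{F'(\grad y)\grad h}}$ (writing $F(\xi) := \xi/\cQ(\xi)$, so that the matrix coefficient $F'(\grad y) = \del{\cI - \grad y \grad y^T/\cQ(y)^2}/\cQ(y)$) also lies in $\Lp\Omega$ for every $h$ in the Banach space $\sobZ1\infty\Omega \cap \sob2p\Omega$ equipped with the natural sum norm.

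To establish Fréchet differentiability I would reuse the residual identity from \lemref{lem:Frechet_N_W1inf}, namely
\[
\cR_2 := \cN(y+h,u) - \cN(y,u) - D_y\cN(y,u)\pair{h} = -\divg{\widetilde{\cR}_2},
\]
where $\widetilde{\cR}_2 = F(\grad(y+h)) - F(\grad y) - F'(\grad y)\grad h$ is exactly the quantity already bounded pointwise in \lemref{lem:Frechet_N_W1inf}. Distributing $\divg$ via the product rule and regrouping yields two types of contributions: terms of the form $[F'(\grad(y+h)) - F'(\grad y)]:D^2 h$, bounded pointwise by $C\abs{\grad h}\abs{D^2 h}$; and terms of the form $[F'(\grad(y+h)) - F'(\grad y) - F''(\grad y)\grad h]:D^2 y$, bounded pointwise by $C\abs{\grad h}^2\abs{D^2 y}$ after a second Taylor expansion of $F'$. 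Hölder's inequality then gives
\[
\norm{\cR_2}_{\Lp\Omega} \le C\,\normS{h}1\infty\Omega\normS{h}2p\Omega + C\,\normS{y}2p\Omega\normS{h}1\infty\Omega^2,
\]
which is quadratic in the $\sobZ1\infty\Omega\cap\sob2p\Omega$ norm of $h$, completing Fréchet differentiability via a standard $\epsilon$-$\delta$ argument.

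The Lipschitz continuity of $D_y\cN$ follows from the identity
\[
D_y\cN(y,u)\pair{h} - D_y\cN(z,u)\pair{h} = -\divg{\del{[F'(\grad y) - F'(\grad z)]\grad h}},
\]
expanding the divergence and invoking the smoothness of $F'$ together with \eref{eq:Q_Lip} and \eref{eqn:inverse_Q_Lipschitz} to decompose the result into contributions of order $\normS{y-z}1\infty\Omega\normS{h}2p\Omega$, $\normS{y-z}2p\Omega\normS{h}1\infty\Omega$, and $\normS{y-z}1\infty\Omega\normS{h}1\infty\Omega$. The main technical obstacle throughout is a careful bookkeeping of all the differentiation products arising in $\divg{\widetilde{\cR}_2}$ and in the corresponding Lipschitz expansion; what makes every estimate uniform is the structural fact that $F$ and all of its derivatives are smooth, globally bounded maps on $\real^n$, so every coefficient appearing in the expansions automatically lies in $\Linf\Omega$. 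The twice Fréchet differentiability of $\cN$ and the Lipschitz continuity of $D_y^2\cN$ are obtained by one further Taylor expansion of $F'$ combined with the same Hölder arguments, in exact parallel with the corresponding steps of \lemref{lem:Frechet_N_W1inf}.
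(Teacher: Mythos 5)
Your proposal is correct, and it fills in exactly the computation the paper compresses into one line; the only real difference is the technical vehicle. The paper's proof simply says that for $p>n$ the space $\sob{1}{p}{\Omega}$ is a Banach algebra and that the manipulations of \lemref{lem:Frechet_N_W1inf} then go through unchanged: one estimates the flux-level residual $\widetilde{\cR}_2$ in the $\sob{1}{p}{\Omega}$ norm by recycling the algebraic identities already derived, and taking the divergence lands in $\Lp{\Omega}$. You instead expand the divergence into non-divergence form, writing $F(\xi)=\xi/(1+\abs{\xi}^2)^{1/2}$, and exploit that $F$ and all its derivatives are globally bounded, so pointwise Taylor bounds plus an $L^\infty$--$L^p$ H\"older split give $\norm{\cR_2}_{\Lp{\Omega}}\le C\normS{h}{1}{\infty}{\Omega}\normS{h}{2}{p}{\Omega}+C\normS{y}{2}{p}{\Omega}\normS{h}{1}{\infty}{\Omega}^2$; this is quadratic in the norm of $\sobZ{1}{\infty}{\Omega}\cap\sob{2}{p}{\Omega}$, as needed. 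The algebra route is shorter because it reuses \lemref{lem:Frechet_N_W1inf} verbatim; your route makes the bookkeeping transparent, in particular which terms carry $D^2h$ versus $D^2y$. One precision you should add: in your Lipschitz decomposition of $D_y\cN(y,u)-D_y\cN(z,u)$, the contribution you record as order $\normS{y-z}{1}{\infty}{\Omega}\normS{h}{1}{\infty}{\Omega}$ arises from $[F''(\grad y)-F''(\grad z)]$ contracted with $D^2y\otimes\grad h$ and therefore carries a factor $\normS{y}{2}{p}{\Omega}$ (or $\normS{z}{2}{p}{\Omega}$), so the Lipschitz constant is uniform only on bounded subsets of $Y$ (such as the ball $\mathbb{B}$ used later); this local Lipschitz property is all the paper needs and is equally implicit in the Banach-algebra argument, so it is a matter of stating the dependence, not a gap.
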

\begin{proof}
For $p>n$, $\sob1p\Omega$ is a Banach algebra. Using this fact the proof is the same as in \lemref{lem:Frechet_N_W1inf}.
\end{proof}
\begin{remark}\label{rem:U_set_nonempty}
We recall that $L^p(\Omega) \subset\subset \sob{-1}\infty\Omega$.  Since $0 \in U_1$, we have that $U_1 \cap L^p(\Omega)$ in \lemref{lem:Frechet_N_W2p} is not empty.  So we can set $U_2 = U_1 \cap L^p(\Omega) \neq \emptyset$.
\end{remark}

Next we will state the existence and uniqueness of $y \in Y$ satisfying \eqref{eq:state}. Remarkably enough, we not only get the improved regularity for $y$ but also the Fr\'echet differentiability of the control to state map (compare with \cite[Section~1.4.2]{MHinze_RPinnau_MUlbrich_SUlbrich_2009a}).  First, we recall the implicit function theorem from \cite[2.7.2]{LNirenberg_1974a}.
\begin{theorem}[implicit function theorem]\label{thm:implicit_func_thm}
Let $X, Y$, and $Z$ be Banach spaces and $f$ a continuous mapping of an open set
$U \subset X \times Y \rightarrow Z$. Assume that $f$ has a Fr\'echet derivative with
respect to $x$, $D_x f(x,y)$, which is continuous in $U$. Let $(x_0,y_0) \in U$ and
$f(x_0,y_0) = 0$. If $D_x f(x_0,y_0)$ is an isomorphism of $X$ onto $Z$ then:
\begin{enumerate}[(i)]
\item There is a ball $B_r(y_0) := \set{y : \norm{y-y_0} < r} \subset Y$ and a unique continuous
      map $g : B_r(y_0) \rightarrow X$ such that $g(y_0) = x_0$ and $f(g(y),y) = 0$, for all $y$ in $B_r(y_0)$.
\item If $f$ is of class $C^1$, then $g(y)$ is of class $C^1$ and
\[
    D_y g(y) = - [D_x f(g(y),y)]^{-1} \circ D_y f(g(y),y) .
\]
\item $D_y g(y)$ belongs to $C^p$ if $f$ is in $C^p$, for $p > 1$.
\end{enumerate}
\end{theorem}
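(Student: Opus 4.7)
This is the classical Banach-space implicit function theorem, and my plan is to sketch the standard contraction-mapping proof before invoking \cite[2.7.2]{LNirenberg_1974a} for the details. The key idea is to normalize by the isomorphism at the base point and reduce the problem to a uniform Banach fixed-point problem in $x$, parametrized by $y$.

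First I would set $L := D_x f(x_0,y_0) \in \mathcal{L}(X,Z)$, which is an isomorphism by hypothesis, and recast $f(x,y) = 0$ as the fixed-point equation $x = T(x,y)$ with $T(x,y) := x - L^{-1} f(x,y)$. The point of this reformulation is that $D_x T(x_0,y_0) = I - L^{-1}L = 0$, so by continuity of $D_x f$ on $U$ I can pick $\rho > 0$, and then $r > 0$, such that
\[
  \sup_{(x,y)\in \bar B_\rho(x_0)\times \bar B_r(y_0)} \|D_x T(x,y)\|_{\mathcal{L}(X,X)} \le \tfrac{1}{2}
\]
and $\bar B_\rho(x_0) \times \bar B_r(y_0) \subset U$. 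After shrinking $r$ once more to guarantee $\|L^{-1} f(x_0,y)\|_X \le \rho/2$ on $\bar B_r(y_0)$, the map $T(\cdot,y)$ sends $\bar B_\rho(x_0)$ into itself and is a $\tfrac{1}{2}$-contraction for every $y \in \bar B_r(y_0)$. Banach's fixed-point theorem then produces a unique $g(y) \in \bar B_\rho(x_0)$ with $f(g(y),y) = 0$, proving (i). Continuity of $g$ follows from the uniform Lipschitz bound
\[
  \|g(y_1)-g(y_2)\|_X \le 2\,\|T(g(y_2),y_1) - T(g(y_2),y_2)\|_X,
\]
combined with continuity of $T$ in the $y$-variable at $(g(y_2),y_2)$.

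For (ii), since the set of isomorphisms is open in $\mathcal{L}(X,Z)$, after possibly shrinking $r$ once more $D_x f(g(y),y)$ remains an isomorphism for every $y \in B_r(y_0)$. Writing out the first-order Taylor expansion of $f$ at $(g(y),y)$, subtracting $f(g(y+k),y+k) = f(g(y),y) = 0$, and using the Lipschitz bound for $g$ that falls out of the contraction estimate, I obtain both the Fr\'echet differentiability of $g$ and the formula $D_y g(y) = -[D_x f(g(y),y)]^{-1} \circ D_y f(g(y),y)$. Part (iii) is then proved by induction on $p$: this formula expresses $D_y g$ as a composition of $g$, the partials of $f$, and the $C^\infty$ inversion map $A \mapsto A^{-1}$ on the open set of isomorphisms, so the chain rule transfers $C^p$ regularity from $f$ to $g$.

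The only delicate point is the order in which the two radii are chosen: $\rho$ must be fixed first, so that the contraction estimate can be arranged, and only then can $r$ be shrunk (multiple times, if necessary) to enforce invariance of $\bar B_\rho(x_0)$ under $T(\cdot,y)$ and invertibility of $D_x f(g(y),y)$. Once this sequencing is correctly set up, the rest of the argument is routine bookkeeping with Banach's fixed-point theorem, the chain rule, and the smoothness of the inversion map on Banach-space isomorphisms.
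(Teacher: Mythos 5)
The paper does not prove this statement at all --- it is recalled verbatim from Nirenberg \cite[2.7.2]{LNirenberg_1974a} as a cited tool --- so there is no internal proof to compare against. Your contraction-mapping sketch is the standard, correct argument (and essentially the one in the cited source): the normalization $T(x,y) = x - L^{-1}f(x,y)$, the ordering of the radii ($\rho$ first, then $r$), the estimate $\|g(y_1)-g(y_2)\|_X \le 2\|T(g(y_2),y_1)-T(g(y_2),y_2)\|_X$ for continuity, and the openness-of-isomorphisms plus chain-rule/induction step for (ii)--(iii) are all sound.
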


\begin{theorem}[$\sobZ1\infty\Omega \cap \sob2p\Omega$ state]
\label{thm:state_strong}
Let $\Omega$ be $C^{1,1}$ and $v \in \sob2p\Omega$. There exists an open set $U_3 \subset U_2$ such that $0 \in U_3$ and for all $u \in U_3$, there exists a unique solution map $\cS : U_3 \rightarrow Y$ such that
\[
    \cN\del{\cS(u), u} = 0, \quad \mbox{for all } u \in U_3 .
\]
Furthermore, $\cS$ is twice continuously Fr\'echet differentiable as a function of $u$ with first order derivative at $u \in U_3$ given by
\[
    D_u \cS(u) = - \sbr{D_y\cN\del{y,u}}^{-1} \circ D_u \cN\del{y,u} .
\]
\end{theorem}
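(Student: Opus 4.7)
The plan is to verify the hypotheses of the implicit function theorem (\thmref{thm:implicit_func_thm}) for the affine variant $f(z,u) := \cN(v+z, u)$, taking the roles of $(X,Y,Z)$ in \thmref{thm:implicit_func_thm} to be $(\sobZ{1}{\infty}{\Omega} \cap \sob{2}{p}{\Omega},\, L^p(\Omega),\, L^p(\Omega))$, at a reference pair $(z_0, u_0) = (y_0 - v, 0)$ to be constructed below. The $C^2$-regularity required by \thmref{thm:implicit_func_thm}(iii) is furnished directly by \lemref{lem:Frechet_N_W2p}, so the only missing ingredients are (a) a reference state $y_0 \in Y$ with $\cN(y_0, 0) = 0$ and (b) the linear isomorphism property of $D_y\cN(y_0, 0) : \sobZ{1}{\infty}{\Omega} \cap \sob{2}{p}{\Omega} \to L^p(\Omega)$.

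For (a) I would first apply \thmref{thm:state_W11} with $u = 0 \in U_1$ to obtain $y_0 - v \in \sobZ{1}{1}{\Omega}$ solving the minimal-surface equation $-\divg{(\grad y_0 / \cQ(y_0))} = 0$ weakly, and then upgrade the regularity. Using that $\partial\Omega$ is $C^{1,1}$ and $v \in \sob{2}{p}{\Omega}$, I would first produce a global bound $\grad y_0 \in L^\infty(\Omega)$ from interior and boundary $C^{1,\alpha}$ estimates for minimal graphs, then freeze the coefficient matrix $A(y_0) := \cQ(y_0)^{-1}\del{\cI - \grad y_0 \grad y_0^T / \cQ(y_0)^2}$, and apply Agmon--Douglis--Nirenberg $\sob{2}{p}{\Omega}$-regularity to the resulting linear divergence-form equation $-\divg{(A(y_0)\grad y_0)} = 0$ with $\sob{2}{p}{\Omega}$ boundary trace to conclude $y_0 \in \sob{2}{p}{\Omega}$. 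I expect this bootstrap to be the main obstacle: the operator is only locally coercive and no smallness is imposed on $v$, so the global Lipschitz bound on $y_0$ must be secured before the linearisation is uniformly elliptic.

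For (b), the derivative computed in \lemref{lem:Frechet_N_W2p} takes the divergence form $h \mapsto -\divg{(A(y_0)\grad h)}$, where $A(y_0)$ is symmetric with pointwise eigenvalues in $[\cQ(y_0)^{-3}, \cQ(y_0)^{-1}]$ and continuous on $\overline{\Omega}$ since $\sob{2}{p}{\Omega} \hookrightarrow C^{1,\alpha}(\overline{\Omega})$ for $p > n$. Standard $\sob{2}{p}{\Omega}$-theory for uniformly elliptic divergence-form operators on $C^{1,1}$ domains then gives an isomorphism from $\sobZ{1}{p}{\Omega} \cap \sob{2}{p}{\Omega}$ onto $L^p(\Omega)$, which I would identify with an isomorphism from $\sobZ{1}{\infty}{\Omega} \cap \sob{2}{p}{\Omega}$ onto $L^p(\Omega)$ via the Sobolev embedding $\sob{2}{p}{\Omega} \hookrightarrow \sob{1}{\infty}{\Omega}$ valid for $p > n$.

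With (a) and (b) in place, \thmref{thm:implicit_func_thm}(i) produces a ball $B_r(0) \subset L^p(\Omega)$ together with a continuous implicit map $g$; setting $U_3 := B_r(0) \cap U_2$ and $\cS(u) := v + g(u)$ delivers the unique continuous solution map $\cS : U_3 \to Y$ satisfying $\cN(\cS(u), u) = 0$. Parts (ii) and (iii) of \thmref{thm:implicit_func_thm} then promote $\cS$ to $C^2$ and yield the stated formula $D_u\cS(u) = -[D_y\cN(y, u)]^{-1}\circ D_u\cN(y, u)$.
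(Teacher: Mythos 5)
Your overall strategy (verifying the hypotheses of \thmref{thm:implicit_func_thm} with the differentiability supplied by \lemref{lem:Frechet_N_W2p}) is the same as the paper's, but your choice of base point introduces a genuine gap. The paper linearizes at the trivial pair $(y_0,u_0)=(0,0)$: there $\cN(0,0)=0$ holds identically, and $D_y\cN(0,0)\pair{h}=-\Delta h$, which is an isomorphism onto $L^p(\Omega)$ on a $C^{1,1}$ domain by \cite[Theorem~9.15]{DGilbarg_NTrudinger_2001a}; no auxiliary nonlinear existence or regularity result is needed, which is precisely how the theorem escapes any smallness or geometric condition beyond $\Omega\in C^{1,1}$. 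You instead anchor the implicit function theorem at a solution $y_0$ of the minimal surface equation ($u=0$) with boundary trace $v$, and propose to obtain it from \thmref{thm:state_W11} and then bootstrap to $\grad y_0\in L^\infty(\Omega)$ and $y_0\in\sob2p\Omega$ via ``interior and boundary $C^{1,\alpha}$ estimates for minimal graphs.'' This is exactly where the argument breaks down: for the minimal surface operator the \emph{boundary} gradient estimate is not available on an arbitrary $C^{1,1}$ domain. By the classical Jenkins--Serrin/Serrin theory, the Dirichlet problem for the minimal surface equation is solvable for all (smooth) boundary data only if $\bdy\Omega$ has nonnegative mean curvature; otherwise there exist data $v$ for which no solution attains the boundary values, the variational ($W^1_1$/BV-type) solution behind \thmref{thm:state_W11} need not take the trace $v$, and its gradient can blow up at $\bdy\Omega$. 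Since the statement allows arbitrary $v\in\sob2p\Omega$ and imposes no mean-convexity and no smallness, your step (a) cannot be completed with the tools available in the paper, and without it the uniform ellipticity of $A(y_0)$ and the isomorphism in your step (b) have no object to apply to.

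Two smaller points. First, even granting a Lipschitz base state $y_0\in\sob2p\Omega$, your step (b) needs $\sob2p\Omega$ solvability for the divergence-form operator $-\divg{\del{A(y_0)\grad h}}$, whose expansion carries the drift $\divg{\del{A(y_0)}}\cdot\grad h$ with coefficient only in $L^p(\Omega)$; this requires an argument of the type \lemref{lem:regularity_non_div} (which the paper proves for the adjoint equation) rather than the bounded-coefficient theory you invoke. Second, note that with the paper's base point the isomorphism to be checked is literally the Laplacian, so the difficulty you correctly identify as ``the main obstacle'' is bypassed rather than solved; the price is only that the implicit function theorem then furnishes the solution map on a neighborhood $U_3$ of $u=0$, which is all the theorem claims.
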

\begin{proof}
To this end it is sufficient to confirm the hypothesis of \thmref{thm:implicit_func_thm}.
\begin{enumerate}
    \item In view of \lemref{lem:Frechet_N_W2p}, $\cN$ is continuously Fr\'echet differentiable with respect to $y$ on an open subset of $\sob2p\Omega$.

    \item At $(y_0,u_0) = (0,0)$, using \eqref{eq:state_weak} we get $\cN(y_0, u_0) = 0$.

    \item $D_y \cN(y_0, u_0) \pair{h} = -\Delta h$, which is a Banach space isomorphism from $\sob2p\Omega$ to $L^p(\Omega)$ for $\Omega$ of class $C^{1,1}$; see \cite[Theorem~9.15]{DGilbarg_NTrudinger_2001a}.
\end{enumerate}
Using the implicit function theorem, we conclude.
\end{proof}

\subsection{$W^2_p$-Continuity Estimate}
\thmref{thm:state_strong} provides existence and uniqueness of the $\sob2p\Omega$ solution to the state equation but not the continuity estimate for the solution variable. Later we see that the continuity estimate is a crucial piece of the puzzle. We develop a fixed point argument to show the existence and uniqueness in a ball where this a priori estimate holds. The proof requires the boundary data $v \in \sob2p\Omega$ to be small and $u$ to be in an open subset of $L^p(\Omega)$ (see Definition \ref{def:admissible_control}).  We remark that no smallness condition on $v$ was needed previously in \thmref{thm:state_strong}.


We begin by defining a solution set
\begin{equation}\label{eq:B}
    \mathbb{B} = \set{y \in Y \ : \
                      \normS{y}2p\Omega \le B_1},
\end{equation}
with $B_1 > 1$.
For a given $y \in \mathbb{B}$, define a map $T : Y \rightarrow Y$ such that $T(y) = \tilde{y}$ solves
\begin{align}\label{eq:state_linearized}
   - \del{\cQ(y)^2 I - \grad y \grad y^T} : \Dif^2 \tilde{y}
   &= u \cQ(y)^3   \quad \mbox{in } \Omega .
\end{align}
This is a linearization of the state equation \eqref{eq:state} obtained by expanding the left-hand-side of \eqref{eq:state} and evaluating the non-linear ``coefficient'' at $y \in \mathbb{B}$.  

\begin{lemma}\label{eq:positive_coeff}
    The coefficient matrix $\del{\cQ(y)^2 I - \grad y \grad y^T}$ in \eqref{eq:state_linearized} is uniformly positive definite.
\end{lemma}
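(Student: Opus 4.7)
The plan is a direct pointwise computation: for each $x \in \Omega$ and each $\xi \in \R^n$, evaluate the quadratic form
\[
    \xi^T \bigl( \cQ(y)^2 I - \grad y \grad y^T \bigr) \xi = \cQ(y)^2 \, |\xi|^2 - (\grad y \cdot \xi)^2.
\]
I would then substitute the definition $\cQ(y)^2 = 1 + |\grad y|^2$ to split the right-hand side into $|\xi|^2 + |\grad y|^2 |\xi|^2 - (\grad y \cdot \xi)^2$, and apply the Cauchy--Schwarz inequality $(\grad y \cdot \xi)^2 \le |\grad y|^2 |\xi|^2$ to cancel the last two terms. This produces the uniform lower bound
\[
    \xi^T \bigl( \cQ(y)^2 I - \grad y \grad y^T \bigr) \xi \ge |\xi|^2,
\]
which holds pointwise a.e.\ in $\Omega$, for every $\xi \in \R^n$, and crucially with a constant independent of $y$ (in fact equal to $1$).

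Since the bound does not depend on $\grad y$ at all, the uniform positive definiteness claimed in the lemma follows immediately, and there is really no obstacle; the only subtlety worth noting is that the lower bound is genuinely dimension-free and $y$-free, so the conclusion is the strongest possible form of uniform ellipticity one could hope for here. I would keep the proof to a few lines, just displaying the identity and the Cauchy--Schwarz step, and conclude that the ellipticity constant in \eqref{eq:state_linearized} can be taken to be $1$.
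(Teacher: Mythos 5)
Your proof is correct and follows essentially the same argument as the paper: expand the quadratic form using $\cQ(y)^2 = 1 + |\grad y|^2$ and apply Cauchy--Schwarz to obtain the pointwise lower bound $|\xi|^2$, giving ellipticity constant $1$. No differences worth noting.
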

\begin{proof}
Let ${\bf b} \in \real^n$ be an arbitrary nonzero column vector with components $b_1, \dots, b_n$ and set $E = \cQ(y)^2 I - \grad y \grad y^T$.  Then, using the definition of $\cQ$, we obtain
\begin{align*}
    {\bf b}^T E {\bf b}
   &= {\bf b}^T {\bf b} + \del{ \grad y^T \grad y} \del{{\bf b}^T {\bf b}} - {\bf b}^T \grad y \grad y^T {\bf b} \\
   &= {\bf b}^T {\bf b} + \del{\grad y^T\grad y} \del{{\bf b}^T {\bf b}} - \del{\grad y^T {\bf b}}^T
                    \del{\grad y^T {\bf b}}  \\
   &= {\bf b}^T {\bf b} + \del{\sum_{i=1}^n \abs{\partial_i y}^2} \del{\sum_{j=1}^n \abs{b_j}^2}
     - \abs{\sum_{i=1}^n\partial_i y b_i}^2 \ge {\bf b}^T {\bf b} > 0,
\end{align*}
where we used the Cauchy-Schwarz inequality.
\end{proof}
\begin{lemma}[range of $T$]
There exist constants $C_\Omega > 0$, and $B_2(n,p,B_1,\Omega) > 0$, such that if $v \in \sob2p\Omega$ and $u \in L^p(\Omega)$ satisfy
\begin{equation}\label{eq:cond_T_map_B_to_B}
    C_\Omega \del{\normS{v}2p\Omega + B_2 \norm{u}_{L^p(\Omega)} } \le B_1,
\end{equation}
then $T$ maps $\mathbb{B}$ to $\mathbb{B}$.
\end{lemma}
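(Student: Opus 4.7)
The plan is to set $\tilde y = T(y)$, split off the boundary data by writing $\tilde y = v + w$ with $w \in \sobZ{1}{p}{\Omega} \cap \sob2p\Omega$ and $w=0$ on $\bdy\Omega$, and obtain a $\sob2p\Omega$-bound for $w$ from the non-divergence form linear elliptic theory of Gilbarg-Trudinger. The key point is that because $y \in \mathbb{B}$ and $p>n$, the Sobolev embedding $\sob2p\Omega \hookrightarrow C^1(\bar\Omega)$ yields $\normLi{\grad y}\Omega \le C_{\text{emb}} B_1$, so the coefficient matrix $A(y) := \cQ(y)^2 I - \grad y\,\grad y^T$ is continuous on $\bar\Omega$, uniformly positive definite (by the preceding lemma), and uniformly bounded, all with bounds depending only on $n,p,\Omega,B_1$. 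In particular the modulus of continuity of $A(y)$ is controlled by the H\"older embedding $\sob2p\Omega \hookrightarrow C^{1,1-n/p}(\bar\Omega)$.

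Substituting $\tilde y = v + w$ into \eqref{eq:state_linearized}, the function $w$ satisfies the linear non-divergence form problem
\[
   -A(y):\Dif^2 w = u\,\cQ(y)^3 + A(y):\Dif^2 v =: f \quad \text{in } \Omega, \qquad w = 0 \text{ on } \bdy\Omega.
\]
Since $\Omega$ is $C^{1,1}$, the Calder\'on--Zygmund / Agmon--Douglis--Nirenberg estimate (Gilbarg-Trudinger Theorem 9.15, precisely the tool already invoked in the proof of \thmref{thm:state_strong}) applies to this equation and yields
\[
   \normS{w}2p\Omega \le C \normLt{f}\Omega^{*} \quad\text{(with $\Lp{\Omega}$ norm on the right)},
\]
where $C = C(n,p,\Omega,B_1)$ depends on $B_1$ only through the ellipticity constants and the modulus of continuity of $A(y)$. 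The right-hand side is bounded in $\Lp{\Omega}$ by
\[
   \norm{f}_{\Lp{\Omega}} \le \normLi{\cQ(y)}\Omega^{3}\,\norm{u}_{\Lp{\Omega}} + \normLi{A(y)}{\Omega}\,\normS{v}2p\Omega,
\]
and both $\normLi{\cQ(y)}\Omega$ and $\normLi{A(y)}\Omega$ are controlled by $1 + (C_{\text{emb}}B_1)^{2}$ and its powers.

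Putting these pieces together and using the triangle inequality $\normS{\tilde y}2p\Omega \le \normS{w}2p\Omega + \normS{v}2p\Omega$, one obtains
\[
   \normS{\tilde y}2p\Omega \le C_\Omega\bigl(\normS{v}2p\Omega + B_2\,\norm{u}_{\Lp{\Omega}}\bigr),
\]
with $C_\Omega$ absorbing the constant from Theorem 9.15 (and the factor $\normLi{A(y)}\Omega + 1$) and $B_2 = B_2(n,p,B_1,\Omega)$ absorbing $\normLi{\cQ(y)}\Omega^{3}$. Under the hypothesis \eqref{eq:cond_T_map_B_to_B} the right-hand side is $\le B_1$, giving $\tilde y \in \mathbb{B}$ and proving $T(\mathbb{B}) \subset \mathbb{B}$.

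The main obstacle is justifying the use of the non-divergence elliptic $\sob2p\Omega$-estimate with coefficients that are only in $\sob1p\Omega \subset C^{0,1-n/p}(\bar\Omega)$; one must verify that the constant from Gilbarg-Trudinger depends in a controlled way on $B_1$ (through ellipticity bounds and the modulus of continuity of $A(y)$), not on higher-order norms of $y$. Everything else is triangle inequality and Sobolev embedding, but keeping the structure of the bound in the exact form $\normS{v}2p\Omega + B_2\norm{u}_{\Lp{\Omega}}$ requires that $B_2$ be independent of $v$, which is why $\normS{v}2p\Omega$ is kept as a separate additive term rather than merged into the constant in front of $\norm{u}_{\Lp{\Omega}}$.
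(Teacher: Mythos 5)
Your proposal is correct and follows essentially the same route as the paper: apply the Gilbarg--Trudinger non-divergence $W^2_p$ theory (Theorem 9.15 / Lemma 9.17) using the uniform ellipticity from the preceding lemma, bound the right-hand side $u\,\cQ(y)^3$ in $L^p(\Omega)$ via the embedding $\sob2p\Omega \hookrightarrow \sob1\infty\Omega$ so that the factor $\norm{\cQ(y)^3}_{L^\infty(\Omega)}$ becomes the constant $B_2(n,p,B_1,\Omega)$, and conclude from \eqref{eq:cond_T_map_B_to_B}. Your explicit reduction $\tilde y = v + w$ and your remark that the coefficient matrix has modulus of continuity controlled uniformly over $\mathbb{B}$ through the $C^{1,1-n/p}(\clos{\Omega})$ embedding are just the details hidden in the paper's citation of Lemma 9.17, so this counts as the same argument, carried out slightly more carefully.
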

\begin{proof}
For a given $y \in \mathbb{B}$, $\cQ(y) \in \sob1\infty\Omega$, whence the right hand side in \eqref{eq:state_linearized} belongs to $L^p(\Omega)$. In view of \cite[Theorem~9.15]{DGilbarg_NTrudinger_2001a} in conjunction with \lemref{eq:positive_coeff}, there exists a unique $\tilde{y}$ solving \eqref{eq:state_linearized}. Moreover \cite[Lemma~9.17]{DGilbarg_NTrudinger_2001a} implies there exists a constant $C_\Omega$ such that $\tilde{y}$ satisfies the a priori estimate:
%
\begin{equation*}
    \normS{\tilde{y}}2p\Omega
   \le C_\Omega \del{\normS{v}2p\Omega + \norm{u}_{L^p(\Omega)} \norm{\cQ(y)^3}_{L^\infty(\Omega)} } .
\end{equation*}
Since $y \in \mathbb{B}$ and $\sob2p\Omega \subset\subset \sob1\infty\Omega$ for $p>n$ with embedding constant $C_S$ we deduce
\begin{equation}\label{eq:continuity_estimate_ytl}
    \normS{\tilde{y}}2p\Omega
   \le C_\Omega \del{\normS{v}2p\Omega + B_2 \norm{u}_{L^p(\Omega)} }
\end{equation}
where the constant $B_2$ depends on $B_1$, $p$, $n$ and the embedding constant $C_S$. Choosing $\normS{v}2p\Omega$ and $\norm{u}_{L^p(\Omega)}$ such that \eqref{eq:cond_T_map_B_to_B} hold, we conclude that $T$ maps $\mathbb{B}$ to $\mathbb{B}$.
\end{proof}
\begin{theorem}[fixed point]\label{thm:fixed_point}
If, in addition to \eqref{eq:cond_T_map_B_to_B}, $u \in L^p(\Omega)$ and $v \in \sob2p\Omega$ further satisfy
\begin{equation}\label{eq:cond_T_contraction}
    B_3 \del{2\norm{u}_{L^p(\Omega)}+\normS{v}2p\Omega} < 1,
\end{equation}
for some constant $B_3(n,p,B_1,B_2,\Omega) > 0$
then the map $T : \mathbb{B} \rightarrow \mathbb{B}$ is a contraction.
Moreover, the solution $y$ to the state equation satisfies
\begin{equation}\label{eq:estimate_y_2}
C_S \normS{y}2p\Omega \le \frac{1}{B_1 (1+ C_S^2 B_1^2)^{1/2}},
\end{equation}
\end{theorem}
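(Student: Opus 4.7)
The plan is to establish that $T$ is a strict contraction on the closed ball $\mathbb{B}\subset \sob{2}{p}{\Omega}$, apply Banach's fixed point theorem to obtain a unique fixed point $y\in\mathbb{B}$ with $T(y)=y$, and then feed this fixed-point identity back into the a priori estimate \eqref{eq:continuity_estimate_ytl} to extract the refined bound \eqref{eq:estimate_y_2}.

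For the contraction step, I would take $y_1,y_2\in \mathbb{B}$ and set $\tilde y_i := T(y_i)$. Since both $\tilde y_1$ and $\tilde y_2$ carry the boundary trace $v$, the difference $\tilde y_1 - \tilde y_2$ lies in $\sobZ{1}{p}{\Omega}\cap \sob{2}{p}{\Omega}$ and solves a linear Dirichlet problem of the form
\[
-\bigl(\cQ(y_1)^2 I - \grad y_1 \grad y_1^T\bigr):\Dif^2(\tilde y_1 - \tilde y_2) = u\bigl(\cQ(y_1)^3-\cQ(y_2)^3\bigr) + \mathbf{E}(y_1,y_2):\Dif^2 \tilde y_2,
\]
where $\mathbf{E}(y_1,y_2) := \bigl(\cQ(y_2)^2-\cQ(y_1)^2\bigr)I - \bigl(\grad y_2 \grad y_2^T - \grad y_1\grad y_1^T\bigr)$. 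By \lemref{eq:positive_coeff} the leading coefficient is uniformly positive definite, and since $p>n$ gives $\sob{2}{p}{\Omega} \hookrightarrow \sob{1}{\infty}{\Omega}$, its entries lie in $\sob{1}{\infty}{\Omega}$; hence \cite[Lemma~9.17]{DGilbarg_NTrudinger_2001a} bounds $\normS{\tilde y_1 - \tilde y_2}{2}{p}{\Omega}$ by $C_\Omega$ times the $L^p$-norm of the right-hand side.

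I would then estimate the two right-hand-side pieces in $L^p$. The factorisation $a^3-b^3=(a-b)(a^2+ab+b^2)$ combined with \eqref{eq:Q_Lip} and the embedding $\sob{2}{p}{\Omega} \hookrightarrow \sob{1}{\infty}{\Omega}$ (with constant $C_S$) controls $\norm{\cQ(y_1)^3-\cQ(y_2)^3}_{L^\infty(\Omega)}$ and each entry of $\mathbf{E}(y_1,y_2)$ by a constant depending on $B_1$ times $\normS{y_1-y_2}{2}{p}{\Omega}$. For the second term I would further invoke \eqref{eq:continuity_estimate_ytl} to replace $\normS{\tilde y_2}{2}{p}{\Omega}$ by $C_\Omega(\normS{v}{2}{p}{\Omega} + B_2\norm{u}_{L^p(\Omega)})$. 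Absorbing all constants into a single $B_3 = B_3(n,p,B_1,B_2,\Omega)$ produces
\[
\normS{\tilde y_1 - \tilde y_2}{2}{p}{\Omega} \le B_3\bigl(2\norm{u}_{L^p(\Omega)}+\normS{v}{2}{p}{\Omega}\bigr)\normS{y_1-y_2}{2}{p}{\Omega},
\]
so that \eqref{eq:cond_T_contraction} makes $T$ a strict contraction on the closed subset $\mathbb{B}$, and Banach's fixed point theorem supplies a unique $y\in\mathbb{B}$ with $T(y) = y$, which is the desired $\sob{2}{p}{\Omega}$ solution of \eqref{eq:state}.

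For \eqref{eq:estimate_y_2} I would apply \eqref{eq:continuity_estimate_ytl} directly to the identity $y=T(y)$, now bounding $\norm{\cQ(y)^3}_{L^\infty(\Omega)} \le (1+C_S^2\normS{y}{2}{p}{\Omega}^2)^{3/2}$ via $\sob{2}{p}{\Omega} \hookrightarrow \sob{1}{\infty}{\Omega}$. This yields a self-referential scalar inequality in $\normS{y}{2}{p}{\Omega}$ whose smallest positive root, under the standing smallness of $\norm{u}_{L^p(\Omega)}$ and $\normS{v}{2}{p}{\Omega}$, reproduces the stated bound. The main technical obstacle I expect is constant-bookkeeping: I must pick $B_1$, $B_2$, $B_3$ so that \eqref{eq:cond_T_map_B_to_B}, \eqref{eq:cond_T_contraction}, and the refined fixed-point bound \eqref{eq:estimate_y_2} are simultaneously attainable inside one common ball $\mathbb{B}$, while tracking the dependence of each constant on $B_1$ and on the Sobolev-embedding constant $C_S$.
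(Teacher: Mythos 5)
Your contraction argument is essentially the paper's: the same difference equation with the nonlinear coefficient frozen at one iterate, uniform ellipticity from Lemma~\ref{eq:positive_coeff}, the Gilbarg--Trudinger $W^2_p$ bound, the factorization $a^3-b^3=(a-b)(a^2+ab+b^2)$ together with \eqref{eq:Q_Lip} and the embedding $\sob2p\Omega\hookrightarrow\sob1\infty\Omega$, and \eqref{eq:continuity_estimate_ytl} to absorb the $\sob2p\Omega$-norm of the frozen solution; apart from an immaterial sign slip in your matrix $\mathbf{E}$ (it is the negative of the true coefficient difference), this half is sound and matches the paper.

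The genuine gap is your derivation of \eqref{eq:estimate_y_2}. Feeding $y=T(y)$ into the a priori estimate with $\norm{\cQ(y)^3}_{L^\infty(\Omega)}\le (1+C_S^2\normS{y}2p\Omega^2)^{3/2}$ gives the scalar relation $t \le C_\Omega\big(\normS{v}2p\Omega + \norm{u}_{L^p(\Omega)}(1+C_S^2t^2)^{3/2}\big)$ with $t=\normS{y}2p\Omega$; even granting that $y$ sits on the small branch, its smallest root is of size $C_\Omega\big(\normS{v}2p\Omega + B_2\norm{u}_{L^p(\Omega)}\big)$, which under \eqref{eq:cond_T_map_B_to_B} only gives $t\le B_1$, i.e.\ nothing beyond membership in $\mathbb{B}$. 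The hypotheses \eqref{eq:cond_T_map_B_to_B}--\eqref{eq:cond_T_contraction} by themselves contain no quantitative link between $B_3$ and the elliptic constant $C_\Omega$, so "constant bookkeeping" cannot produce the very specific bound $C_S\normS{y}2p\Omega\le \frac{1}{B_1(1+C_S^2B_1^2)^{1/2}}$. The paper obtains it by a different mechanism: it fixes $C_\Omega$ large enough that $C_\Omega^2(\normS{v}2p\Omega + B_2\norm{u}_{L^p(\Omega)}) > B_1(1+C_S^2B_1^2)^{1/2}$, observes that the term $\textsf{III}$ in the contraction estimate forces, as a necessary consequence of \eqref{eq:cond_T_contraction} (because $B_3$ dominates that coefficient), the inequality $C_\Omega^2 C_S\normS{y}2p\Omega\big(\normS{v}2p\Omega + B_2\norm{u}_{L^p(\Omega)}\big)<1$, and then combines the two displays to conclude \eqref{eq:estimate_y_2}. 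To close your argument you must either reproduce this coupling of $B_3$ to that particular choice of $C_\Omega$, or settle for the weaker bound $\normS{y}2p\Omega\le B_1$.
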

\begin{proof}
Take $y_1$, $y_2$ in $\mathbb{B}$, with $y_1 \neq y_2$, and let $\tilde{y}_i = T(y_i)$ (for $i=1,2$) solve the linearized system \eqref{eq:state_linearized}.  Define $\delta y := y_1 - y_2$ and $\delta \tilde{y} := \tilde{y}_1 - \tilde{y}_2$.  Computing the difference between the equations satisfied by $\tilde{y}_1$ and $\tilde{y}_2$ and after various algebraic manipulations we deduce
\begin{align*}
    - \del{\cQ(y_2)^2 I - \grad y_2 \grad y_2^T} : \Dif^2 \delta \tilde{y} &= u \del{\cQ(y_1)^3 - \cQ(y_2)^3} \\
   &\quad - \left\{ \cQ(y_2)^2 I - \cQ(y_1)^2 I + \grad \delta y \grad y_1^T + \grad y_2 \grad \delta y^T \right\} :\Dif^2 \tilde{y}_1 .
\end{align*}
Again using the Sobolev embedding theorem, and $p > n$, it is easy to check that the right-hand-side belongs to $L^p(\Omega)$. Toward this end, we invoke \cite[Theorem~9.15]{DGilbarg_NTrudinger_2001a} in conjunction with \lemref{eq:positive_coeff} and \cite[Lemma~9.17]{DGilbarg_NTrudinger_2001a}, and find there exists a constant $C_\Omega$ large enough satisfying $C_\Omega^2 > \frac{B_1 (1+ C_S^2 B_1^2)^{1/2}}{\normS{v}2p\Omega + B_2 \norm{u}_{L^p(\Omega)}}$, such that
\begin{align*}
    \normS{\delta\tilde{y}}2p\Omega
   &\le C_\Omega \bigg( \norm{u \del{\cQ(y_1)^3 - \cQ(y_2)^3}}_{L^p(\Omega)}
    + \norm{\del{ \cQ(y_2)^2 - \cQ(y_1)^2} \Delta \tilde{y}_1}_{L^p(\Omega)} \\
   &\quad + \norm{\grad \delta y \grad y_1^T : \Dif^2 \tilde{y}_1}_{L^p(\Omega)}
    + \norm{\grad y_2 \grad \delta y^T : \Dif^2 \tilde{y}_1 }_{L^p(\Omega)} \bigg) .
\end{align*}
We further deduce
\begin{align*}
     \normS{\delta\tilde{y}}2p\Omega
   &\le C_\Omega \bigg( \norm{u}_{L^p(\Omega)} \norm{\cQ(y_1)^3 - \cQ(y_2)^3}_{L^\infty(\Omega)} + \norm{\cQ(y_1)^2 - \cQ(y_2)^2}_{L^\infty(\Omega)} \normS{\tilde{y}_1}2p\Omega \nonumber \\
   &\qquad\quad
    + \normSZ{y_1}1\infty\Omega \normS{\tilde{y}_1}2p\Omega \normSZ{\delta y}1\infty\Omega + \normSZ{y_2}1\infty\Omega \normS{\tilde{y}_1}2p\Omega \normSZ{\delta y}1\infty\Omega
    \bigg) \nonumber \\
   &= \textsf{I} + \textsf{II} + \textsf{III} + \textsf{IV}.
\end{align*}
Regarding the terms $\textsf{III}$ and $\textsf{IV}$, it suffices to estimate $\textsf{III}$. For every $h \in \sob2p\Omega$ we have $\normSZ{h}1\infty\Omega \le C_S \normS{h}2p\Omega$, and $\tilde{y}_1$ satisfies \eqref{eq:continuity_estimate_ytl}, therefore
\[
\textsf{III} \le C_\Omega^2 C_S \normS{y_1}2p\Omega \del{\normS{v}2p\Omega + B_2 \norm{u}_{L^p(\Omega)}}  \normSZ{\delta y}1\infty\Omega .
\]
To estimate $\textsf{I}$ and $\textsf{II}$, we use the fact that $\cQ$ is Lipschitz continuous (see the proof of \lemref{lem:Frechet_N_W1inf}), $\tilde{y}$ satisfies \eqref{eq:continuity_estimate_ytl}, and $y_1,y_2 \in \mathbb{B}$, to obtain
%
\begin{align*}
     \normS{\delta\tilde{y}}2p\Omega
         \le B_3 \del{\norm{u}_{L^p(\Omega)}+\normS{v}2p\Omega} \normS{\delta y}2p\Omega,
\end{align*}
where the constant $B_3$ depends on $C_\Omega$, $B_1$, $p$, and $C_S$ where the latter is the embedding constant for $\sob2p\Omega$ in $\sob1\infty\Omega$ for $p>n$. Choosing $u$ and $v$ such that \eqref{eq:cond_T_contraction} holds, we get the desired contraction.

Clearly, a necessary condition (see \textsf{III}) for \eqref{eq:cond_T_contraction} to hold is
\[
C_\Omega^2 C_S \normS{y_1}2p\Omega \del{\normS{v}2p\Omega + B_2 \norm{u}_{L^p(\Omega)}}
< 1 .
\]
The choice of the constant $C_\Omega$ leads to \eqref{eq:estimate_y_2}.
\end{proof}
%


%
\begin{definition}[control sets $U$ and $U_{ad}$]
\label{def:admissible_control}
    Recall that $p > n$ is fixed.  We define an open set
    \[
        U := \set{u \in L^p(\Omega) : \eqref{eq:cond_T_map_B_to_B} \mbox{ and } \eqref{eq:cond_T_contraction} \mbox{ holds} } \cap U_3.
    \]
    Next, define the closed set of admissible controls
    \[
        U_{ad} := \set{u \in L^2(\Omega) : \norm{u}_{L^p(\Omega)} \le \theta, \ p > n  } ,
    \]
    where $\theta$ is chosen such that $U_{ad} \subset U$.  The set $U_{ad}$ is nonempty (see Remark \ref{rem:U_set_nonempty}).
\end{definition}
\begin{lemma}[$S$ Lipschitz]
\label{lem:S_Lipschitz}
Recall that $\cS$ is the control to state map.  If $u_1, u_2 \in U$, then
\begin{align}
    \normS{\cS(u_1)-\cS(u_2)}12\Omega &\le C(n,p,B_1,\Omega) \normS{u_1-u_2}{-1}2\Omega, \label{eq:S_Lipschitz} \\
    \normS{\cS(u_1)-\cS(u_2)}2p\Omega &\le C(n,p,B_1,\Omega) \norm{u_1-u_2}_{L^p(\Omega)}. \label{eq:S_Lipschitz_pt2}
\end{align}
\end{lemma}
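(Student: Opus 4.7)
The plan is to reduce both estimates to linear elliptic problems for $\delta y := \cS(u_1) - \cS(u_2)$ and apply appropriate coercivity/regularity results. Setting $y_i := \cS(u_i)$ and $\delta u := u_1 - u_2$, the functions $y_i$ share the boundary data $v$, so $\delta y \in \sobZ1\infty\Omega \cap \sob2p\Omega$; since $u_i \in U$, \thmref{thm:fixed_point} places both $y_i$ in $\mathbb{B}$ and thus $\normS{y_i}2p\Omega \le B_1$, with $\normSZ{y_i}1\infty\Omega$ controlled by $B_1$ through Morrey's embedding.

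For \eqref{eq:S_Lipschitz} I would work with the divergence form. With $y_t := y_2 + t\delta y$, the fundamental theorem of calculus writes the nonlinear flux difference as
\[
\frac{\grad y_1}{\cQ(y_1)} - \frac{\grad y_2}{\cQ(y_2)} = A\,\grad \delta y, \qquad A := \int_0^1 \frac{\cQ(y_t)^2 I - \grad y_t \grad y_t^T}{\cQ(y_t)^3}\,dt,
\]
so subtraction of the state equations yields the linear Dirichlet problem $-\divg{A\,\grad \delta y} = \delta u$ in $\Omega$ with $\delta y = 0$ on $\bdy\Omega$. By \lemref{eq:positive_coeff}, the integrand in $A$ has smallest eigenvalue $\cQ(y_t)^{-3}$, which, combined with the $L^\infty$ control of $\grad y_t$ coming from $B_1$, gives uniform ellipticity and boundedness of $A$ with constants depending only on $n, p, B_1$. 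Since $\delta u \in L^p(\Omega) \hookrightarrow \sob{-1}2\Omega$, testing against $\delta y \in \sobZ12\Omega$ and applying Poincar\'e's inequality then delivers \eqref{eq:S_Lipschitz}.

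For \eqref{eq:S_Lipschitz_pt2} I would work in non-divergence form. Subtracting the expanded equation \eqref{eq:state_linearized} for $y_1$ and $y_2$ and isolating $\Dif^2\delta y$ on the left gives, schematically,
\[
-\del{\cQ(y_1)^2 I - \grad y_1 \grad y_1^T}:\Dif^2 \delta y = \delta u\,\cQ(y_1)^3 + u_2\sbr{\cQ(y_1)^3 - \cQ(y_2)^3} + M:\Dif^2 y_2,
\]
where $M := \del{\cQ(y_1)^2 - \cQ(y_2)^2} I - \grad y_1 \grad \delta y^T - \grad \delta y \grad y_2^T$. The coefficient on the left is uniformly elliptic by \lemref{eq:positive_coeff} and H\"older continuous since $y_1 \in \sob2p\Omega \hookrightarrow C^{1,\alpha}(\clos{\Omega})$, so \cite[Theorem~9.15, Lemma~9.17]{DGilbarg_NTrudinger_2001a} applies. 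Estimating the right-hand side in $L^p$ using \eqref{eq:Q_Lip}, the identity $a^3 - b^3 = (a-b)(a^2 + ab + b^2)$, Morrey's embedding, and the a priori bound \eqref{eq:continuity_estimate_ytl} on $\normS{y_2}2p\Omega$, then leads to
\[
\normS{\delta y}2p\Omega \le C(n,p,B_1,\Omega)\,\del{\norm{\delta u}_{L^p(\Omega)} + K\,\normS{\delta y}2p\Omega}, \qquad K = O\del{\norm{u_2}_{L^p(\Omega)} + \normS{v}2p\Omega}.
\]

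The main obstacle is the last absorption step: one must track the constants from \eqref{eq:continuity_estimate_ytl} and the embedding $\sob2p\Omega \hookrightarrow \sob1\infty\Omega$ carefully to confirm that $CK < 1$ uniformly for $u_1,u_2 \in U$. This mirrors the contraction argument in \thmref{thm:fixed_point}, and indeed the set $U$ in Definition~\ref{def:admissible_control} was constructed precisely so that the smallness condition \eqref{eq:cond_T_contraction} forces $CK$ to stay strictly below $1$, at which point \eqref{eq:S_Lipschitz_pt2} follows.
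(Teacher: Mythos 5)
Your proposal is correct, and for the second estimate it is essentially the paper's argument: the paper also rewrites the two state equations in non-divergence form, subtracts (freezing the coefficient at $y_2$ rather than your $y_1$, an immaterial difference), estimates the right-hand side in $L^p(\Omega)$ via the Lipschitz continuity of $\cQ$ and the embedding $\sob2p\Omega \hookrightarrow \sob1\infty\Omega$, invokes \cite[Theorem~9.15, Lemma~9.17]{DGilbarg_NTrudinger_2001a} together with \lemref{eq:positive_coeff}, and then absorbs the $\normS{\delta y}2p\Omega$ term exactly as in \thmref{thm:fixed_point}, which is why \eqref{eq:cond_T_contraction} is built into $U$; your reading of the absorption step is the intended one. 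Where you genuinely diverge is \eqref{eq:S_Lipschitz}. The paper keeps the divergence form but freezes the scalar coefficient $1/\cQ(\cS(u_1))$ and moves the coefficient-difference term $\divg\del{\del{\tfrac{1}{\cQ(\cS(u_1))}-\tfrac{1}{\cQ(\cS(u_2))}}\grad\cS(u_2)}$ to the right-hand side; this produces a term proportional to $\normSZ{\cS(u_1)-\cS(u_2)}12\Omega$ itself, which is then absorbed using the a priori bound \eqref{eq:estimate_y_2}, so the paper's proof of the $W^1_2$ estimate also leans on the smallness encoded in $U$. Your mean-value (fundamental theorem of calculus) representation $\frac{\grad y_1}{\cQ(y_1)}-\frac{\grad y_2}{\cQ(y_2)}=A\,\grad\delta y$ instead yields a single symmetric, uniformly elliptic divergence-form equation $-\divg\del{A\,\grad\delta y}=\delta u$ with ellipticity constant $\ge\del{1+C_S^2B_1^2}^{-3/2}$ (your eigenvalue computation is right: the small eigenvalue of $\cQ^2 I-\grad y_t\grad y_t^T$ is $1$), so testing with $\delta y$ and Poincar\'e gives \eqref{eq:S_Lipschitz} with no absorption at all --- it only uses $y_1,y_2\in\mathbb{B}$, not the contraction condition. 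That is a cleaner and slightly stronger route for the first inequality; the paper's version is more uniform in style with the fixed-point proof but pays for it with the extra smallness dependence.
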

\begin{proof}
Recall the equations satisfied by $\cS(u_1) \in Y$ and $\cS(u_2) \in Y$
\begin{align*}
    -\divg{\del{\frac{1}{\cQ(\cS(u_1))}\grad \cS(u_1)}} = u_1 , \qquad
    -\divg{\del{\frac{1}{\cQ(\cS(u_2))}\grad \cS(u_2)}} = u_2 .
\end{align*}
On subtracting and rearranging, we obtain
\[
    -\divg{\del{\frac{1}{\cQ(\cS(u_1))}\grad \del{\cS(u_1)-\cS(u_2)}}}
        = \divg{\del{\del{\frac{1}{\cQ(\cS(u_1))}-\frac{1}{\cQ(\cS(u_2))}} \grad \cS(u_2)}} +  u_1 - u_2    .
\]
Using the characterization of $\sob{-1}2\Omega$ functions \cite[P.~283, Theorem 1]{LCEvans_1998a}, and $\cS(u_2) \in \mathbb{B}$ which implies $\cQ(\cS(u_1)) \le \del{1+ C_S^2 B_1^2}^{1/2}$, we have the a priori estimate of the solution $\cS(u_1)-\cS(u_2)$ of the above elliptic PDE:
\begin{align*}
    \normSZ{\cS(u_1)-\cS(u_2)}12\Omega
   &\le \del{1+ C_S^2 B_1^2}^{1/2}  \normSZ{\cS(u_2)}1\infty\Omega
        \norm{\frac{1}{\cQ(\cS(u_1))}-\frac{1}{\cQ(\cS(u_2))}}_{L^2(\Omega)}
              + \normS{u_1 - u_2}{-1}2\Omega   .
\end{align*}
Moreover, for $p>n$, $\sob2p\Omega \subset\subset \sob1\infty\Omega$ with embedding constant $C_S$, we get
\begin{align*}
    \normSZ{\cS(u_1)-\cS(u_2)}12\Omega
   &\le C_S \normS{\cS(u_2)}2p\Omega \del{1+ C_S^2 B_1^2}^{1/2}
            \normSZ{\cS(u_1)-\cS(u_2)}12\Omega
          + \normS{u_1 - u_2}{-1}2\Omega .
\end{align*}
Finally due to \eqref{eq:estimate_y_2} we have
\[
    C_S \normS{\cS(u_2)}2p\Omega \del{1+ C_S^2 B_1^2}^{1/2}
        < \frac{1}{B_1} < 1 ,
\]
where the last inequality is due to the fact that $B_1 > 1$, which implies \eqref{eq:S_Lipschitz}.

To prove \eqref{eq:S_Lipschitz_pt2}, set $y_1 = \cS(u_1)$ and $y_2 = \cS(u_2)$, and rewrite the state equation \eqref{eq:state} into the non-divergence form as
\[
- \del{\cQ(y_1)^2 I - \grad y_1 \grad y_1^T} : \Dif^2 y_1
   = u_1 \cQ(y_1)^3   ~ \mbox{ in } \Omega,
   \quad \mbox{and } ~
- \del{\cQ(y_2)^2 I - \grad y_2 \grad y_2^T} : \Dif^2 y_2
   = u_2 \cQ(y_2)^3   ~ \mbox{ in } \Omega.
\]
After subtracting, rearranging, and setting $\delta y = y_1 - y_2$, we obtain
\begin{align*}
    - \del{\cQ(y_2)^2 I - \grad y_2 \grad y_2^T} : \Dif^2 \delta y
   &= (u_1 - u_2) \cQ(y_1)^3 + u_2 \del{\cQ(y_1)^3 - \cQ(y_2)^3} \\
   &\qquad   - \Big{\{} \cQ(y_2)^2 I - \cQ(y_1)^2 I
   + \grad \delta y \grad y_1^T + \grad y_2 \grad \delta y^T \Big{\}} :\Dif^2 y_1 .
\end{align*}
Recalling \eqref{eq:cond_T_contraction}, the remaining proof is similar to the proof of \thmref{thm:fixed_point}
and is omitted to avoid repetition.
\end{proof}

\section{Optimality Conditions}
\label{s:reduced_problem}
Using the control to state map, we can rewrite the minimization problem \eqref{eq:cost}-\eqref{eq:state} in the following reduced form:
\begin{align}
\label{eq:reduced_cost}
    \inf \cJ(u) := \cJ(\cS(u),u) \quad \mbox{over } u \in U_{ad},
\end{align}
where
\[
    \cJ(\cS(u),u) = \cJ_1(\cS(u)) + \cJ_2(u),
\]
with
\[
    \cJ_1(\cS(u)) = \frac{1}{2}\normLt{S(u)-y_d}\Omega^2, \quad
    \cJ_2(u) = \frac{\alpha}{2}\normLt{u}\Omega^2.
\]

We begin by introducing the notion of a minimizer for our optimal control problem.
\begin{definition}[optimal control]
\label{def:optimal_control}
A control $\uop \in U_{ad}$ is said to be \emph{optimal} if it satisfies, together with the associated optimal state $\yop(\uop) := \cS(\uop)$,
\[
\cJ(y(u),u) \ge \cJ(\yop(\uop), \uop) \quad \mbox{for all } u \in U_{ad} .
\]
A control $\uop \in U_{ad}$ is said to be \emph{locally optimal} in the sense of $L^p(\Omega)$, if there exists an
$\epsilon > 0$ such that above inequality holds for all $u \in U_{ad}$ such that $\norm{u - \uop}_{L^p(\Omega)} \le \epsilon$.
\end{definition}
The above definition clearly distinguishes between local and global solutions to our optimal control problem. Although in \thmref{thm:control_existence} we prove the existence of a global optimal control, a local optimal control plays a central role in optimization theory and algorithms. Generally speaking, gradient based numerical schemes only guarantee a local optimal solution. Thus, we state our first order necessary optimality conditions in \thmref{thm:first_optimality} in terms of a local optimal control. Existence of such a local optimal control is shown in
Corollary~\ref{cor:quad_growth} under a second order condition (Assumption~\ref{ass:ssc}).
In order to get to \thmref{thm:first_optimality} and Corollary~\ref{cor:quad_growth}, we prove several new results which do not assume the local condition on the control and are central to this paper. In particular, \lemref{lem:regularity_non_div} is a standalone result which further extends the regularity theory of elliptic PDEs in non-divergence form.
Moreover, Proposition~\ref{prop:frechet_derivatives} and Lemmas \ref{lem:A_Lipschitz}, \ref{lem:Sp_Lipschitz} hold for an arbitrary $u \in U$ (recall Definition \ref{def:admissible_control}).

\subsection{Existence Of An Optimal Control}

%
\begin{theorem}
\label{thm:control_existence}
There exists an optimal control $\uop$ solving the reduced minimization problem \eqref{eq:reduced_cost}.
\end{theorem}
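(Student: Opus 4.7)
The plan is a standard direct-method argument, adapted to handle the nonlinear state constraint. Since $\cJ \ge 0$, the infimum $m := \inf_{u \in U_{ad}} \cJ(\cS(u),u)$ is finite, and I would take a minimizing sequence $\{u_n\} \subset U_{ad}$ with $\cJ(\cS(u_n),u_n) \to m$. By the definition of $U_{ad}$ the sequence is bounded in $L^p(\Omega)$ and hence in $L^2(\Omega)$, so after passing to a (non-relabeled) subsequence $u_n \weakto \uop$ weakly in $L^2(\Omega)$. Because $U_{ad}$ is convex and $L^2$-closed (if $u_n \to u$ in $L^2$, a subsequence converges a.e.\ and Fatou gives $\norm{u}_{L^p(\Omega)} \le \theta$), it is weakly closed, so $\uop \in U_{ad} \subset U$; in particular the candidate optimal state $\yop := \cS(\uop)$ is well-defined by \thmref{thm:state_strong} and \thmref{thm:fixed_point}.

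The key step is to upgrade weak convergence of controls into strong convergence of states. I would invoke the compact embedding $L^2(\Omega) \subset\subset \sob{-1}{2}{\Omega}$ (Rellich--Kondrachov, by duality) to obtain $u_n \to \uop$ strongly in $\sob{-1}{2}{\Omega}$. Since $u_n,\uop \in U$, the control-to-state Lipschitz estimate \eqref{eq:S_Lipschitz} of \lemref{lem:S_Lipschitz} applies and gives
\[
    \normS{\cS(u_n) - \cS(\uop)}{1}{2}{\Omega}
    \le C\, \normS{u_n - \uop}{-1}{2}{\Omega} \longrightarrow 0.
\]
Thus $\cS(u_n) \to \yop$ strongly in $\sob{1}{2}{\Omega}$ and, by continuous embedding, in $L^2(\Omega)$, so $\cJ_1(\cS(u_n)) \to \cJ_1(\yop)$.

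To finish, weak lower semicontinuity of the $L^2$-norm yields $\cJ_2(\uop) \le \liminf_n \cJ_2(u_n)$, and therefore
\[
    \cJ(\yop,\uop) = \cJ_1(\yop) + \cJ_2(\uop)
    \le \lim_n \cJ_1(\cS(u_n)) + \liminf_n \cJ_2(u_n) = m,
\]
so $\uop$ attains the infimum. The only real subtlety is passing to the limit in the nonlinear state equation along a merely weakly convergent sequence of controls; this is resolved cleanly by the quantitative estimate \eqref{eq:S_Lipschitz}, which converts the compactness gain $L^2(\Omega) \subset\subset \sob{-1}{2}{\Omega}$ into norm convergence of the states and thereby bypasses any direct nonlinear passage to the limit.
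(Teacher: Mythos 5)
Your argument is correct, and it reaches the goal by a genuinely different mechanism than the paper. The paper also runs a direct method, but after extracting $u_n \rightharpoonup \uop$ weakly in $L^p(\Omega)$ it works on the \emph{state} side: the uniform $\sob2p\Omega$ bound from \thmref{thm:fixed_point} (membership in $\mathbb{B}$) together with the compact embedding $\sob2p\Omega \subset\subset \sob1\infty\Omega$ gives strong convergence $y_n \to \yop$ in $v \oplus \sobZ1\infty\Omega$, and the limit is then identified as the state for $\uop$ by passing to the limit in the weak form \eqref{eq:state_weak}, using $L^p(\Omega) \subset\subset \sob{-1}\infty\Omega$. You instead apply compactness on the \emph{control} side ($L^2(\Omega) \subset\subset \sob{-1}2\Omega$, so weak $L^2$ convergence becomes strong $\sob{-1}2\Omega$ convergence) and then invoke the Lipschitz estimate \eqref{eq:S_Lipschitz} of \lemref{lem:S_Lipschitz}, which is legitimate since $u_n, \uop \in U_{ad} \subset U$; this yields $\cS(u_n) \to \cS(\uop)$ strongly in $\sob12\Omega$ and makes the identification of the limit state automatic, so no nonlinear passage to the limit in the weak formulation is needed. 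The trade-offs: your route leans on the quantitative stability of the control-to-state map (hence implicitly on the smallness conditions built into $U$, which the paper's proof also uses through $\mathbb{B}$), and it only gives $\sob12\Omega$ convergence of the states, which is all that $\cJ_1$ requires; the paper's route gives the stronger $\sob1\infty\Omega$ convergence and identifies the limit directly from the variational equation without appealing to \lemref{lem:S_Lipschitz}. Two small points to keep tidy: the step ``bounded in $L^p(\Omega)$ hence in $L^2(\Omega)$'' uses $p > n \ge 2$ and boundedness of $\Omega$ (alternatively, the $\alpha$-term of $\cJ$ already bounds the minimizing sequence in $L^2(\Omega)$), and your weak-closedness argument for $U_{ad}$ (convexity plus strong $L^2$-closedness via a.e.\ subsequences and Fatou, then Mazur) is fine and plays the role of the paper's appeal to weak sequential compactness of the closed bounded convex set $U_{ad}$ in the reflexive space $L^p(\Omega)$.
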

\begin{proof}
The proof is based on a minimizing sequence argument. As $\cJ$ is bounded below, there exists a minimizing sequence $\{u_n\}_{n\in\mathbb{N}}$, i.e.
\[
    \inf_{u\in U_{ad}} \cJ(S(u),u) = \lim_{n\rightarrow \infty} \cJ(S(u_n),u_n).
\]
By Definition~\ref{def:admissible_control}, $U_{ad}$ is a nonempty, closed, bounded and convex subset of $L^p(\Omega)$ which is a reflexive Banach space for $n < p < \infty$, thus weakly sequentially compact. Consequently, we can extract a weakly convergent subsequence $\{u_{n_k}\}_{k\in \mathbb{N}} \subset L^p(\Omega)$ i.e.
\[
    u_{n_k} \rightharpoonup \uop \mbox{ in } L^p(\Omega), \quad \uop \in U_{ad} .
\]
This $\uop$ is the candidate for our optimal control.

In the sequel, we drop the index $k$ when extracting subsequences. Using \thmref{thm:fixed_point}, $\cS(u_n) = y_n$ satisfies the state equation \eqref{eq:state}. Since $Y \subset\subset v \oplus \sobZ1\infty\Omega$ for $p > n$, the Rellich-Kondrachov theorem yields a strongly convergent subsequence $\{ y_n \}_{n\in\mathbb{N}} \subset v \oplus \sobZ1\infty\Omega$, i.e.
\[
    y_n \rightarrow \yop \mbox{ in } v \oplus \sobZ1\infty\Omega .
\]
Note that the limit $\yop$ is the state corresponding to the control $\uop$. This results from replacing $\yop$ with $y_n$ in the variational equation \eqref{eq:state_weak} taking the limit and making use of the embedding $L^p(\Omega) \subset \subset \sob{-1}\infty\Omega$.

Finally, using the fact that $\cJ_2(u)$ is continuous in $L^2$ and convex, together with the strong convergence $y_n \rightarrow \yop$ in $L^\infty(\Omega)$, it follows that $\cJ$ is weakly lower semicontinous, whence
\[
    \inf_{u \in U_{ad}} \cJ(u) = \liminf_{n\rightarrow \infty} \del{\cJ_1(\cS(u_n))+ \cJ_2(u_n)} \ge \cJ_1(\cS(\uop)) + \cJ_2(\uop) = \cJ(\uop) .
\]
\end{proof}
%

\subsection{First Order Necessary Conditions}
In the following, let $\uop$ denote the \emph{local} optimal control. We derive
the first order necessary optimality conditions that have to be satisfied by $\uop$ with associated state $\yop$. 
We recall the following result from \cite{FTroltzsch_2010a}.
\begin{lemma}\label{lem:first_order_opt}
Recall that $U_{ad} \subset L^p(\Omega)$ is nonempty and convex, and $\mathcal{J}$ is Fr\'echet differentiable in an open subset of $L^p(\Omega)$ containing $U_{ad}$.  If $\uop \in U_{ad}$ denotes a local optimal control,
then the first order necessary optimality condition satisfied by $\uop$ is
\[
    \pair{\cJ'(\uop), u - \uop}_{L^2(\Omega),L^2(\Omega)} \ge 0, \quad \forall u \in U_{ad} .
\]
\end{lemma}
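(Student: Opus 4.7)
The plan is the classical convex variational argument, which underlies the result cited from Tr\"oltzsch.

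First, I would exploit the convexity of $U_{ad}$: for any $u \in U_{ad}$ and any $t \in (0,1]$, the convex combination $u_t := (1-t)\uop + t u = \uop + t(u - \uop)$ lies in $U_{ad}$. Since $\norm{u_t - \uop}_{L^p(\Omega)} = t\,\norm{u - \uop}_{L^p(\Omega)}$, by choosing $t$ sufficiently small I can guarantee that $u_t$ lies in the $L^p$-neighborhood of $\uop$ on which local optimality applies (Definition~\ref{def:optimal_control}); hence $\cJ(u_t) \ge \cJ(\uop)$ for all such $t$.

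Second, rearranging and dividing by $t > 0$ yields
\[
\frac{\cJ(\uop + t(u - \uop)) - \cJ(\uop)}{t} \ge 0 .
\]
Since $\cJ$ is assumed Fr\'echet differentiable on an open subset of $L^p(\Omega)$ containing $U_{ad}$, I can pass to the limit $t \to 0^+$ to obtain $\pair{\cJ'(\uop), u - \uop} \ge 0$.

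The only delicate point, and the main obstacle to writing the conclusion exactly as stated, is interpreting the displayed duality pairing. A priori, $\cJ'(\uop)$ is an element of $L^p(\Omega)^\ast$, but the statement writes the pairing over $L^2(\Omega)\times L^2(\Omega)$. This is a representation issue and is resolved by the structure of $\cJ$: the contribution of $\cJ_2$ to the gradient is $\alpha\uop \in L^2(\Omega)$, and the chain-rule contribution from $\cJ_1 \circ \cS$ can be represented (via the adjoint equation associated with $D_u \cS(\uop)$ from \thmref{thm:state_strong}) by an $L^2(\Omega)$ function as well. Since $U_{ad} \subset L^2(\Omega)$, the abstract dual pairing then collapses to the stated $L^2$ inner product between $\cJ'(\uop)$ and the admissible perturbation $u - \uop$, which completes the argument.
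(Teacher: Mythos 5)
Your argument is correct and is exactly the standard convexity argument underlying the result, which the paper itself does not prove but simply recalls from Tr\"oltzsch's book: convexity of $U_{ad}$ plus the $L^p$-scaling $\norm{u_t-\uop}_{L^p(\Omega)} = t\norm{u-\uop}_{L^p(\Omega)}$ places $u_t$ in the local-optimality neighborhood for small $t$, and Fr\'echet differentiability lets you pass to the limit in the difference quotient. Your remark on representing $\cJ'(\uop)$ as an $L^2$ function (via $\adjop+\alpha\uop$, cf.\ \thmref{thm:first_optimality}) correctly resolves the pairing notation, so nothing is missing.
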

\begin{theorem}
\label{thm:first_optimality}
If $\uop \in U_{ad}$ denotes a local optimal control, then the first-order optimality conditions are given by the state equation \eqref{eq:state}, the adjoint equation
\begin{equation}\label{eq:adjoint_strong}
    -\divg{\del{ A[\yop]
                \grad \adjop}} = \yop-y_d \quad \mbox{ in } \Omega, \qquad
                  \adjop = 0 \quad \mbox{ on } \bdy\Omega,
\end{equation}
where
\begin{equation}\label{eqn:adjoint_coef_A}
  A[\yop] = \frac{1}{Q(\yop)}\del{ \cI - \frac{\grad \yop \grad \yop^T }{Q(\yop)^2} },
\end{equation}
and the equation for the control
\begin{equation}\label{eq:control_weak}
    \pair{\adjop + \alpha \uop, u-\uop}_{L^2(\Omega),L^2(\Omega)} \ge 0, \quad
        \forall u \in U_{ad} .
\end{equation}
\end{theorem}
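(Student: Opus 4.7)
The plan is to start from the abstract first-order condition $\pair{\cJ'(\uop),u-\uop}_{L^2(\Omega),L^2(\Omega)}\ge 0$ provided by \lemref{lem:first_order_opt} and translate the derivative of the reduced functional into a pointwise $L^2$ object via an adjoint state, so that the inequality collapses to \eqref{eq:control_weak}. First I would differentiate $\cJ(u)=\tfrac12\normLt{\cS(u)-y_d}{\Omega}^2+\tfrac{\alpha}{2}\normLt{u}{\Omega}^2$ by the chain rule, using that $\cS$ is Fr\'echet differentiable (\thmref{thm:state_strong}); this yields
\[
\pair{\cJ'(\uop),h}_{L^2(\Omega),L^2(\Omega)}=\pair{\yop-y_d,\cS'(\uop)\pair{h}}_{L^2(\Omega),L^2(\Omega)}+\alpha\pair{\uop,h}_{L^2(\Omega),L^2(\Omega)}.
\]
Combining the formula $D_u\cS(\uop)=-[D_y\cN(\yop,\uop)]^{-1}\circ D_u\cN(\yop,\uop)$ with the fact that $D_u\cN(\yop,\uop)\pair{h}=-h$ and $D_y\cN(\yop,\uop)\pair{z}=-\divg(A[\yop]\grad z)$ from \lemref{lem:Frechet_N_W2p}, the sensitivity $z:=\cS'(\uop)\pair{h}$ is the unique element of $Y\cap\sobZ1\infty\Omega$ satisfying $-\divg(A[\yop]\grad z)=h$ with homogeneous Dirichlet trace (since perturbing $u$ does not alter the boundary datum $v$).

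Next I would introduce $\adjop$ as the unique weak solution in $\sobZ12\Omega$ of the adjoint problem \eqref{eq:adjoint_strong}. Well-posedness will be established via Lax--Milgram: the symmetric matrix
\[
A[\yop]=\frac{1}{\cQ(\yop)^3}\del{\cQ(\yop)^2 I-\grad\yop\grad\yop^T}
\]
is bounded in $L^\infty(\Omega)$ because $\yop\in\sob2p\Omega\subset\subset\sob1\infty\Omega$ for $p>n$, and uniformly coercive thanks to \lemref{eq:positive_coeff}, which gives $\mathbf{b}^T A[\yop]\mathbf{b}\ge\cQ(\yop)^{-3}\mathbf{b}^T\mathbf{b}$, so the ellipticity constant can be taken to be $(1+C_S^2 B_1^2)^{-3/2}$ by means of \eqref{eq:estimate_y_2}. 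Since $\yop-y_d\in L^2(\Omega)$, Lax--Milgram applies.

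With $\adjop$ in hand, I would test the sensitivity equation against $\adjop$ and the adjoint equation against $z$. The symmetry of $A[\yop]$ together with the vanishing traces of $z$ and $\adjop$ yields
\[
\pair{\yop-y_d,z}_{L^2(\Omega),L^2(\Omega)}=\int_\Omega A[\yop]\grad\adjop\cdot\grad z=\pair{\adjop,h}_{L^2(\Omega),L^2(\Omega)},
\]
so that $\pair{\cJ'(\uop),h}_{L^2(\Omega),L^2(\Omega)}=\pair{\adjop+\alpha\uop,h}_{L^2(\Omega),L^2(\Omega)}$ for every admissible direction $h$. Specializing to $h=u-\uop$ with $u\in U_{ad}$ and invoking \lemref{lem:first_order_opt} gives \eqref{eq:control_weak}; \eqref{eq:state} is satisfied by the definition $\yop=\cS(\uop)$.

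The main obstacle is the well-posedness of \eqref{eq:adjoint_strong} and the justification of the two integrations by parts. The first hinges entirely on uniform ellipticity of $A[\yop]$, which is not automatic for the mean-curvature linearization: it fails in general and only becomes available here because the smallness conditions built into Definition~\ref{def:admissible_control}, via \eqref{eq:estimate_y_2}, provide a quantitative bound on $\cQ(\yop)$. The second requires that $z\in\sobZ12\Omega\cap\sob2p\Omega$ and $\adjop\in\sobZ12\Omega$ have matching regularity for the duality product with the $L^\infty$ coefficient $A[\yop]$, which is the case by construction. Once these two points are confirmed, the remainder of the argument reduces to standard chain-rule and adjoint manipulations.
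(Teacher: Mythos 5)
Your proposal is correct and follows essentially the same route as the paper: chain rule for the reduced functional, the IFT formula for $\cS'(\uop)$ with $D_u\cN=-\cI$ and $D_y\cN(\yop,\uop)\pair{z}=-\divg\del{A[\yop]\grad z}$, and an adjoint state so that $\cJ'(\uop)=\adjop+\alpha\uop$, after which \lemref{lem:first_order_opt} yields \eqref{eq:control_weak}. The only difference is presentational: the paper defines $\adjop=\sbr{D_y\cN(\yop,\uop)}^{-*}(\yop-y_d)$ abstractly (relying on the isomorphism property from the implicit function theorem, with regularity deferred to Corollary~\ref{cor:regularity_adjoint}), whereas you construct $\adjop$ explicitly via Lax--Milgram using \lemref{eq:positive_coeff} and the $W^1_\infty$ bound on $\yop$, and verify the duality identity by symmetric integration by parts --- a harmless, slightly more self-contained variant.
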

\begin{proof}
Using \thmref{thm:state_strong} we can infer that $\cJ$ is Fr\'echet differentiable, and the Fr\'echet derivative of $\cJ$ at $\uop$ in a direction $h \in C(\uop)$ is
\[
    \pair{\cJ'(\uop), h}_{L^{p'}(\Omega),L^p(\Omega)}
        = \pair{\cJ_1'(S(\uop)) , \cS'(\uop) h}_{Y^*,Y}
          + \pair{\cJ_2'(\uop), h}_{L^{p'}(\Omega), L^{p}(\Omega)} ,
\]
whence
\begin{align*}
    \pair{\cJ'(\uop), h}_{L^{p'}(\Omega),L^p(\Omega)}
     &= \pair{\yop - y_d , \cS'(\uop) h}_{L^2(\Omega),L^2(\Omega)}
       + \alpha \pair{\uop, h}_{L^2(\Omega),L^2(\Omega)} \\
     &= \pair{\cS'(\uop)^*(\yop - y_d) , h}_{L^2(\Omega),L^2(\Omega)}
       + \alpha \pair{\uop, h}_{L^2(\Omega),L^2(\Omega)}   .
\end{align*}
Recalling the expression for $\cS'(\uop)$ from \thmref{thm:state_strong} and the fact that $D_u\cN(\yop,\uop) = -\cI$, where we have dropped the dependence of $\cN$ on $v$, we get
\begin{align*}
    \pair{\cJ'(\uop), h}_{L^{p'}(\Omega),L^p(\Omega)}
     &= \pair{\sbr{D_y\cN(\yop, \uop)}^{-*}(\yop - y_d) , h}_{L^2(\Omega),L^2(\Omega)} + \alpha \pair{\uop, h}_{L^2(\Omega),L^2(\Omega)} .
\end{align*}
Setting $\adjop = \sbr{D_y\cN(\cS(\uop), \uop)}^{-*} (\yop - y_d)$, we get \eqref{eq:adjoint_strong}.  Moreover, we see that $\cJ'(\uop) = \adjop + \alpha \uop$ which yields \eqref{eq:control_weak}.  We remark that the pairing $\pair{\cJ'(\uop),h}_{L^{p'}(\Omega),L^p(\Omega)}$ can be simply treated as the $L^2$ pairing.
\end{proof}
\begin{remark}\label{rem:functional_derivative}
In general, $\cJ'(u) = \adjoint(y) + \alpha u$ for an arbitrary $u$ in $U_{ad}$, where $y$ solves \eqref{eq:state} with $u$ as right-hand-side, and $\adjoint(y)$ solves \eqref{eq:adjoint_strong} with right-hand-side given by $y - y_d$.
\end{remark}

Next we will generalize a result from Gilbarg-Trudinger \cite[Theorem 9.15, Lemma 9.17]{DGilbarg_NTrudinger_2001a} where the lower order coefficient is in $L^q(\Omega)$, for $q > n$, instead of being in $L^\infty(\Omega)$.  This result is crucial to prove the necessary regularity for the adjoint equation \eqref{eq:adjoint_strong}.
\begin{lemma}\label{lem:regularity_non_div}
If $A \in L^\infty(\Omega)^{n\times n}$, ${\bf b} \in L^q(\Omega)^n$, $n < q < \infty$, then for all $f \in L^r(\Omega)$ with $1 < r \le q$, there exists a unique $w \in \sob2r\Omega \cap \sobZ1r\Omega$ solving
\begin{align}\label{eq:non_div}
\begin{aligned}
    -A:\Dif^2 w - {\bf b}\cdot \grad w &= f \quad \mbox{in } \Omega \\
        w &= 0 \quad \mbox{on } \bdy\Omega ,
\end{aligned}
\end{align}
with
\begin{align}\label{eq:non_div_estimate}
    \normS{w}2r\Omega \le C_\Omega \norm{f}_{L^r(\Omega)} .
\end{align}
%
\end{lemma}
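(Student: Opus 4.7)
The idea is to cast \eqref{eq:non_div} as a compact perturbation of the principal part $-A:\Dif^2$ and then apply the Fredholm alternative. Set $X := \sob{2}{r}{\Omega}\cap\sobZ{1}{r}{\Omega}$ and $L_0 w := -A : \Dif^2 w$. By \cite[Theorem~9.15 and Lemma~9.17]{DGilbarg_NTrudinger_2001a}, $L_0 : X \to L^r(\Omega)$ is a Banach space isomorphism for every $1 < r < \infty$. Equation \eqref{eq:non_div} is then equivalent to the fixed-point equation
\[
    (I - K)w = L_0^{-1} f , \qquad K w := L_0^{-1}\del{\mathbf{b}\cdot\grad w} ,
\]
on $X$, and the task reduces to showing that $I - K$ is invertible on $X$.

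The first step is compactness of $K:X \to X$. The strict inequality $q > n$ provides the necessary compactness margin. Define $s$ by $1/s = 1/r - 1/q$, so that H\"older's inequality yields $\mathbf{b}\cdot\grad w \in L^r(\Omega)$ whenever $\grad w \in L^s(\Omega)^n$. A direct check shows that $s$ sits \emph{strictly} below the Sobolev conjugate of $r$ precisely because $q > n$, so Rellich--Kondrachov makes $\grad : \sob{2}{r}{\Omega} \to L^s(\Omega)^n$ compact. Composition with the bounded operator $L_0^{-1}$ yields compactness of $K$ on $X$. The edge cases $r \ge n$ are handled analogously, using $W^1_r \hookrightarrow L^\infty$ (compactly) for $r > n$ and $W^1_n \hookrightarrow L^t$ (compactly) for every finite $t$ when $r = n$.

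The second step is injectivity of $I-K$, i.e., uniqueness for the homogeneous problem. For this I would invoke the Aleksandrov--Bakelman--Pucci maximum principle \cite[Theorem~9.1]{DGilbarg_NTrudinger_2001a}: since $\mathbf{b} \in L^q(\Omega) \subset L^n(\Omega)$ and $w \in X$ with $w|_{\bdy\Omega}=0$ is a strong solution of $-A:\Dif^2 w - \mathbf{b}\cdot\grad w = 0$, ABP yields $\sup_\Omega w \le 0$; applying the same estimate to $-w$ gives $w \equiv 0$.

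With compactness of $K$ and injectivity of $I - K$ in hand, the Fredholm alternative produces a unique $w \in X$ solving \eqref{eq:non_div} for every $f \in L^r(\Omega)$, and the bounded inverse theorem delivers the a priori estimate \eqref{eq:non_div_estimate}. The principal obstacle, and the only nontrivial issue, is the dual role played by the strict inequality $q > n$: it simultaneously furnishes compactness of the drift term (enabling Fredholm) and places $\mathbf{b}$ in $L^n$ so that ABP can deliver uniqueness. At the critical threshold $q = n$, compactness of the drift would fail and the strategy would require substantial modification.
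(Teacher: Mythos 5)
Your route is genuinely different from the paper's. You recast \eqref{eq:non_div} as a compact perturbation $(I-K)w = L_0^{-1}f$ of the principal part and invoke the Fredholm alternative, whereas the paper approximates ${\bf b}$ by bounded drifts ${\bf b}_m\in L^\infty(\Omega)^n$, extracts a weak $\sob2r\Omega$ limit for existence, and then proves \eqref{eq:non_div_estimate} by a compactness--contradiction argument; your exponent arithmetic for the compactness of $K$ (the margin $1/n-1/q>0$ coming from $q>n$, including the edge cases $r\ge n$) is correct, and your approach has the merit of making uniqueness of the homogeneous problem an explicit step, something the paper leans on (``$w=0$ by uniqueness'') without spelling out. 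Both arguments, like the lemma as stated, implicitly assume $A$ continuous and uniformly elliptic so that \cite[Theorem~9.15, Lemma~9.17]{DGilbarg_NTrudinger_2001a} applies to $L_0$; that is consistent with the intended application to $A[\yop]$.

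There is, however, a genuine gap in your uniqueness step. The Aleksandrov--Bakelman--Pucci principle \cite[Theorem~9.1]{DGilbarg_NTrudinger_2001a} requires $w \in C^0(\clos{\Omega})\cap W^2_{n,\mathrm{loc}}(\Omega)$, but your homogeneous solution is only known to lie in $\sob2r\Omega\cap\sobZ1r\Omega$ with $r$ possibly just above $1$. For $1<r<n$, $\sob2r\Omega$ does not embed into $W^2_{n,\mathrm{loc}}(\Omega)$, and for $r\le n/2$ it need not even embed into $C^0(\clos{\Omega})$; since the lemma is asserted for all $1<r\le q$ (and is used with $n=2$, where $r\in(1,2)$ occurs), ABP cannot be applied as written in exactly the range where uniqueness is needed for your Fredholm argument. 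The repair stays inside your own framework: bootstrap the homogeneous equation $-A:\Dif^2 w = {\bf b}\cdot\grad w$ using the isomorphism property of $L_0$ at every exponent. For $r<n$ one has $\grad w \in L^{r^*}(\Omega)$ with $1/r^*=1/r-1/n$, so ${\bf b}\cdot\grad w\in L^{\rho_1}(\Omega)$ with $1/\rho_1 = 1/r-(1/n-1/q)$, hence $w\in\sob2{\rho_1}\Omega$; iterating the fixed gain $1/n-1/q>0$ finitely many times (and using $\grad w\in L^\infty$ once the exponent exceeds $n$) gives $w\in\sob2q\Omega\subset C^0(\clos{\Omega})\cap W^2_{n,\mathrm{loc}}(\Omega)$, after which your ABP argument yields $w\equiv 0$, and the Fredholm alternative together with boundedness of $(I-K)^{-1}$ delivers \eqref{eq:non_div_estimate}. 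With that bootstrap inserted, your proof is complete.
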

\begin{proof}
We prove the result in two steps.

\emph{1. Existence and Uniqueness.} As $L^\infty(\Omega)$ is dense in $L^q(\Omega)$, for ${\bf b} \in L^q(\Omega)^n$ there exists $\{ {\bf b}_{m} \}_{m \in \mathbb{N}} \subset L^\infty(\Omega)^n$ such that
$    {\bf b}_m \rightarrow {\bf b} \quad \mbox{in } L^q(\Omega)^n . $
Similarly as $C^\infty(\Omega)$ is dense in $L^r(\Omega)$, therefore there exists $\{ f_m\}_{m \in \mathbb{N}} \subset C^\infty(\Omega)$ such that $f_m \rightarrow f$ in $L^r(\Omega)$. If we consider the auxiliary problem
\begin{align*}
    -A:\Dif^2 w_m - {\bf b}_m\cdot \grad w_m &= f_m \quad \mbox{in } \Omega \\
        w_m &= 0 \quad \mbox{on } \bdy\Omega ,
\end{align*}
using \cite[Lemma 9.17]{DGilbarg_NTrudinger_2001a}, we deduce
\[
    \normS{w_m}2r\Omega \le C_\Omega \norm{f_m}_{L^r(\Omega)}, \quad \forall r \in (1, \infty) ,
\]
and the right hand side converges to $\norm{f}_{L^r(\Omega)}$.
Since a unit ball in $\sob2r\Omega$ is weakly compact, there exists a subsequence, still labeled $w_m$, that converges weakly in $\sob2r\Omega$ and for $s = \frac{rq}{q-r}$ strongly in $\sob1s\Omega$ to a function $w \in \sob2r\Omega \cap \sobZ1r\Omega$.
It remains to show that $w$ satisfies \eqref{eq:non_div}. Because
\[
    \abs{\int_\Omega v\del{ {\bf b}_m \cdot \grad w_m}}
        \le \norm{v}_{L^{r'}(\Omega)} \norm{{\bf b}_m}_{L^{q}(\Omega)} \normS{w_m}1s\Omega ,
\]
we obtain
\[
    \int_\Omega f_m v = - \int_\Omega v \del{A : \Dif^2 w_m + {\bf b}_m \cdot \grad w_m}
        \rightarrow \int_\Omega f v = - \int_\Omega v \del{A : \Dif^2 w + {\bf b} \cdot \grad w} ,
\]
for all $v \in L^{r'}(\Omega)$.

\emph{2. Continuity estimate.} We first rewrite \eqref{eq:non_div}:
\begin{align*}
    -A:\Dif^2 w &= f + {\bf b}\cdot \grad w \quad \mbox{in } \Omega, \\
        w &= 0 \quad \mbox{on } \bdy\Omega .
\end{align*}
In view of the definition of $s=\frac{rq}{q-r}$, it immediately follows that
$f + {\bf b}\cdot \grad w \in L^r(\Omega)$, whence \cite[Lemma 9.17]{DGilbarg_NTrudinger_2001a} implies
\begin{align}\label{eq:non_div_estimate_1}
    \normS{w}2r\Omega \le C_\Omega \del{ \norm{f}_{L^r(\Omega)}
                          + \norm{\bf b}_{L^q(\Omega)} \normS{w}1s\Omega }  .
\end{align}
Toward this end, we will prove \eqref{eq:non_div_estimate} by contradiction. Let $\{ w_m \}_{m \in \mathbb{N}} \subset \sob2r\Omega \cap \sobZ1r\Omega$ be a sequence satisfying
\[
    \normS{w_m}2r\Omega = 1, \quad \norm{f_m}_{L^r(\Omega)} \rightarrow 0
\]
as $m \rightarrow \infty$, where $f_m = -A:\Dif^2 w_m - {\bf b}\cdot \grad w_m$.
Since the unit ball of $\sob2r\Omega$ is weakly compact, there exists a subsequence, that converges weakly in $\sob2r\Omega$ and strongly in $\sob1s\Omega$ to a $w \in \sob2r\Omega \cap \sobZ1r\Omega$. Therefore,
\[
    \int_\Omega f_m v = - \int_\Omega v \del{A:\Dif^2 w_m + {\bf b}\cdot \grad w_m}
    \rightarrow - \int_\Omega v \del{A:\Dif^2 w + {\bf b}\cdot \grad w} = 0,
\]
for all $v \in L^{r'}(\Omega)$, whence $-A:\Dif^2 w - {\bf b} \cdot \grad w = 0$ and $w = 0$ by uniqueness. But from \eqref{eq:non_div_estimate_1} we deduce
\[
    1 \le C_\Omega \norm{\bf b}_{L^q(\Omega)} \normS{w}1s\Omega ,
\]
which is a contradiction. Thus, \eqref{eq:non_div_estimate} holds.
\end{proof}
\begin{corollary}[regularity of the adjoint]
\label{cor:regularity_adjoint}
For every local optimal control $\uop$, there exists a unique $\adjop \in \sob22\Omega \cap \sobZ12\Omega$. If in addition $y_d \in L^p(\Omega)$, $p > n$, then $\adjop \in \sob2p\Omega$.
\end{corollary}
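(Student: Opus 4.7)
My plan is to rewrite the adjoint equation \eqref{eq:adjoint_strong} in non-divergence form and then invoke Lemma~\ref{lem:regularity_non_div}. Expanding the divergence in \eqref{eq:adjoint_strong} yields
\[
  -A[\yop] : \Dif^2 \adjop - \bb \cdot \grad \adjop = \yop - y_d \quad \text{in } \Omega, \qquad \adjop = 0 \quad \text{on } \bdy\Omega,
\]
where $\bb := \divg A[\yop]$ is the row-wise divergence of the symmetric matrix $A[\yop]$ from \eqref{eqn:adjoint_coef_A}. I would then check the three hypotheses of Lemma~\ref{lem:regularity_non_div}: (i) $A[\yop] \in L^\infty(\Omega)^{n\times n}$ and is uniformly elliptic, (ii) $\bb \in L^q(\Omega)^n$ for some $q > n$, and (iii) the right-hand side lies in the appropriate $L^r$ space.

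For (i), the embedding $\sob2p\Omega \subset \subset \sob1\infty\Omega$ (valid since $p > n$) gives $\grad \yop \in L^\infty(\Omega)^n$, so the entries of $A[\yop]$ are essentially bounded. Uniform ellipticity follows from Cauchy--Schwarz, mirroring \lemref{eq:positive_coeff}: for all $\xi \in \real^n$,
\[
  \xi^T A[\yop] \xi = \frac{1}{\cQ(\yop)}\del{|\xi|^2 - \frac{(\grad \yop \cdot \xi)^2}{\cQ(\yop)^2}} \ge \frac{|\xi|^2}{\cQ(\yop)^3},
\]
and $\cQ(\yop)^3$ is uniformly bounded in terms of $\normSZ{\yop}1\infty\Omega$. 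For (ii), the entries of $A[\yop]$ are smooth rational functions of $\grad \yop$ whose denominator $\cQ(\yop)^3 \ge 1$ is uniformly positive, so the chain rule produces expressions linear in components of $\Dif^2 \yop$ with uniformly bounded coefficients. Since $\yop \in \sob2p\Omega$ with $p > n$, this places $\bb$ in $L^p(\Omega)^n$, verifying (ii) with $q = p$.

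With (i) and (ii) in hand, the first assertion follows from Lemma~\ref{lem:regularity_non_div} applied with $r = 2$: the embedding gives $\yop \in L^\infty(\Omega)$, so $f := \yop - y_d \in L^2(\Omega)$ whenever $y_d \in L^2(\Omega)$, and the lemma returns the unique $\adjop \in \sob22\Omega \cap \sobZ12\Omega$. Under the additional hypothesis $y_d \in L^p(\Omega)$ with $p > n$, the right-hand side $f$ belongs to $L^p(\Omega)$, so invoking Lemma~\ref{lem:regularity_non_div} with $r = q = p$ delivers $\adjop \in \sob2p\Omega$.

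The only bookkeeping step worth careful attention is verifying $\bb \in L^p(\Omega)^n$: expanding the chain-rule derivative of $A[\yop]$ produces several terms, each of the form (bounded function of $\grad \yop$)$\,\cdot\,\partial_{ij}\yop$, and each lies in $L^p$. Once this and the ellipticity bound are established, the result is an immediate application of Lemma~\ref{lem:regularity_non_div}, so I do not anticipate a serious obstacle beyond this careful verification of the coefficient regularity.
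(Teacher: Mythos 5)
Your proposal is correct and follows essentially the same route as the paper: rewrite \eqref{eq:adjoint_strong} in non-divergence form with lower-order coefficient $\divg{\del{A[\yop]}} \in L^p(\Omega)$ (since $\yop \in \sob2p\Omega$, $p>n$, gives $A[\yop]\in\sob1p\Omega$) and apply \lemref{lem:regularity_non_div} with $q=p$, taking $r=2$ and then $r=p$. Your extra verification of uniform ellipticity and the chain-rule bound on the coefficients is just a more explicit account of what the paper leaves implicit.
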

\begin{proof}
Rewriting \eqref{eq:adjoint_strong}
\begin{equation*}
    -A[\yop]: \Dif^2 \adjop - \divg{\del{A[\yop]}} \cdot \grad \adjop
             = \yop-y_d \quad \mbox{ in } \Omega, \qquad
                  \adjop = 0 \quad \mbox{ on } \bdy\Omega
\end{equation*}
Since $\yop \in \sob2p\Omega$, $p > n$, therefore $A[\yop] \in \sob1p\Omega$, and $\divg{\del{A[\yop]}} \in L^p(\Omega)$, then invoking \lemref{lem:regularity_non_div}, with $q=p$, we obtain the desired result.
\end{proof}
\begin{corollary}[regularity of the optimal control]
\label{cor:regularity_control}
Let $\uop$ denote a local optimal control. In view of \eqref{eq:control_weak} we have
\begin{equation*}
\begin{split}
  \uop &= -\frac{\adjop}{\alpha}, \qquad\qquad \text{if }~ \adjop + \alpha \uop = 0, \\
  \uop &= -\frac{\theta\adjop}{\norm{\adjop}_{L^p(\Omega)}}, \quad \text{if }~ \adjop + \alpha \uop \neq 0.
\end{split}
\end{equation*}
Then invoking Corollary~\ref{cor:regularity_adjoint} and the Sobolev embedding theorem we deduce that $\uop \in W^2_2(\Omega)$ and further if $y_d \in L^p(\Omega)$, $p>n$, then $\uop \in \sob2p\Omega \subset\subset \sob1\infty\Omega$.
\end{corollary}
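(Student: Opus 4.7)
The plan is to interpret the variational inequality \eqref{eq:control_weak} as a projection-type characterization of $\uop$ inside the $L^p$-ball $U_{ad}$, derive the two announced pointwise formulas, and then transfer the regularity of $\adjop$ supplied by Corollary~\ref{cor:regularity_adjoint} to $\uop$.

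Step 1 (pointwise representation). First I would rewrite \eqref{eq:control_weak} as
\[
    \pair{\adjop+\alpha\uop,\,u-\uop}_{L^2(\Omega),L^2(\Omega)} \ge 0 \quad \forall u \in U_{ad},
\]
so that $\uop \in U_{ad}$ minimizes the linear functional $u \mapsto \pair{\adjop+\alpha\uop,\,u}_{L^2(\Omega),L^2(\Omega)}$ over the closed convex set $U_{ad}$. If $\adjop+\alpha\uop = 0$ a.e.\ in $\Omega$, the claim $\uop = -\adjop/\alpha$ is immediate. Otherwise the minimum of a nonzero linear functional on the $L^p$-ball of radius $\theta$ must be attained on the constraint sphere $\norm{\uop}_{L^p(\Omega)} = \theta$, and I would invoke a standard Lagrange-multiplier argument (equivalently, the equality case of H\"older's inequality) to identify $\uop$ as a scalar rescaling of $\adjop$, arriving at the stated formula $\uop = -\theta\adjop/\norm{\adjop}_{L^p(\Omega)}$.

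Step 2 (regularity transfer). In both cases the formula exhibits $\uop$ as a (deterministic) scalar multiple of $\adjop$, so no differential structure is lost. By Corollary~\ref{cor:regularity_adjoint}, $\adjop \in \sob22\Omega \cap \sobZ12\Omega$ unconditionally, hence $\uop \in \sob22\Omega$. Under the additional hypothesis $y_d \in L^p(\Omega)$ with $p>n$, the same corollary upgrades $\adjop$ to $\sob2p\Omega$, and the Sobolev embedding $\sob2p\Omega \subset\subset \sob1\infty\Omega$ (valid for $p>n$) then completes the proof.

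The main obstacle will be the active-constraint identification in Step 1: the Lagrange-multiplier computation on an $L^p$-ball generically produces a rescaling involving the dual norm $\norm{\cdot}_{L^{p'}(\Omega)}$ together with a nonlinear pointwise factor of the form $\abs{\adjop}^{p'-2}\adjop$, so I would need to check carefully against the specific structure of \eqref{eq:control_weak}, in particular the $L^2$-pairing used there and the fact that the residual $\adjop+\alpha\uop$ (rather than $\adjop$ alone) defines the linear objective, in order to recover the cleaner scalar rescaling announced in the statement. Once the two formulas are in place, Step 2 is routine and only requires the already-established regularity of the adjoint.
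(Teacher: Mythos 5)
Your Step 2 is exactly what the paper does (the corollary's regularity claim is read off from the displayed formulas together with Corollary~\ref{cor:regularity_adjoint} and the embedding $\sob2p\Omega\subset\subset\sob1\infty\Omega$), and your first case $\adjop+\alpha\uop=0$ is trivially fine. The genuine gap is the second case of Step 1, which you explicitly defer (``I would need to check carefully\dots'') and which is the entire nontrivial content of the corollary's display. The variational inequality \eqref{eq:control_weak} says that $\uop$ minimizes $u\mapsto\pair{g,u}_{L^2(\Omega),L^2(\Omega)}$ over the $L^p$-ball $U_{ad}$ with the frozen density $g=\adjop+\alpha\uop$; equivalently, $\uop$ is the $L^2$-projection of $-\adjop/\alpha$ onto $U_{ad}$. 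The equality case of H\"older (or the KKT conditions for that projection) then forces $\uop$ to be a negative rescaling of $\abs{g}^{p'-1}\sgn g$, equivalently to satisfy the pointwise relation $\alpha\uop+\adjop+\mu\, p\abs{\uop}^{p-2}\uop=0$ for some multiplier $\mu\ge0$ with $\norm{\uop}_{L^p(\Omega)}=\theta$. For $p\neq2$ this is a nonlinear pointwise relation, not a radial scaling, and the direction it selects is governed by $\adjop+\alpha\uop$, not by $\adjop$ alone; so the route you sketch does not, as written, produce the announced formula $\uop=-\theta\adjop/\norm{\adjop}_{L^p(\Omega)}$ (it would for $p=2$, but the paper fixes $p>n\ge2$). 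Since your regularity transfer in Step 2 leans on having a formula that is linear in $\adjop$, leaving case 2 unresolved leaves the $W^2_2$ and $\sob2p\Omega$ conclusions unsupported in that case: from the implicit KKT relation you would still have to show that the resulting Nemytskii-type map $\adjop\mapsto\uop$ preserves second-order Sobolev regularity, which is not automatic and is not addressed in your proposal.

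For what it is worth, the paper itself supplies no more argument than the assertion you are trying to verify --- the corollary's statement doubles as its proof --- so your observation that the Lagrange-multiplier computation on an $L^p$-ball naturally produces a dual-norm expression with the factor $\abs{\cdot}^{p'-2}(\cdot)$, rather than the stated scalar rescaling of $\adjop$, is substantive. But to turn your proposal into a proof you must either carry that computation through and reconcile it with (or correct) the displayed formula, or give an independent argument expressing $\uop$ as an explicit function of $\adjop$ from which the claimed $W^2_2(\Omega)$ and $\sob2p\Omega$ regularity actually follows.
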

%

\subsection{Second Order Sufficient Conditions}

We investigate the second order behavior of the cost functional $\cJ$.  Starting from Assumption \ref{ass:ssc}, we build up several intermediate results that allow us to prove Corollary \ref{cor:quad_growth} which is a quadratic growth condition on $\cJ$ near the optimal solution $\uop$.
In order to carefully handle the $L^2$-$L^p$ norm discrepancy, we prove a Lipschitz continuity type result for $\cJ''$ in \lemref{lem:aux_estimate}.  This requires several intermediate results which are shown in Proposition~\ref{prop:frechet_derivatives}, \lemref{lem:A_Lipschitz} and \lemref{lem:Sp_Lipschitz}.

Since $U_{ad}$ is closed, we need to define a suitable set of admissible directions.
\begin{definition}\label{defn:convex_cone}
Given $u \in U_{ad}$, the convex cone $\cone{u}$ comprises all directions $h \in L^p(\Omega)$ such that $u + th \in U_{ad}$ for some $t > 0$, i.e.
\[
    \cone{u} := \set{ h \in L^p(\Omega) : u + th \in U_{ad}, \ \text{ for some } \ t > 0 } .
\]
\end{definition}

\begin{assumption}\label{ass:ssc}
We make the following standard assumption about the second order behavior of the cost functional:
\begin{equation}\label{eq:second_order_sufficient_condition}
    \cJ''(\uop) (u - \uop)^2 \ge \delta \norm{u-\uop}_{L^2(\Omega)}^2, \quad \forall u - \uop \in \cone{\uop}, \quad \text{for some fixed } \delta > 0.
\end{equation}
\end{assumption}

Our next goal is to prove the following crucial result:
\begin{corollary}[quadratic growth near a local optimal control]\label{cor:quad_growth}
Let the control $\uop \in U_{ad}$ satisfy the first order necessary optimality condition \eqref{eq:control_weak} and assume that \eqref{eq:second_order_sufficient_condition} holds. Then there exists an $\epsilon > 0$ such that, for all $u - \uop \in \cone{\uop}$ with $\norm{u-\uop}_{L^p(\Omega)} \le \epsilon$, we have
\begin{equation}  \label{eq:gradJ_quad_growth}
\pairLt{\cJ'(u) - \cJ'(\uop), u-\uop}{\Omega} \ge \frac{\delta}{2} \normLt{u - \uop}{\Omega}^2 ,
\end{equation}
and
\begin{equation}\label{eq:J_quad_growth}
\cJ(u) \ge \cJ(\uop) + \frac{\delta}{4} \norm{u - \uop}_{L^2(\Omega)}^2 .
\end{equation}
In particular, $\cJ$ has a local optimal control (see Definition~\ref{def:optimal_control}) in $U_{ad}$ at $\uop$.
\end{corollary}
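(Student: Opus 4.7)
The plan is to combine a second-order Taylor expansion of $\cJ$ around $\uop$ with Assumption~\ref{ass:ssc} and the Lipschitz-type bound on $\cJ''$ from Lemma~\ref{lem:aux_estimate}, using the $L^p$-smallness of $u-\uop$ to absorb the remainder into the $L^2$-coercivity constant $\delta$.

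First, for \eqref{eq:gradJ_quad_growth}, I would use the integral form of Taylor's theorem for $\cJ'$ and write
\[
\pairLt{\cJ'(u) - \cJ'(\uop), u-\uop}{\Omega} = \int_0^1 \cJ''(u_s)(u-\uop)^2\,ds, \qquad u_s := \uop + s(u-\uop),
\]
and then split
\[
= \cJ''(\uop)(u-\uop)^2 + \int_0^1 \bigl[\cJ''(u_s) - \cJ''(\uop)\bigr](u-\uop)^2\,ds.
\]
Since $u-\uop \in \cone{\uop}$, the first term is bounded below by $\delta\normLt{u-\uop}{\Omega}^2$ by Assumption~\ref{ass:ssc}. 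Lemma~\ref{lem:aux_estimate} (the $L^p$--$L^2$ norm-discrepancy estimate) controls the integrand of the second term by $C\|u_s-\uop\|_{L^p(\Omega)}\normLt{u-\uop}{\Omega}^2 = Cs\|u-\uop\|_{L^p(\Omega)}\normLt{u-\uop}{\Omega}^2$. Integrating in $s$ and choosing $\epsilon$ so that $\tfrac{C}{2}\epsilon \le \tfrac{\delta}{2}$ then yields \eqref{eq:gradJ_quad_growth}.

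Next, for \eqref{eq:J_quad_growth}, I would integrate $\cJ'$ along the segment from $\uop$ to $u$:
\[
\cJ(u) - \cJ(\uop) = \pairLt{\cJ'(\uop), u-\uop}{\Omega} + \int_0^1 \pairLt{\cJ'(u_s) - \cJ'(\uop), u-\uop}{\Omega}\,ds.
\]
By \eqref{eq:control_weak}, the first term is nonnegative. For the second, convexity of $U_{ad}$ ensures that each $u_s \in U_{ad}$, so $u_s-\uop = s(u-\uop) \in \cone{\uop}$ with $\|u_s - \uop\|_{L^p(\Omega)} \le \epsilon$; applying \eqref{eq:gradJ_quad_growth} to $u_s$ and dividing by $s$ gives $\pairLt{\cJ'(u_s)-\cJ'(\uop), u-\uop}{\Omega} \ge \tfrac{\delta s}{2}\normLt{u-\uop}{\Omega}^2$. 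Integrating in $s$ produces the factor $\tfrac{\delta}{4}$ in \eqref{eq:J_quad_growth}. The final local-optimality claim follows immediately from \eqref{eq:J_quad_growth} restricted to the $L^p$-ball of radius $\epsilon$ around $\uop$ intersected with $U_{ad}$.

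The main obstacle is exactly the $L^2$--$L^p$ norm discrepancy: the constant $\delta$ in Assumption~\ref{ass:ssc} controls $\normLt{u-\uop}{\Omega}^2$, whereas Lipschitz continuity of $\cJ''$ (obtained upstream through Lemmas~\ref{lem:A_Lipschitz} and \ref{lem:Sp_Lipschitz}, together with the Sobolev embedding $\sob2p\Omega \hookrightarrow \sob1\infty\Omega$ valid for $p > n$) is naturally phrased in $L^p$. Lemma~\ref{lem:aux_estimate} is precisely the ingredient that produces the hybrid bound $C\|u_s-\uop\|_{L^p(\Omega)}\normLt{u-\uop}{\Omega}^2$, with only one factor in the stronger $L^p$ norm; once this estimate is in hand, the absorption arithmetic above goes through cleanly and the corollary reduces to bookkeeping.
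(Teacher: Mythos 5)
Your proposal is correct, but it is organized in the reverse order from the paper and is in one respect leaner. The paper first proves the quadratic growth \eqref{eq:J_quad_growth}: it Taylor-expands $\cJ(u)$ around $\uop$ with a Lagrange remainder $\frac12\cJ''(tu+(1-t)\uop)(u-\uop)^2$, replaces $\cJ''$ at the intermediate point by $\cJ''(\uop)$ using Assumption~\ref{ass:ssc} and \lemref{lem:aux_estimate}, and drops the term $\pair{\cJ'(\uop),u-\uop}\ge 0$ via \lemref{lem:first_order_opt}; it then obtains \eqref{eq:gradJ_quad_growth} by symmetrizing, i.e.\ repeating the expansion with $u$ and $\uop$ interchanged (which requires the auxiliary \lemref{lem:second_order_sufficient_condition_nbd}, giving coercivity of $\cJ''(u)$ for $u$ in an $L^p$-neighborhood of $\uop$) and adding the two resulting inequalities, with $\epsilon=\delta/(8L)$. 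You instead prove \eqref{eq:gradJ_quad_growth} directly by writing $\pairLt{\cJ'(u)-\cJ'(\uop),u-\uop}{\Omega}$ as an integral of $\cJ''(u_s)(u-\uop)^2$ along the segment, using Assumption~\ref{ass:ssc} only at $\uop$ and absorbing the $s$-integrated remainder from \lemref{lem:aux_estimate}; you then deduce \eqref{eq:J_quad_growth} by integrating \eqref{eq:gradJ_quad_growth} applied at $u_s$ (the homogeneity in $s$ correctly produces the factor $\delta/4$), with the first-order term handled by \eqref{eq:control_weak} exactly as in the paper. The payoff of your route is that \lemref{lem:second_order_sufficient_condition_nbd} is never needed and the admissible $\epsilon$ is slightly larger (of order $\delta/(2L)$ rather than $\delta/(8L)$); the cost is the mild additional regularity bookkeeping needed to justify the integral form of Taylor's theorem along the segment, which is available here since $\cS$, hence $\cJ$, is twice continuously Fr\'echet differentiable on the convex set $U_{ad}\subset U$ (\thmref{thm:state_strong}). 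Both arguments hinge on the same key ingredient, the $L^2$--$L^p$ discrepancy estimate of \lemref{lem:aux_estimate}, and you identify that correctly.
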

The proof requires a non-trivial estimate which we will prove in \lemref{lem:aux_estimate}. Such an estimate is needed to deal with the so-called \emph{2-norm discrepancy}, we refer to \cite{ECasas_FTroeltzsch_2012a} for further reading on the subject. We will conclude this section with a proof of Corollary~\ref{cor:quad_growth}.

\begin{proposition}\label{prop:frechet_derivatives}
For every $u \in U$ and every $h_1, h_2 \in L^p(\Omega)$ the first and second order
Fr\'echet derivatives $\cS'(u)h_1 \in \sob2p\Omega$ and $\cS''(u)h_1 h_2 \in \sob2p\Omega$ at $\cS(u) \in Y$ satisfy
\begin{align}
    -\divg{\del{ A[\cS(u)] \grad \cS'(u)h_1}} &= h_1 \quad \mbox{ in } \Omega, \qquad
                     \cS'(u)h_1 = 0 \quad \mbox{ on } \bdy\Omega
                     \label{eq:first_Frechet_derivative} \\
    -\divg{\del{ A[\cS(u)] \grad \cS''(u)h_1 h_2}} &= \divg{ \del{ D_u A[\cS(u)] \pair{h_2} \grad \cS'(u)h_1 }} \quad \mbox{ in } \Omega, \qquad
                     \cS''(u)h_1 h_2 = 0 \quad \mbox{ on } \bdy\Omega
                     \label{eq:second_Frechet_derivative}
\end{align}
where $A[\cdot]$ is given in \eqref{eqn:adjoint_coef_A}, and
\begin{align}
    \normS{\cS'(u)h_1}12\Omega &\le C(n,p,\Omega) \normS{h_1}{-1}2\Omega , \qquad
    \normS{\cS'(u)h_1}2p\Omega \le  C(n,p,\Omega) \norm{h_1}_{L^p(\Omega)}
     \label{eq:first_Frechet_derivative_estimate} \\
    \normS{\cS''(u)h_1 h_2}12\Omega &\le C(n,p,B_1,\Omega) \normS{h_1}{-1}2\Omega \norm{h_2}_{L^p(\Omega)} \label{eq:second_Frechet_derivative_estimate}.
\end{align}
\end{proposition}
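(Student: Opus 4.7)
The plan is to derive the two derivative equations directly from Theorem~\ref{thm:state_strong} (which already gives us the twice continuous Fréchet differentiability of $\cS$) and then to establish the regularity estimates by combining divergence-form and non-divergence-form elliptic theory with the Lipschitz/continuity estimates already developed.

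\textbf{First derivative.} From Theorem~\ref{thm:state_strong} we have $\cS'(u) = -[D_y\cN(\cS(u),u)]^{-1} \circ D_u\cN(\cS(u),u)$. Since $\cN(y,u) = \cB(y,\cdot) - \varphi_u(\cdot)$ with $\varphi_u$ linear in $u$, a direct calculation gives $D_u\cN(y,u) = -\cI$. Plugging the expression for $D_y\cN$ from Lemma~\ref{lem:Frechet_N_W2p} then yields \eqref{eq:first_Frechet_derivative} after factoring out $1/\cQ(\cS(u))$ to produce $A[\cS(u)]$. For the $H^1$-estimate, I would note that by Lemma~\ref{eq:positive_coeff} and the uniform upper bound $\cQ(\cS(u)) \le (1+C_S^2 B_1^2)^{1/2}$, the matrix $A[\cS(u)]$ is uniformly elliptic with constants depending only on $n, p, B_1, \Omega$; standard Lax--Milgram theory applied to \eqref{eq:first_Frechet_derivative} delivers the first bound in \eqref{eq:first_Frechet_derivative_estimate}. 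For the $W^2_p$-estimate I would rewrite the equation in non-divergence form
\[
-A[\cS(u)] : \Dif^2 \cS'(u)h_1 - \divg{A[\cS(u)]} \cdot \grad \cS'(u)h_1 = h_1,
\]
observe that since $\cS(u) \in \sob2p\Omega$ with $p>n$ (so $\sob1p\Omega$ is a Banach algebra), we have $A[\cS(u)] \in \sob1p\Omega$ and $\divg{A[\cS(u)]} \in L^p(\Omega)$, and then apply Lemma~\ref{lem:regularity_non_div} with $q=r=p$ and ${\bf b} = \divg{A[\cS(u)]}$ to conclude the second bound.

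\textbf{Second derivative.} Differentiating \eqref{eq:first_Frechet_derivative} with respect to $u$ in direction $h_2$ (using the chain rule and $D_u A[\cS(u)]\pair{h_2} = D_y A[\cS(u)][\cS'(u)h_2]$) produces \eqref{eq:second_Frechet_derivative}. For the estimate \eqref{eq:second_Frechet_derivative_estimate} I would show that the right-hand side lies in $\sob{-1}2\Omega$: the quantity $D_y A[\cS(u)]$ is a smooth algebraic function of $\cS(u)$ and $\grad \cS(u)$, so $D_u A[\cS(u)]\pair{h_2}$ is bounded in $L^\infty(\Omega)$ by $C\,\normS{\cS'(u)h_2}1\infty\Omega \le C\,\normS{\cS'(u)h_2}2p\Omega \le C\,\norm{h_2}_{L^p(\Omega)}$ via the Sobolev embedding $\sob2p\Omega \hookrightarrow \sob1\infty\Omega$ and the first derivative bound. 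The product $D_u A[\cS(u)]\pair{h_2}\,\grad \cS'(u)h_1$ is then in $L^2(\Omega)$ with norm $\lesssim \norm{h_2}_{L^p(\Omega)}\normS{\cS'(u)h_1}12\Omega \lesssim \norm{h_2}_{L^p(\Omega)}\normS{h_1}{-1}2\Omega$, so its divergence is controlled in $\sob{-1}2\Omega$ by the same quantity. Applying once more the uniform ellipticity of $A[\cS(u)]$ to the divergence-form equation \eqref{eq:second_Frechet_derivative} yields \eqref{eq:second_Frechet_derivative_estimate}. The membership $\cS''(u)h_1 h_2 \in \sob2p\Omega$ follows by the same non-divergence-form rewriting, provided $h_1, h_2 \in L^p(\Omega)$ (so that the analogous $L^p$ bound for $\cS'(u)h_1$ combined with the Banach-algebra structure keeps the right-hand side in $L^p$).

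\textbf{Main obstacle.} The delicate step is the second-derivative estimate, because we must measure $h_1$ in the weak norm $\sob{-1}2\Omega$ while $h_2$ is measured in $L^p$, reflecting the 2-norm discrepancy that will be exploited in Lemma~\ref{lem:aux_estimate}. The asymmetry is what forces the split: one factor ($\cS'(u)h_1$) only gives an $H^1$-bound, while the other factor ($D_u A[\cS(u)]\pair{h_2}$) must be controlled in $L^\infty$, which is exactly where the Sobolev embedding $\sob2p\Omega \hookrightarrow \sob1\infty\Omega$ (valid because $p>n$) is essential. All other steps are routine once one has Lemma~\ref{lem:regularity_non_div}, the uniform positivity of $A[\cS(u)]$, and the Lipschitz estimates from Lemma~\ref{lem:S_Lipschitz}.
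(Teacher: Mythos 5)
Your proposal is correct and follows essentially the same route as the paper's (much terser) proof: differentiate the state equation $-\divg{\frac{\grad \cS(u)}{\cQ(\cS(u))}} = u$ with respect to $u$ (equivalently, use the implicit-function-theorem formula from \thmref{thm:state_strong}), obtain the $\sob12\Omega$ bounds from the $\sob{-1}2\Omega$ characterization/energy estimate, obtain the $\sob2p\Omega$ bound from \lemref{lem:regularity_non_div} after rewriting in non-divergence form, and combine these with the embedding $\sob2p\Omega \hookrightarrow \sob1\infty\Omega$ to handle the $L^\infty$ control of $D_u A[\cS(u)]\pair{h_2}$ in the second-derivative estimate. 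The only difference is that you spell out the details (uniform ellipticity of $A$, the Banach-algebra argument, the $L^\infty$--$L^2$ splitting) that the paper leaves implicit.
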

\begin{proof}
In terms of the control to state map, \eqref{eq:state} can be written as $-\divg{\frac{\grad \cS(u)}{\cQ(\cS(u))}} = u$.
Since the control to state map is twice Fr\'echet differentiable, then differentiating with respect to $u$ in the directions $h_1$ and $h_2$ leads to \eqref{eq:first_Frechet_derivative} and \eqref{eq:second_Frechet_derivative}. 
The first inequality in \eqref{eq:first_Frechet_derivative_estimate} is due to the characterization of $\sob{-1}2\Omega$ functions \cite[P.~283, Theorem 1]{LCEvans_1998a} and the second inequality is due to \lemref{lem:regularity_non_div}.  Using both of these results, in conjunction with the Sobolev embedding $\sob2p\Omega \subset\subset \sob1\infty\Omega$ for $p>n$, gives \eqref{eq:second_Frechet_derivative_estimate}.
\end{proof}
\begin{lemma}[$A$ is Lipschitz]\label{lem:A_Lipschitz}
If $u_1, u_2 \in U$, with $u_1 \neq u_2$, the map $A : Y \rightarrow v \oplus \sobZ1p\Omega$ in \eqref{eq:adjoint_strong} satisfies
\begin{align}
    \norm{A[\cS(u_1)] - A[\cS(u_2)]}_{L^\infty(\Omega)} &\le C(n,p,B_1,\Omega) \normS{\cS(u_1) - \cS(u_2)}1\infty\Omega \label{eq:A_Lipschitz_Linf} , \\
    \normLt{A[\cS(u_1)] - A[\cS(u_2)]}\Omega &\le C(n,p,B_1,\Omega) \normS{\cS(u_1)-\cS(u_2)}12\Omega \label{eq:A_Lipschitz_L2} ,
\end{align}
and for $h_1 \in L^p(\Omega)$, $\cS' : U \rightarrow \cL\del{L^p(\Omega),Y}$:
\begin{equation}\label{eq:DuA_Lipschitz_L2}
 \norm{D_u\del{A[\cS(u_1)] - A[\cS(u_2)]}\pair{h_1}}_{L^2(\Omega)}
         \le C(n,p,B_1,\Omega) \normS{\del{\cS'(u_1)-\cS'(u_2)}h_1}12\Omega .
\end{equation}
\end{lemma}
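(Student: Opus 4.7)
The plan is to reduce every term in \eqref{eq:A_Lipschitz_Linf}--\eqref{eq:DuA_Lipschitz_L2} to a pointwise algebraic identity for the matrix $A[y]$ (and, for \eqref{eq:DuA_Lipschitz_L2}, its $y$-derivative), and then read off each norm estimate using the Lipschitz properties of $\cQ$ and $1/\cQ$ already established in the proof of \lemref{lem:Frechet_N_W1inf}, together with the uniform bounds coming from $y_i := \cS(u_i) \in \mathbb{B}$ and the embedding $\sob2p\Omega \subset\subset \sob1\infty\Omega$ for $p > n$.

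For \eqref{eq:A_Lipschitz_Linf} and \eqref{eq:A_Lipschitz_L2} I would first telescope
\[
  A[y_1]-A[y_2] = \Bigl(\tfrac{1}{\cQ(y_1)}-\tfrac{1}{\cQ(y_2)}\Bigr)\cI - \biggl[\tfrac{\grad y_1\grad y_1^T}{\cQ(y_1)^3}-\tfrac{\grad y_2\grad y_2^T}{\cQ(y_2)^3}\biggr],
\]
and expand the bracket as
\[
  \tfrac{(\grad y_1-\grad y_2)\grad y_1^T + \grad y_2(\grad y_1-\grad y_2)^T}{\cQ(y_1)^3} + \grad y_2\grad y_2^T\Bigl(\tfrac{1}{\cQ(y_1)^3}-\tfrac{1}{\cQ(y_2)^3}\Bigr).
\]
The pointwise Lipschitz estimate on $\cQ$ in \eqref{eq:Q_Lip} and on $1/\cQ$ in \eqref{eqn:inverse_Q_Lipschitz}, together with $a^3-b^3=(a-b)(a^2+ab+b^2)$ applied to $1/\cQ(y_i)$, yield the pointwise inequality $\abs{A[y_1]-A[y_2]} \le C\,\abs{\grad(y_1-y_2)}$ on $\Omega$, where $C$ depends only on $L^\infty$ bounds of $\abs{\grad y_i}$, $\cQ(y_i)$ and $1/\cQ(y_i)$. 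These $L^\infty$ bounds follow from $y_i \in \mathbb{B}$ and $\sob2p\Omega \subset\subset \sob1\infty\Omega$. Taking the $L^\infty$ norm of this pointwise inequality yields \eqref{eq:A_Lipschitz_Linf}; taking the $L^2$ norm yields \eqref{eq:A_Lipschitz_L2}.

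For \eqref{eq:DuA_Lipschitz_L2} I would first apply the chain rule, $D_u A[\cS(u)]\pair{h_1} = D_y A[\cS(u)]\pair{\cS'(u)h_1}$, and note that $D_y A[y]\pair{z}$ is linear in $\grad z$ with coefficients built from $\grad y$ and $1/\cQ(y)$. Setting $z_i := \cS'(u_i) h_1$ and telescoping,
\[
  D_u A[\cS(u_1)]\pair{h_1} - D_u A[\cS(u_2)]\pair{h_1} = D_y A[\cS(u_1)]\pair{z_1-z_2} + \bigl(D_y A[\cS(u_1)]-D_y A[\cS(u_2)]\bigr)\pair{z_2}.
\]
The first term is bounded in $L^2$ by $C\,\normS{z_1-z_2}12\Omega$ because $\cS(u_1)\in\mathbb{B}$ makes the coefficients of $D_y A[\cS(u_1)]$ uniformly bounded in $L^\infty$; this matches the right-hand side of \eqref{eq:DuA_Lipschitz_L2}. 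The second term is handled by re-running the telescoping used for \eqref{eq:A_Lipschitz_L2}, now applied to the $y$-derivative of $A$, yielding a bound of the form $C\,\normS{\cS(u_1)-\cS(u_2)}1\infty\Omega\,\normLt{\grad z_2}\Omega$; by \eqref{eq:S_Lipschitz_pt2} and \eqref{eq:first_Frechet_derivative_estimate} this becomes $C\,\norm{u_1-u_2}_{L^p(\Omega)}\norm{h_1}_{L^p(\Omega)}$, which must be re-absorbed into $\normS{z_1-z_2}12\Omega$ by subtracting the two instances of the linearized equation \eqref{eq:first_Frechet_derivative} and using \eqref{eq:A_Lipschitz_Linf} together with \lemref{lem:regularity_non_div}.

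The main obstacle will be this last step of \eqref{eq:DuA_Lipschitz_L2}: the naive telescoping pairs the Lipschitz constant in $y$ with $\cS(u_1)-\cS(u_2)$, whereas the statement allows only $(\cS'(u_1)-\cS'(u_2))h_1$ on the right-hand side. Closing this gap requires differencing the two elliptic equations \eqref{eq:first_Frechet_derivative} for $z_1, z_2$, using \eqref{eq:A_Lipschitz_Linf} to bound the resulting coefficient discrepancy, and invoking \lemref{lem:regularity_non_div} to upgrade the bound to $\normS{z_1-z_2}12\Omega$. Throughout, one must track that all constants depend only on $n$, $p$, $B_1$ and $\Omega$, which is automatic from $y_i \in \mathbb{B}$ and the explicit form of $A$.
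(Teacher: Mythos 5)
Your handling of \eqref{eq:A_Lipschitz_Linf} and \eqref{eq:A_Lipschitz_L2} coincides with the paper's argument: telescope $A[\cS(u_1)]-A[\cS(u_2)]$, control the scalar factors through \eqref{eq:Q_Lip}, \eqref{eqn:inverse_Q_Lipschitz} and $a^3-b^3=(a-b)(a^2+ab+b^2)$, and use the uniform bounds coming from $\cS(u_i)\in\mathbb{B}$ and $\sob2p\Omega\subset\subset\sob1\infty\Omega$; for the remaining two estimates the paper itself only asserts that ``the same proof works.''

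The genuine gap is the final step of your argument for \eqref{eq:DuA_Lipschitz_L2}. With $y_i=\cS(u_i)$ and $z_i=\cS'(u_i)h_1$, your chain rule and telescoping correctly produce the two contributions
\[
\norm{D_yA[y_1]\pair{z_1-z_2}}_{L^2(\Omega)}\le C\normS{z_1-z_2}12\Omega
\quad\text{and}\quad
\norm{\del{D_yA[y_1]-D_yA[y_2]}\pair{z_2}}_{L^2(\Omega)}\le C\,\normS{y_1-y_2}1\infty\Omega\,\normS{z_2}12\Omega .
\]
You then propose to ``re-absorb'' the second contribution into $\normS{z_1-z_2}12\Omega$ by differencing the two equations \eqref{eq:first_Frechet_derivative}. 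But that differencing gives
\[
-\divg{\del{A[y_1]\grad\del{z_1-z_2}}}=\divg{\del{\del{A[y_1]-A[y_2]}\grad z_2}},
\]
whose energy estimate is $\normS{z_1-z_2}12\Omega\le C\,\normS{y_1-y_2}1\infty\Omega\,\normS{z_2}12\Omega$ --- this is precisely the content of \eqref{eq:Sp_Lipschitz}, i.e.\ an upper bound on $z_1-z_2$ in terms of the very quantity you want to absorb. The absorption would require the reverse inequality, which does not follow from anything at hand: the source term $\divg{\del{\del{A[y_1]-A[y_2]}\grad z_2}}$ can be small (even vanish, when the vector field $\del{A[y_1]-A[y_2]}\grad z_2$ is divergence free) without $\normS{y_1-y_2}1\infty\Omega\normS{z_2}12\Omega$ being small, so $\normS{z_1-z_2}12\Omega$ cannot control the second telescoping term. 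Invoking Lemma~\ref{lem:regularity_non_div} does not change this; it too only produces estimates of the same orientation. What your computation actually establishes is the two-term bound
\[
\norm{D_u\del{A[\cS(u_1)]-A[\cS(u_2)]}\pair{h_1}}_{L^2(\Omega)}
\le C\del{\normS{\del{\cS'(u_1)-\cS'(u_2)}h_1}12\Omega
+\normS{\cS(u_1)-\cS(u_2)}1\infty\Omega\,\normS{\cS'(u_2)h_1}12\Omega},
\]
and that is all that is needed where the lemma is applied: in Lemma~\ref{lem:aux_estimate} the extra term reduces, via \eqref{eq:S_Lipschitz_pt2} and \eqref{eq:first_Frechet_derivative_estimate}, to $C\norm{u_1-u_2}_{L^p(\Omega)}\normLt{h_1}\Omega$, which is exactly of the same form as the contribution obtained from \eqref{eq:DuA_Lipschitz_L2} combined with \eqref{eq:Sp_Lipschitz}. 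So the correct repair is to carry (or add to the statement) this extra term and bound it directly, not to attempt the re-absorption; as written, that step of your proposal would fail.
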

\begin{proof}
Recall $y_1 = \cS(u_1)$ and $y_2 = \cS(u_2)$, for simplicity we will use this notation in the proof. It is enough to show \eqref{eq:A_Lipschitz_Linf}, the same proof works for \eqref{eq:A_Lipschitz_L2} and \eqref{eq:DuA_Lipschitz_L2}. Now
\[
    \norm{A[y_1] - A[y_2]}_{L^\infty(\Omega)}
     \le \norm{\frac{1}{\cQ(y_1)}-\frac{1}{\cQ(y_2)}}_{L^\infty(\Omega)}
      + \norm{ \frac{\grad y_1 \grad y^T_1}{\cQ(y_1)^3}
         - \frac{\grad y_2 \grad y^T_2}{\cQ(y_2)^3}}_{L^\infty(\Omega)} .
\]
We consider each term on the right hand side separately.  For the first term, we recall \eqref{eqn:inverse_Q_Lipschitz}.  Invoking the triangle inequality on the second term leads to
\begin{align*}
    \norm{ \frac{\grad y_1 \grad y^T_1}{\cQ(y_1)^3} - \frac{\grad y_2 \grad y^T_2}{\cQ(y_2)^3}}_{L^\infty(\Omega)}
    &\le \norm{\frac{\grad y_1 \grad y^T_1 \cQ(y_2)^3-\grad y_2 \grad y_2^T \cQ(y_1)^3}{Q(y_1)^3 \cQ(y_2)^3}}_{L^\infty(\Omega)} \\
    &\le \norm{\frac{\grad (y_1 - y_2)\grad y^T_1 \cQ(y_2)^3}{\cQ(y_1)^3 \cQ(y_2)^3}}_{L^\infty(\Omega)}
       + \norm{\frac{\grad y_2 \grad(y_1-y_2) \cQ(y_2)^3}{Q(y_1)^3 \cQ(y_2)^3}}_{L^\infty(\Omega)} \\
    &\quad + \norm{\frac{\grad y_2 \grad y_2^T (\cQ(y_2)^3 - \cQ(y_1)^3)}{\cQ(y_1)^3Q(y_2)^3}}_{L^\infty (\Omega)} \\
    &\le C \abs{y_1-y_2}_{\sob1\infty\Omega} ,
\end{align*}
where $C > 0$ is a generic uniform constant depending on $n,p,\Omega$ and $B_1$.
\end{proof}
\begin{lemma}[$\cS'$ is Lipschitz]\label{lem:Sp_Lipschitz}
Let $u, u_1, u_2 \in U$, and $h_1 \in L^p(\Omega)$. Then
$\cS' : U \rightarrow \cL\del{L^p(\Omega),Y}$ satisfies
\begin{align}\label{eq:Sp_Lipschitz}
    \normS{\del{\cS'(u_1) - \cS'(u_2)}h_1}12\Omega \le \norm{u_1-u_2}_{L^p(\Omega)} \normLt{h_1}\Omega .
\end{align}
\end{lemma}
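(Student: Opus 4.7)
The plan is to subtract the equations \eqref{eq:first_Frechet_derivative} satisfied by $\cS'(u_1)h_1$ and $\cS'(u_2)h_1$ and analyze the resulting linear divergence-form equation by an energy estimate. Setting $\delta := \del{\cS'(u_1)-\cS'(u_2)}h_1 \in \sobZ12\Omega$, one obtains
\[
    -\divg{\del{A[\cS(u_1)]\grad \delta}} = \divg{\del{\del{A[\cS(u_1)]-A[\cS(u_2)]}\grad \cS'(u_2)h_1}} \quad \mbox{ in } \Omega,
\]
with homogeneous Dirichlet boundary data. So the task reduces to combining the uniform ellipticity of $A[\cS(u_1)]$ with the Lipschitz continuity of $A$ from \lemref{lem:A_Lipschitz} and the a priori estimate for $\cS'(u_2)h_1$ from Proposition~\ref{prop:frechet_derivatives}.

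First I would check that $A[\cS(u_1)]$ is uniformly positive definite on $\mathbb{B}$; this is immediate because $A[y] = \cQ(y)^{-1}\del{\cI - \cQ(y)^{-2}\grad y\grad y^T}$ has eigenvalues $\cQ(y)^{-1}$ and $\cQ(y)^{-3}$, both bounded below on $\mathbb{B}$ in view of the Sobolev embedding $\sob2p\Omega \hookrightarrow \sob1\infty\Omega$ and the bound $\normS{y}2p\Omega \le B_1$. Testing the equation for $\delta$ against $\delta$ itself and using Cauchy--Schwarz then gives
\[
    c\,\normSZ{\delta}12\Omega \le \norm{A[\cS(u_1)]-A[\cS(u_2)]}_{L^\infty(\Omega)}\,\normSZ{\cS'(u_2)h_1}12\Omega,
\]
with $c>0$ depending only on $n,p,B_1,\Omega$.

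To finish, I would chain together three known estimates. The factor $\normSZ{\cS'(u_2)h_1}12\Omega$ is controlled by $\normS{h_1}{-1}2\Omega \le C\normLt{h_1}\Omega$ via \eqref{eq:first_Frechet_derivative_estimate}. For the $L^\infty$ norm of the difference of coefficients, \eqref{eq:A_Lipschitz_Linf} yields $\norm{A[\cS(u_1)]-A[\cS(u_2)]}_{L^\infty(\Omega)} \le C\,\normS{\cS(u_1)-\cS(u_2)}1\infty\Omega$, which is in turn bounded by $C\,\normS{\cS(u_1)-\cS(u_2)}2p\Omega$ thanks to the compact embedding $\sob2p\Omega \subset\subset \sob1\infty\Omega$ for $p>n$, and finally by $C\,\norm{u_1-u_2}_{L^p(\Omega)}$ by the Lipschitz estimate \eqref{eq:S_Lipschitz_pt2} of \lemref{lem:S_Lipschitz}. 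Multiplying these bounds produces \eqref{eq:Sp_Lipschitz}.

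The main subtlety, and the reason this argument needs the preparatory machinery, is the \emph{norm mismatch} between the $L^p$ norm on the right and the $W^1_2$ norm on the left: one must route the $L^p$-Lipschitz dependence of $\cS$ through the $W^2_p$ estimate of \lemref{lem:S_Lipschitz} in order to land in $L^\infty$ for the coefficient difference, while simultaneously keeping the $\cS'(u_2)h_1$ factor only in the $W^1_2$ norm so that the $L^2$ norm of $h_1$ suffices. Without the $W^2_p$-continuity of $\cS$ provided by \thmref{thm:fixed_point} and the ellipticity of $A$ on $\mathbb{B}$, the chain would not close.
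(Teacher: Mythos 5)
Your argument is correct and follows essentially the same route as the paper: subtract the two equations from Proposition~\ref{prop:frechet_derivatives}, estimate the resulting divergence-form problem for $\del{\cS'(u_1)-\cS'(u_2)}h_1$, and chain \eqref{eq:A_Lipschitz_Linf}, \eqref{eq:first_Frechet_derivative_estimate}, the embedding $\sob2p\Omega\hookrightarrow\sob1\infty\Omega$, and \eqref{eq:S_Lipschitz_pt2}. The only difference is cosmetic: you make the uniform ellipticity of $A[\cS(u_1)]$ on $\mathbb{B}$ explicit through its eigenvalues and an energy estimate, whereas the paper obtains the same a priori bound by invoking the characterization of $\sob{-1}2\Omega$ functions.
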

\begin{proof}
Consider the system satisfied by $\cS'(u_1)h_1$ and $\cS'(u_2)h_1$ from Proposition \ref{prop:frechet_derivatives}:
\begin{align*}
    -\divg{\del{ A[\cS(u_1)] \grad \cS'(u_1)h_1}} &= h_1 \quad \mbox{ in } \Omega, \qquad
                     \cS'(u_1)h_1 = 0 \quad \mbox{ on } \bdy\Omega \\
    -\divg{\del{ A[\cS(u_2)] \grad \cS'(u_2)h_1}} &= h_1 \quad \mbox{ in } \Omega, \qquad
                     \cS'(u_2)h_1 = 0 \quad \mbox{ on } \bdy\Omega
\end{align*}
On subtracting and rearranging
\begin{align*}
    -\divg{\del{ A[\cS(u_1)] \grad \del{\cS'(u_1) - \cS'(u_2)}h_1}}
   &= \divg{\del{A[\cS(u_1)]-A[\cS(u_2)] \grad \cS'(u_2)h_1 }}
               \quad \mbox{ in } \Omega \\
    \del{\cS'(u_1) - \cS'(u_2)} h_1
   &= 0 \quad \mbox{ on } \bdy\Omega  .
\end{align*}
Using the characterization of $\sob{-1}2\Omega$ functions \cite[P.~283, Theorem 1]{LCEvans_1998a} we deduce
\[
    \normS{\del{\cS'(u_1) - \cS'(u_2)}h_1}12\Omega
                \le C(\Omega) \norm{A[\cS(u_1)]-A[\cS(u_2)]}_{L^\infty(\Omega)}
                        \normS{\cS'(u_2)h_1}12\Omega .
\]
Using \eqref{eq:A_Lipschitz_Linf} and \eqref{eq:first_Frechet_derivative_estimate}, we obtain
\[
    \normS{\del{\cS'(u_1) - \cS'(u_2)}h_1}12\Omega
        \le C(n,p,B_1,\Omega) \normS{\cS(u_1)-\cS(u_2)}1\infty\Omega \normS{h_1}{-1}2\Omega .
\]
Using \eqref{eq:S_Lipschitz_pt2} and $\sob{-1}2\Omega \hookrightarrow L^2(\Omega)$ we get \eqref{eq:Sp_Lipschitz}.
\end{proof}

The treatment of the $L^2$-$L^p$ norm discrepancy requires a technical result.
This result makes use of the previous estimates in this section.
\begin{lemma}[auxiliary result for the $L^2$-$L^p$ norm discrepancy]
\label{lem:aux_estimate}
Let $u \in U$ and $y_d, h, h_1, h_2 \in L^p(\Omega)$.  Then there exists a constant
$L(n,p,B_1,\Omega)  > 0$ such that
\begin{align}\label{eq:auxiliary_estimate}
    \abs{\cJ''(u+h)\pair{h_1,h_2} - \cJ''(u)\pair{h_1,h_2}}
        \le L \del{ \norm{h}_{L^2(\Omega)}  \norm{h_2}_{L^p(\Omega)}
                  + \norm{h}_{L^p(\Omega)}  \norm{h_2}_{L^2(\Omega)} } \norm{h_1}_{L^2(\Omega)} .
\end{align}
\end{lemma}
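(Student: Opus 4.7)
My plan is to expand using the explicit formula
$\cJ''(u)\pair{h_1,h_2} = \pair{\cS'(u)h_1,\cS'(u)h_2}_{L^2(\Omega),L^2(\Omega)} + \pair{\cS(u)-y_d,\cS''(u)h_1 h_2}_{L^2(\Omega),L^2(\Omega)} + \alpha\pair{h_1,h_2}_{L^2(\Omega),L^2(\Omega)}$,
so that the regularization part cancels in the difference $\cJ''(u+h)\pair{h_1,h_2} - \cJ''(u)\pair{h_1,h_2} = T_1 + T_{2,1} + T_{2,2}$: here $T_1$ is the difference of the first summand, and $T_2$ is split via $\pair{\cS(u)-y_d,\cS''(u+h)h_1 h_2}_{L^2(\Omega),L^2(\Omega)}$ into $T_{2,1}=\pair{\cS(u+h)-\cS(u),\cS''(u+h)h_1 h_2}_{L^2(\Omega),L^2(\Omega)}$ plus $T_{2,2}=\pair{\cS(u)-y_d,(\cS''(u+h)-\cS''(u))h_1 h_2}_{L^2(\Omega),L^2(\Omega)}$. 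Telescoping $T_1$ through $\pair{\cS'(u+h)h_2,\cS'(u)h_1}_{L^2(\Omega),L^2(\Omega)}$ and applying Cauchy--Schwarz with \eqref{eq:Sp_Lipschitz} and \eqref{eq:first_Frechet_derivative_estimate} gives $|T_1|\le C\norm{h}_{L^p(\Omega)}\norm{h_2}_{L^2(\Omega)}\norm{h_1}_{L^2(\Omega)}$, while Cauchy--Schwarz combined with \eqref{eq:S_Lipschitz} and \eqref{eq:second_Frechet_derivative_estimate} produces $|T_{2,1}|\le C\norm{h}_{L^2(\Omega)}\norm{h_1}_{L^2(\Omega)}\norm{h_2}_{L^p(\Omega)}$. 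Both of these contributions already fit the right-hand side of \eqref{eq:auxiliary_estimate}.

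The main obstacle is $T_{2,2}$: a direct $L^2(\Omega)$ estimate of $z_1-z_2:=(\cS''(u+h)-\cS''(u))h_1 h_2$ from its PDE forces an $L^\infty$ bound on $\grad z_2$, which by Sobolev embedding requires a $\sob2p\Omega$ estimate on $z_2$ and hence $\norm{h_1}_{L^p(\Omega)}$ rather than $\norm{h_1}_{L^2(\Omega)}$, violating the mixed $L^2$--$L^p$ structure of \eqref{eq:auxiliary_estimate}. To bypass this I would invoke duality: let $\tilde\zeta\in\sob2p\Omega\cap\sobZ1p\Omega$ solve $-\divg{(A[\cS(u+h)]\grad\tilde\zeta)}=\cS(u)-y_d$ in $\Omega$ with $\tilde\zeta=0$ on $\bdy\Omega$. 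Rewriting in non-divergence form and applying \lemref{lem:regularity_non_div} (the coefficient $A[\cS(u+h)]\in L^\infty(\Omega)$ is uniformly elliptic and $\divg{A[\cS(u+h)]}\in L^p(\Omega)$ is bounded uniformly for $u,u+h\in U$), together with the embedding $\sob2p\Omega\hookrightarrow W^{1,\infty}(\Omega)$ for $p>n$, yields $\normLi{\grad\tilde\zeta}\Omega\le C$ independently of $h$.

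Subtracting the two instances of \eqref{eq:second_Frechet_derivative} satisfied by $z_1$ and $z_2$ and rearranging produces $-\divg{(A[\cS(u+h)]\grad(z_1-z_2))}=\divg{F}$ with $F=F_1+F_2+F_3$, where $F_1=(A[\cS(u+h)]-A[\cS(u)])\grad z_2$, $F_2=(D_u A[\cS(u+h)]\pair{h_2}-D_u A[\cS(u)]\pair{h_2})\grad\cS'(u+h)h_1$, and $F_3=D_u A[\cS(u)]\pair{h_2}\grad((\cS'(u+h)-\cS'(u))h_1)$. Since $A[\cdot]$ is symmetric, testing the weak form for $z_1-z_2$ against $\tilde\zeta$ yields $T_{2,2}=-\int_\Omega F\cdot\grad\tilde\zeta$, so $|T_{2,2}|\le\normLi{\grad\tilde\zeta}\Omega\normLo{F}\Omega\le C\normLo{F}\Omega$. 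I would then bound each $F_i$ in $L^1(\Omega)$ by Cauchy--Schwarz in $L^2(\Omega)$: $F_1$ via \eqref{eq:A_Lipschitz_L2}, \eqref{eq:S_Lipschitz} and \eqref{eq:second_Frechet_derivative_estimate} produces $C\norm{h}_{L^2(\Omega)}\norm{h_1}_{L^2(\Omega)}\norm{h_2}_{L^p(\Omega)}$; $F_2$ via \eqref{eq:DuA_Lipschitz_L2}, \eqref{eq:Sp_Lipschitz} and \eqref{eq:first_Frechet_derivative_estimate} produces $C\norm{h}_{L^p(\Omega)}\norm{h_2}_{L^2(\Omega)}\norm{h_1}_{L^2(\Omega)}$; and $F_3$ combines the chain-rule estimate $\norm{D_u A[\cS(u)]\pair{h_2}}_{L^2(\Omega)}\le C\norm{h_2}_{L^2(\Omega)}$ (which follows from the uniform $W^{1,\infty}$ bound on $\cS(u)$ via the chain rule applied to $A[y]$) with \eqref{eq:Sp_Lipschitz} to give the same form. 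Summing all contributions yields \eqref{eq:auxiliary_estimate}.
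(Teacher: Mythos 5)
Your proposal is correct and follows essentially the same route as the paper: the same splitting of $\cJ''(u+h)-\cJ''(u)$ into the three contributions, the same use of \eqref{eq:Sp_Lipschitz}, \eqref{eq:first_Frechet_derivative_estimate}, \eqref{eq:S_Lipschitz}, \eqref{eq:second_Frechet_derivative_estimate} for the first two, and the same duality trick for the delicate third term --- the paper tests the subtracted equation for $\del{\cS''(u+h)-\cS''(u)}h_1h_2$ against the adjoint-type variable $\adjoint$ solving \eqref{eq:adjoint_strong} with coefficient $A[\cS(u)]$ and right-hand side $\cS(u)-y_d$, whose $\sob1\infty\Omega$ bound comes from Corollary~\ref{cor:regularity_adjoint} (i.e.\ \lemref{lem:regularity_non_div}), exactly as you do with your $\tilde\zeta$, the only cosmetic difference being that you place the coefficient $A[\cS(u+h)]$ rather than $A[\cS(u)]$ in the dual problem. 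Your $F_1,F_2,F_3$ bounds and the chain-rule estimate for $D_uA[\cS(u)]\pair{h_2}$ match the paper's treatment of its terms, so no gap.
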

\begin{proof}
Using the reduced cost functional \eqref{eq:reduced_cost}, a simple calculation gives
\begin{align*}
    \cJ''(u+h)\pair{h_1,h_2} - \cJ''(u)\pair{h_1,h_2}
     &= \int_\Omega \del{\cS'(u+h)^2 - \cS'(u)^2} h_1 h_2 \\
     &\quad +\int_\Omega \sbr{\del{\cS(u+h)-y_d} \cS''(u+h)
                 - \del{\cS(u)-y_d} \cS''(u)}h_1 h_2 \\
     &= \int_\Omega \del{\cS'(u+h) - \cS'(u)} h_1 \del{\cS'(u+h) + \cS'(u)}  h_2  \\
     &\quad  +\int_\Omega \sbr{\del{\cS(u+h)-\cS(u)} \cS''(u+h)
             +  (\cS(u) - y_d) \del{\cS''(u+h) - \cS''(u)} } h_1 h_2.
\end{align*}
Using the triangle inequality and Cauchy-Schwarz, we have
\begin{align*}
    \abs{\cJ''(u+h)\pair{h_1,h_2} - \cJ''(u)\pair{h_1,h_2}}
        &\le \normLt{\del{\cS'(u+h) - \cS'(u)}h_1}\Omega
             \normLt{\del{\cS'(u+h) + \cS'(u)}h_2}\Omega   \\
        &\quad + \normLt{\cS(u+h)-\cS(u)}\Omega \normLt{\cS''(u+h)h_1 h_2}\Omega  \\
        &\quad + \abs{\int_\Omega (\cS(u) - y_d) \del{\cS''(u+h) - \cS''(u)} h_1 h_2 } \\
        &= \textsf{I} + \textsf{II} + \textsf{III} .
\end{align*}

We will estimate each term $\textsf{I}-\textsf{III}$ individually. In view of \eqref{eq:Sp_Lipschitz}, \eqref{eq:first_Frechet_derivative_estimate}
\[
    \textsf{I} \le C(n,p,B_1,\Omega)  \norm{h}_{L^p(\Omega)} \normLt{h_1}\Omega \normLt{h_2}\Omega
\]
and using \eqref{eq:S_Lipschitz} and \eqref{eq:second_Frechet_derivative_estimate}
\[
    \textsf{II}  \le C(n,p,B_1,\Omega)  \normLt{h}\Omega \normLt{h_1}\Omega \norm{h_2}_{L^p(\Omega)} .
\]
The estimate for $\textsf{III}$ is more involved.  Recall \eqref{eq:second_Frechet_derivative}, namely the system satisfied by $\cS''(u+h)h_1 h_2$ and $\cS''(u)h_1 h_2$:
\begin{align*}
    -\divg{\del{ A[\cS(u+h)] \grad \cS''(u+h)h_1h_2}} &= \divg{\del{D_u A[\cS(u+h)]\pair{h_2}\grad \cS'(u+h)h_1}} ~ \mbox{ in } \Omega, ~
                     \cS''(u+h)h_1h_2 = 0 ~ \mbox{ on } \bdy\Omega,    \\
    -\divg{\del{ A[\cS(u)] \grad \cS''(u)h_1h_2}} &= \divg{\del{D_u A[\cS(u)]\pair{h_2}\grad \cS'(u)h_1}} ~ \mbox{ in } \Omega, ~
                     \cS''(u)h_1h_2 = 0 ~ \mbox{ on } \bdy\Omega.
\end{align*}
On subtracting and rearranging, we obtain
\begin{align*}
    -\divg{\del{ A[\cS(u)] \grad \del{\cS''(u) - \cS''(u+h)}h_1h_2}}
    &= \divg{\del{\del{ A[\cS(u)] - A[\cS(u+h)]} \grad \cS''(u+h)h_1h_2}} \\
    & + \divg{\del{D_u A[\cS(u)]\pair{h_2}\grad \cS'(u)h_1
      - D_u A[\cS(u+h)]\pair{h_2}\grad \cS'(u+h)h_1}} ,
\end{align*}

For $u \in U$, we denote the variable satisfying \eqref{eq:adjoint_strong} by $\adjoint$, with right hand side $\cS(u) - y_d$. We further deduce
\begin{align*}
    \textsf{III} &= \bigg| \int_\Omega \grad \adjoint \cdot \bigg\{ \del{\del{ A[\cS(u)] - A[\cS(u+h)]} \grad \cS''(u+h)h_1h_2} \\
    &\quad + \del{D_u A[\cS(u)]\pair{h_2}\grad \cS'(u)h_1
      - D_u A[\cS(u+h)]\pair{h_2}\grad \cS'(u+h)h_1} \bigg\} \bigg| \\
    &\le \normS{\adjoint}1\infty\Omega \normLt{A[\cS(u)] - A[\cS(u+h)]}\Omega
         \normS{\cS''(u+h)h_1h_2}12\Omega  \\
    &\quad + \normS{\adjoint}1\infty\Omega
             \normLt{D_u A[\cS(u)]\pair{h_2}}\Omega \normS{\del{\cS'(u)-\cS'(u+h)}h_1}12\Omega \\
    &\quad + \normS{\adjoint}1\infty\Omega
             \normLt{D_u \del{A[\cS(u)]- A[\cS(u+h)]}\pair{h_2}}\Omega \normS{\cS'(u+h)h_1}12\Omega  .
\end{align*}
Using \eqref{eq:A_Lipschitz_L2}, \eqref{eq:S_Lipschitz}, \eqref{eq:second_Frechet_derivative_estimate}, \eqref{eq:DuA_Lipschitz_L2}, \eqref{eq:Sp_Lipschitz} and \eqref{eq:first_Frechet_derivative_estimate}, we obtain
\[
    \textsf{III} \le C(n,p,B_1,\Omega)  \normS{\adjoint}1\infty\Omega \del{ \normLt{h}\Omega \norm{h_2}_{L^p(\Omega)} + \norm{h}_{L^p(\Omega)}  \normLt{h_2}\Omega } \normLt{h_1}\Omega .
\]
\end{proof}

\begin{lemma}[Second order behavior in a neighborhood.]\label{lem:second_order_sufficient_condition_nbd}
If $\uop$ satisfies \eqref{eq:second_order_sufficient_condition} then
\begin{equation}\label{eq:second_order_sufficient_condition_nbd}
\cJ''(u) (\uop-u)^2 \ge \frac{\delta}{2} \norm{\uop-u}_{L^2(\Omega)}^2,
\end{equation}
for all $u \in U_{ad}$ with $\norm{u - \uop}_{L^p(\Omega)} < \frac{\delta}{4L}$.  Note: the argument of $\cJ''$ is different from that in \eqref{eq:second_order_sufficient_condition}.
\end{lemma}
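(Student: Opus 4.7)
The plan is a direct decomposition argument that invokes \lemref{lem:aux_estimate} to shift the evaluation point of $\cJ''$. For $u \in U_{ad}$, I would write
\begin{equation*}
\cJ''(u)(\uop-u)^2 \;=\; \cJ''(\uop)(\uop-u)^2 \;+\; \bigl[\cJ''(u) - \cJ''(\uop)\bigr](\uop-u)^2,
\end{equation*}
so that \eqref{eq:second_order_sufficient_condition_nbd} follows from a lower bound on the first summand (via the hypothesis \eqref{eq:second_order_sufficient_condition}) together with a quantitative upper bound on the magnitude of the second summand (via \eqref{eq:auxiliary_estimate}).

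For the first summand, since $\uop, u \in U_{ad}$ and $U_{ad}$ is convex, the direction $u - \uop$ lies in $\cone{\uop}$ (take $t = 1$ in Definition~\ref{defn:convex_cone}); using symmetry of the bilinear form $\cJ''(\uop)$ in its two arguments, \eqref{eq:second_order_sufficient_condition} yields $\cJ''(\uop)(\uop-u)^2 \ge \delta \|\uop-u\|_{L^2(\Omega)}^2$. For the second summand, I would apply \lemref{lem:aux_estimate} with base control $\uop$ (which lies in $U$, since $U_{ad} \subset U$ by Definition~\ref{def:admissible_control}), perturbation $h = u - \uop$, and test directions $h_1 = h_2 = \uop - u$. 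The estimate \eqref{eq:auxiliary_estimate} then yields
\begin{equation*}
\bigl| \cJ''(u)(\uop - u)^2 - \cJ''(\uop)(\uop - u)^2 \bigr| \;\le\; 2L \, \|u - \uop\|_{L^p(\Omega)} \, \|\uop - u\|_{L^2(\Omega)}^2,
\end{equation*}
and the smallness assumption $\|u - \uop\|_{L^p(\Omega)} < \delta/(4L)$ reduces the right-hand side to $\tfrac{\delta}{2}\|\uop-u\|_{L^2(\Omega)}^2$. Combining the two estimates delivers \eqref{eq:second_order_sufficient_condition_nbd}.

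There is essentially no analytical obstacle in the present lemma: all the real work, namely the careful $L^2$--$L^p$ norm juggling, was absorbed into \lemref{lem:aux_estimate}, and this statement is in effect a corollary that packages that Lipschitz-type estimate together with the pointwise coercivity \eqref{eq:second_order_sufficient_condition} into a local quadratic coercivity of $\cJ''$ throughout an $L^p$-ball of radius $\delta/(4L)$ around $\uop$. The only bookkeeping point is verifying membership in $\cone{\uop}$ so that the SSC can be invoked, which is immediate from convexity of $U_{ad}$.
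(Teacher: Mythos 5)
Your proposal is correct and follows essentially the same route as the paper's proof: the identical splitting of $\cJ''(u)(\uop-u)^2$ into $\cJ''(\uop)(\uop-u)^2$ plus a remainder, bounded below by the SSC \eqref{eq:second_order_sufficient_condition} and above by \eqref{eq:auxiliary_estimate} with $h=u-\uop$, $h_1=h_2=\uop-u$, then absorbing the $2L\norm{u-\uop}_{L^p(\Omega)}\norm{u-\uop}_{L^2(\Omega)}^2$ term using the radius $\delta/(4L)$. Your explicit check that $u-\uop\in\cone{\uop}$ (via convexity of $U_{ad}$) and that $\uop\in U$ so the auxiliary lemma applies is a welcome bit of bookkeeping the paper leaves implicit, but it is not a different argument.
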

\begin{proof}
We begin by rewriting $\cJ''(u) (\uop - u)^2$:
\begin{align*}
\cJ''(u) (\uop - u)^2
&= \cJ''(\uop) (\uop - u)^2 + \del{\cJ''(u) (\uop - u)^2 - \cJ''(\uop) (\uop - u)^2} \\
&\ge \cJ''(\uop) (\uop - u)^2 - \abs{\del{\cJ''(u) (\uop - u)^2 - \cJ''(\uop) (\uop - u)^2} }
= \textsf{I} - \textsf{II}
\end{align*}
Using \eqref{eq:second_order_sufficient_condition}, we obtain $\textsf{I} \ge \delta \norm{\uop - u}_{L^2(\Omega)}^2$.  And invoking \eqref{eq:auxiliary_estimate} yields
\[
\textsf{II} \le L \del{ \norm{u-\uop}_{L^2(\Omega)} \norm{u-\uop}_{L^p(\Omega)}
                   + \norm{u-\uop}_{L^p(\Omega)} \norm{u-\uop}_{L^2(\Omega)} }
                     \norm{u-\uop}_{L^2(\Omega)}  .
\]
Finally, combining the estimates for \textsf{I} and \textsf{II} gives
\[
\cJ''(u) (u-\uop)^2 \ge \delta \norm{\uop-u}_{L^2(\Omega)}^2
      - 2L \norm{\uop - u}_{L^p(\Omega)} \norm{\uop-u}_{L^2(\Omega)}^2.
\]
For $\norm{u-\uop}_{L^p(\Omega)} < \frac{\delta}{4L}$, we obtain \eqref{eq:second_order_sufficient_condition_nbd}.
\end{proof}

We now arrive at the main result of this section.
\begin{proof}[{\bf Proof of Corollary~\ref{cor:quad_growth}}]
We proceed in two steps:

\noindent
\boxed{1}
Let $u \in U_{ad}$ and $\norm{u-\uop}_{L^p(\Omega)} < \frac{\delta}{8L}$. By Taylor's theorem, there is a $t \in (0,1)$ such that
\begin{align*}
\cJ(u) &= \cJ(\uop) + \pair{\cJ'(\uop),u-\uop} + \frac{1}{2} \cJ''\del{t u + (1-t)\uop}(u-\uop)^2 \\
       &= \cJ(\uop) + \pair{\cJ'(\uop),u-\uop} + \frac{1}{2} \cJ''\del{\uop}(u-\uop)^2
          + \frac{1}{2} \del{ \cJ''\del{t u + (1-t)\uop} - \cJ''\del{\uop} }(u-\uop)^2  \\
       &\ge \cJ(\uop) + \pair{\cJ'(\uop),u-\uop} + \frac{\delta}{2}\norm{u-\uop}_{L^2(\Omega)}^2
          - \abs{\frac{1}{2} \del{ \cJ''\del{t u + (1-t)\uop} - \cJ''\del{\uop} }(u-\uop)^2},
\end{align*}
where the last inequality is due to \eqref{eq:second_order_sufficient_condition}.
Next, \eqref{eq:auxiliary_estimate} gives
\begin{align*}
    \cJ(u) \ge \cJ(\uop) + \pair{\cJ'(\uop),u-\uop} +  \frac{\delta}{2} \normLt{u-\uop}\Omega^2 - 2L \norm{u-\uop}_{L^p(\Omega)}
                                \normLt{u-\uop}\Omega^2,
\end{align*}
which implies
\begin{equation}\label{eq:quad_growth_1}
\cJ(u) \ge \cJ(\uop) + \pair{\cJ'(\uop),u-\uop} + \frac{\delta}{4} \norm{u-\uop}_{L^2(\Omega)}^2 .
\end{equation}
Using \lemref{lem:first_order_opt}, we obtain \eqref{eq:J_quad_growth}.

\noindent
\boxed{2}
Since $\norm{u-\uop}_{L^p(\Omega)} < \frac{\delta}{8L}$ (i.e. $u$ satisfies \eqref{eq:second_order_sufficient_condition_nbd}), we can repeat all the steps in
\boxed{1} with $u$ replaced by $\uop$ and vice-versa to get
\begin{equation}\label{eq:quad_growth_2}
\cJ(\uop) \ge \cJ(u) + \pair{\cJ'(u),\uop-u} + \frac{\delta}{4} \norm{\uop-u}_{L^2(\Omega)}^2 .
\end{equation}
Adding \eqref{eq:quad_growth_1} and \eqref{eq:quad_growth_2} and setting $\epsilon = \frac{\delta}{8L}$ proves the corollary.
\end{proof}

\section{Discrete Control Problem}
\label{s:numerics}
Let $\cT$ denote a geometrically conforming, quasi-uniform triangulation of the domain $\Omega$ such that $\overline{\Omega} = \cup_{K\in\cT} K$ with $K$ closed and $h$ the meshsize of $\cT$. Consider the following finite dimensional spaces
\begin{align}\label{eq:disc_spaces}
\begin{aligned}
    Y^h &= \set{y_h \in C^0(\overline{\Omega}) : y_h\restriction_K \in \mathbb{P}_1(K), K \in \cT } , \\
    \mathring{Y}^h &= Y^h \cap \sobZ1\infty\Omega , \\
    U_{ad}^h &= Y^h \cap U_{ad} .
\end{aligned}
\end{align}
The spaces $U_{ad}^h$, $Y^h$ will be used to approximate the continuous solution of \eqref{eq:cost} and \eqref{eq:state}. The spaces are based on the finite dimensional space $\mathbb{P}_1$ which are the linear polynomials on the domain $K$, where $K$ is a triangle. This discretization is classical and can be found in any standard finite element book, for instance \cite{PGCiarlet_2002a,SCBrenner_RLScott_2008a}. We remark that in our numerical implementation the $L^p$ constraints in $U_{ad}^h$ are enforced by scaling the functions with their $L^p$-norm, we refer to \secref{s:computations} for more details. For the error analysis, we shall need the following.  Let $I_h : \sob1r\Omega \rightarrow Y^h$ be the global interpolation operator, i.e. if $r>n$ then $I_h$ is the standard Lagrange interpolation operator, otherwise it indicates the so-called Scott-Zhang interpolation operator \cite{RScott_SZhang_1990}.  Moreover, there exists a constant $C > 0$ independent of $h$ and $w$, such that $I_h$ satisfies the optimal estimate
\begin{align*}
    \normSZ{w - I_h w}1r\Omega \le C h \normSZ{w}2r\Omega , \quad \forall w \in \sob2r\Omega
        \quad 1 \le r \le \infty.
\end{align*}
We shall discretize the data $v$ using this Lagrange interpolant.

\subsection{Discrete State Equation}
The discrete state equation is given by
\begin{equation}
\label{eq:state_weak_discrete}
    \int_\Omega \frac{\grad y_h}{\cQ(y_h)} \cdot \grad z_h
        = \int_\Omega u_h z_h , \quad \mbox{ for all } z_h \in \mathring{Y}^h  .
\end{equation}
%
%
%
To prove the existence of a solution to the state equation \eqref{eq:state_weak_discrete}, as well as derive error estimates, we will borrow some ideas from \cite{RHNochetto_1989a}, which is motivated by \cite{DKindelehrer_GStampacchia_1980}. Let $R_1 = C_S  B_1$, where $C_S$ is the embedding constant of $\sob1\infty\Omega$ into $\sob2p\Omega$ and $B_1$ is taken from \eqref{eq:B}. We begin by modifying the the vector $G(y) = \frac{\grad y}{\cQ(y)}$ in the complement of $\set{x\in \mathbb{R}^2 : |x| > 3 R_1}$ as in \cite[p.~97]{DKindelehrer_GStampacchia_1980} and denote the new vector field by $\widetilde{G}$. The modification is such that the vector field $\widetilde{G}$ is strongly coercive. Let $y$ and $\widetilde{y}$ be the solutions to \eqref{eq:state} with $G$ and $\widetilde{G}$ respectively with right-hand-side $u_h$.  Essentially, $\widetilde{y}$ solves a regularized problem and provides a path to obtaining an error estimate between the solutions of \eqref{eq:state} and \eqref{eq:state_weak_discrete}.

To this end, we estimate the modulus of continuity $\omega$ of $\grad \widetilde{y}$ (and $\grad y)$.
\begin{lemma}[modulus of continuity]
\label{lem:modolus}
$\omega(\sigma) \le C \sigma^{1-2/p}$.
\end{lemma}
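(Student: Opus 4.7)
The plan is to reduce the estimate on the modulus of continuity of $\grad\widetilde{y}$ (and $\grad y$) to the Sobolev embedding
\[
    \sob1p\Omega \hookrightarrow C^{0,1-n/p}(\overline{\Omega}) = C^{0,1-2/p}(\overline{\Omega}),\qquad p > n = 2,
\]
which holds since $\Omega$ is $C^{1,1}$ (in particular Lipschitz). First I would argue that $\widetilde{y}$ lies in $\sob2p\Omega$ with a controlled norm: by construction the modified vector field $\widetilde G$ is strongly (globally) coercive, so the linearized non-divergence form of the equation for $\widetilde y$ has bounded, uniformly elliptic coefficients, and Gilbarg--Trudinger \cite[Theorem~9.15, Lemma~9.17]{DGilbarg_NTrudinger_2001a} (applied exactly as in the proof of \thmref{thm:fixed_point} via a fixed-point argument on the regularized problem) yields $\widetilde y\in\sob2p\Omega$ together with an a priori bound $\normS{\widetilde y}2p\Omega \le C$, where $C$ depends only on $\Omega$, $p$, $\normS{v}2p\Omega$, and $\norm{u_h}_{\Lp\Omega}\le \theta$.

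The analogous bound for $y$ itself is already available: by \thmref{thm:state_strong} combined with the continuity estimate in \thmref{thm:fixed_point}, we have $y\in\sob2p\Omega$ with $\normS{y}2p\Omega \le B_1$. In either case, $\grad y,\,\grad\widetilde y \in \sob1p\Omega$ componentwise, so the Morrey embedding above gives
\[
    |\grad \widetilde y(x) - \grad \widetilde y(x')| \le C\,\normS{\grad\widetilde y}1p\Omega\, |x-x'|^{1-2/p}
\]
for all $x,x'\in\overline\Omega$ (and similarly for $\grad y$), which, after taking the supremum over pairs with $|x-x'|\le \sigma$, yields $\omega(\sigma)\le C\sigma^{1-2/p}$ as claimed.

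The main (and essentially only) technical point is making sure the $\sob2p\Omega$ bound on $\widetilde y$ is uniform and depends on the data in a controlled way; once this is in hand the Hölder estimate is standard Morrey. I would make the constant $C$ explicit by tracking the embedding constant $C_S$ of $\sob2p\Omega\hookrightarrow C^{0,1-2/p}(\overline\Omega)$ and the a priori bound $B_1$ from \eqref{eq:B}, so that the dependence $\omega(\sigma)\le C\sigma^{1-2/p}$ holds uniformly for every discrete right-hand side $u_h\in U_{ad}^h$ and hence can be used later in the finite element error analysis à la \cite{RHNochetto_1989a}.
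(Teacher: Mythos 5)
Your argument is the same as the paper's: the paper also deduces the bound directly from Morrey's inequality, $\norm{\grad \widetilde y}_{C^{0,1-2/p}(\overline\Omega)} \le C(n,p,\theta,\Omega)\norm{\widetilde y}_{W^2_p(\Omega)}$, applied to the regularized solution $\widetilde y \in \sob2p\Omega$, so your proposal is correct and follows essentially the same route. Your additional care in justifying that the $\sob2p\Omega$ bound on $\widetilde y$ is uniform in the data (via the strong coercivity of $\widetilde G$ and the Gilbarg--Trudinger estimates) is a reasonable elaboration of what the paper leaves implicit in its constant $C(n,p,\theta,\Omega)$, but it is not a different method.
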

\begin{proof}
Using Morrey's inequality (see \cite[Lemma~4.1]{RHNochetto_1989a}), we know
\[
\norm{\grad \widetilde y}_{C^{0,\alpha}(\overline\Omega)} \le C(n,p,\theta,\Omega)  \norm{\widetilde y}_{W^2_p(\Omega)} ,
\quad \mbox{for } \alpha = 1 - 2/p .
\]
Then for $x_1,x_2 \in \Omega$, we get
\[
\abs{\grad \widetilde y(x_1) - \grad \widetilde y(x_2)} \le C \abs{x_1-x_2}^{1-2/p} ,
\]
which implies the assertion.
\end{proof}

The following lemma provides an estimate of the $L^\infty$ norm of $\grad(y - \widetilde y)$.
\begin{lemma}
\label{lem:loc_estimate}
If $\eta > 0$ and $\norm{y - \widetilde{y}}_{L^\infty(\Omega)} < \eta$, then
\[
\norm{\grad(y-\widetilde{y})}_{L^\infty(\Omega)} \le
\left\{\begin{array}{ll}
        C \eta^{1-2/p} & p < 4 , \\
        C \eta^{1/2}     & p \ge 4 .
       \end{array}
\right.
\]
Moreover, $\widetilde{y}$ solves \eqref{eq:state} with $\norm{\grad \widetilde y}_{L^\infty(\Omega)}\le 2 R_1$.
\end{lemma}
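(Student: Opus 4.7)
The plan is to first convert the $L^\infty$-smallness of $v := y - \widetilde{y}$ into $L^\infty$-smallness of $\grad v$ by interpolating against the H\"older continuity of $\grad v$ supplied by \lemref{lem:modolus}, and then to use that gradient bound to verify that the cutoff defining $\widetilde G$ is inactive at $\grad \widetilde y$, so that $\widetilde y$ satisfies the un-truncated state equation \eqref{eq:state}.

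For the interpolation step, both $y$ and $\widetilde y$ lie in $\sob2p\Omega$ with uniform bounds (for $y$ by \thmref{thm:fixed_point}; for $\widetilde y$ by the strong coercivity of $\widetilde G$ combined with \cite[Lemma~9.17]{DGilbarg_NTrudinger_2001a}), so $v \in \sob2p\Omega$ is of class $C^{1,\alpha}(\overline{\Omega})$ with $\alpha = 1-2/p$ and has vanishing boundary trace. Pick $x_0 \in \Omega$ (nearly) maximising $|\grad v|$ and set $e := \grad v(x_0)/|\grad v(x_0)|$; \lemref{lem:modolus} applied along the segment $s \mapsto x_0+se$ gives $\grad v(x_0+se)\cdot e \ge |\grad v(x_0)| - C s^{1-2/p}$. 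Integrating in $s\in(0,t)$ and invoking $|v(x_0+te)-v(x_0)| \le 2\eta$ then yields
\[
|\grad v(x_0)| \le \frac{2\eta}{t} + C\, t^{1-2/p} .
\]
A careful choice of $t$ as a power of $\eta$ in the two regimes $p<4$ and $p \ge 4$ delivers the announced rates $C\eta^{1-2/p}$ and $C\eta^{1/2}$, respectively.

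For the second conclusion, the a priori bound $y \in \mathbb{B}$ from \eqref{eq:B} together with the embedding $\sob2p\Omega \hookrightarrow \sob1\infty\Omega$ (constant $C_S$) gives $\norm{\grad y}_{\Linf{\Omega}} \le R_1$. Combined with the gradient estimate above, for $\eta$ sufficiently small,
\[
\norm{\grad \widetilde y}_{\Linf{\Omega}} \le \norm{\grad y}_{\Linf{\Omega}} + \norm{\grad(y-\widetilde y)}_{\Linf{\Omega}} \le R_1 + C\eta^\beta \le 2 R_1 < 3R_1 ,
\]
so the cutoff in $\widetilde G$ is not activated at $\grad \widetilde y$; hence $\widetilde G(\grad \widetilde y) = G(\grad \widetilde y)$ pointwise and $\widetilde y$ satisfies \eqref{eq:state}. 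The principal obstacle is the optimisation in the first step: a naive pointwise Taylor balance alone yields only the weaker exponent $(p-2)/(2p-2)$, so recovering the announced rates appears to require combining the H\"older modulus with the zero boundary trace of $v$ and with the linear elliptic equation that $v$ satisfies in the ``good'' subregion where the original and truncated equations coincide, using classical $\sob2p\Omega$ estimates to sharpen the pointwise Taylor bound.
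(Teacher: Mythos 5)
Your overall route is the paper's: pick a point $x_0$ where $\abs{\grad(y-\widetilde y)}$ is maximal, run a Taylor/fundamental-theorem-of-calculus argument along a segment of length $t$, control the remainder by the H\"older modulus $\omega(\sigma)\le C\sigma^{1-2/p}$ of Lemma \ref{lem:modolus}, and then conclude from $\norm{\grad \widetilde y}_{L^\infty(\Omega)}\le \norm{\grad y}_{L^\infty(\Omega)}+\norm{\grad(y-\widetilde y)}_{L^\infty(\Omega)}\le R_1+o(1)\le 2R_1<3R_1$ (for $\eta$ small) that the truncation defining $\widetilde G$ is inactive, so $\widetilde y$ solves \eqref{eq:state}; this second half coincides with the paper and is fine (apart from the notational clash of your $v:=y-\widetilde y$ with the boundary datum $v$). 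The genuine gap is in the first step: the announced rates are never derived. As you yourself compute, minimizing $2\eta/t+Ct^{1-2/p}$ over $t$ gives only $C\eta^{(p-2)/(2p-2)}$, which is strictly below both $\eta^{1-2/p}$ (for $p<4$) and $\eta^{1/2}$ (for $p\ge 4$), so no choice of $t$ in this balance can deliver the statement; and the repair you gesture at (zero trace of the difference plus $W^2_p$ elliptic estimates in the region where $G$ and $\widetilde G$ coincide) is not carried out. A step that ends with ``appears to require'' leaves the claimed exponents unproved.

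For comparison, the paper does not optimize the radius and uses no elliptic sharpening: it fixes $\abs{x-x_0}=\eta^{1/2}$, divides by $\eta^{1/2}$ to get $\abs{\grad e(x_0)}\le 2\eta^{1/2}+\omega(\eta^{1/2})$, and then quotes Lemma \ref{lem:modolus} to write the second term as $C\eta^{1-2/p}$, the dichotomy $p<4$ versus $p\ge 4$ being simply which of $\eta^{1/2}$, $\eta^{1-2/p}$ dominates. Your unease about the exponent bookkeeping is in fact well placed: a literal application of $\omega(\sigma)\le C\sigma^{1-2/p}$ at $\sigma=\eta^{1/2}$ yields $C\eta^{(p-2)/(2p)}$, so the paper's fixed-radius step itself needs the stronger reading $\omega(\eta^{1/2})\le C\eta^{1-2/p}$ to produce the stated rates (which are then fed into the choice of $\eta$ in Theorem \ref{thm:error_state}). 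To align with the paper you should present the fixed-radius argument rather than an optimized balance; to obtain the lemma exactly as stated you must either justify that stronger modulus bound or weaken the stated rate (e.g.\ to $\eta^{1/2-1/p}$) and adjust the downstream choice of $\eta$ accordingly. What cannot stand is the current version, which asserts the rates from an optimization that provably yields a weaker exponent and defers the real work to an unexecuted sketch.
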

\begin{proof}
With $\norm{y - \widetilde{y}}_{L^\infty(\Omega)} < \eta$ and Lemma~\ref{lem:modolus} in hand, the proof is based on \cite[Lemma~4.2]{RHNochetto_1989a} (we provide the details for completeness). Let $x_0 \in \Omega$ be such that $\norm{\grad e}_{L^\infty(\Omega)} = \abs{\grad e(x_0)}$, where $e = y - \widetilde{y}$. Then, for $x \in \Omega \cap \set{x \in \mathbb{R}^2 : \abs{x - x_0} = \eta^{1/2}}$, the Fundamental theorem of calculus gives
\begin{align*}
e(x)
&= e(x_0) + \int_0^1 \grad e(s x + (1-s)x_0) ds \cdot (x-x_0) \\
&= e(x_0) + \grad e(x_0) (x-x_0) + \int_0^1 \del{\grad e(s x + (1-s)x_0) - \grad e(x_0)} ds \cdot (x-x_0) .
\end{align*}
This leads to
\[
\eta^{1/2} \abs{\grad e(x_0)} \le \abs{e(x)} + \abs{e(x_0)}
    + \omega(\abs{x-x_0}) \cdot \abs{x-x_0} .
\]
Using \lemref{lem:modolus} we deduce
\[
 \abs{\grad e(x_0)} \le C \del{\eta^{1/2} + \eta^{1-2/p}} .
\]
For $p < 4$, we get $\norm{\grad e}_{L^\infty(\Omega)} \le C \eta^{1-2/p}$, otherwise $\norm{\grad e}_{L^\infty(\Omega)} \le C \eta^{1/2}$.
For sufficiently small $\eta > 0$, we have $\abs{\grad \widetilde y(x_0)} \le 2 R_1$ for all
$x_0 \in \Omega$. As $\widetilde{G}(\grad \widetilde y(x)) = G(\grad \widetilde y(x))$ for all $x \in \Omega$,
therefore $\widetilde y$ solves \eqref{eq:state}.
\end{proof}
We thus have the following result.
\begin{theorem}[existence of the discrete solution]
\label{thm:error_state}
Let $y, \widetilde y_{h}$ solve \eqref{eq:state}, \eqref{eq:state_weak_discrete} with $G$ and $\widetilde{G}$ respectively and right-hand-side $u_h$. There exists $h_0 > 0$ such that for all $0 < h \le h_0$, the problem \eqref{eq:state_weak_discrete} admits
a unique solution $\widetilde y_{h}$. Setting $\eta = h^{p/(p-2)}$ for $p < 4$, and $\eta = h^2$ for $p \geq 4$, we get
\[
\norm{\grad(y-\widetilde y_{h})}_{L^\infty(\Omega)} \le C(n,p,R_1,\theta,\Omega) h \abs{\log h}^4.
\]
\end{theorem}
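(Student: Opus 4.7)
The plan is to follow the regularization strategy of Nochetto \cite{RHNochetto_1989a}: work throughout with the strongly coercive field $\widetilde G$, establish an $L^\infty$ error estimate for the associated regularized discrete solution, and then mimic the modulus-of-continuity argument of \lemref{lem:loc_estimate} to upgrade this to the desired $W^1_\infty$ bound. Once $\grad \widetilde y_h$ is known to be bounded by $2R_1$, the modification is inert, so $\widetilde y_h$ in fact solves the original discrete equation \eqref{eq:state_weak_discrete}.

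For existence of $\widetilde y_h$, the field $\widetilde G$ is uniformly monotone and Lipschitz by construction, so the finite-dimensional nonlinear problem in $\mathring Y^h$ (with discrete boundary data $I_h v$) admits a unique solution via Browder--Minty. For the $L^\infty$ error bound, I would first note that $y$ itself solves the regularized continuous equation: \eqref{eq:estimate_y_2} combined with the embedding $\sob2p\Omega \hookrightarrow \sob1\infty\Omega$ gives $\norm{\grad y}_{L^\infty(\Omega)} \le R_1 < 3R_1$, so $\widetilde G(\grad y) = G(\grad y)$. Strong monotonicity of $\widetilde G$, Galerkin orthogonality, and the best-approximation estimate in $\mathring Y^h$ then yield a $W^1_2$ error of order $h$. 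To lift this to $L^\infty$ I would apply a Schatz--Wahlbin weighted duality argument to the linearization of $\widetilde G$ about $y$, producing an estimate of the form $\norm{y - \widetilde y_h}_{L^\infty(\Omega)} \le C h^2 \abs{\log h}^s$ for a small integer $s$.

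For the gradient estimate, I would replay the proof of \lemref{lem:loc_estimate} with $\widetilde y_h$ in place of $\widetilde y$. Pick $x_0 \in \Omega$ (in the interior of a single element, so that $\grad \widetilde y_h(x_0)$ is meaningful) at which $\abs{\grad(y - \widetilde y_h)}$ is essentially maximized, and apply the fundamental theorem of calculus on a segment of length $\eta^{1/2}$:
\[
  \eta^{1/2} \abs{\grad(y - \widetilde y_h)(x_0)} \le 2\norm{y - \widetilde y_h}_{L^\infty(\Omega)} + \omega(\eta^{1/2}) \, \eta^{1/2},
\]
where $\omega$ is a joint modulus of continuity for $\grad y$ and $\grad \widetilde y_h$. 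For $\grad y$, \lemref{lem:modolus} gives $C \sigma^{1-2/p}$; for $\grad \widetilde y_h$ (piecewise constant) a local inverse inequality combined with the $W^1_2$ bound above produces a comparable modulus of order $\sigma^{1/2}$, up to an $\abs{\log h}$ factor. The choices $\eta = h^{p/(p-2)}$ for $p<4$ and $\eta = h^2$ for $p \ge 4$ equalize both right-hand terms at the level $h \abs{\log h}^4$, giving the stated bound.

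The main obstacle will be the $L^\infty$ error estimate itself. The regularized field $\widetilde G$ is only Lipschitz (not $C^1$) near the transition $\abs{x} = 3R_1$, and its Jacobian has only $L^\infty$ coefficients, so the Schatz--Wahlbin framework must be carried through carefully for the linearization about $y$; this is the source of the accumulation of logarithmic factors responsible for the exponent $4$ in $\abs{\log h}^4$. Once this $L^\infty$ bound is secured, the concluding modulus-of-continuity step is a direct transcription of the argument proving \lemref{lem:loc_estimate}.
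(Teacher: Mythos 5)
Your overall architecture (replace $G$ by the strongly coercive $\widetilde G$, show the discrete gradient stays below $3R_1$ so the modification is inert, finish with a triangle inequality) is the same as the paper's, but there is a genuine gap in how you produce the $W^1_\infty$ error estimate for the regularized discrete problem. The paper does not prove this estimate: it invokes \cite[Theorem~3.2]{RHNochetto_1989a} as a black box to get $\norm{\grad(\widetilde y-\widetilde y_h)}_{L^\infty(\Omega)}\le C h\abs{\log h}^4$ for the strongly coercive problem, and uses the modulus-of-continuity argument of \lemref{lem:loc_estimate} only at the continuous level (to compare $y$ and $\widetilde y$, both of which are $W^2_p$ solutions) and to check $\norm{\grad \widetilde y_h}_{L^\infty(\Omega)}\le 3R_1$. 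You instead transplant the modulus argument to the pair $(y,\widetilde y_h)$, and that step fails for two reasons. First, $\grad\widetilde y_h$ is piecewise constant, so it has no H\"older modulus; your patch (a local inverse inequality plus the $O(h)$ energy error) only gives an $O(1)$ bound on the gradient discrepancy at scale $h$, not a $\sigma^{1/2}$ modulus, and controlling the jumps of $\grad\widetilde y_h$ to the needed accuracy is precisely the $W^1_\infty$ estimate you are trying to prove, so the argument is circular. Second, the rate cannot come out right for $n=2<p<4$: the hypothesis of \lemref{lem:loc_estimate} would require $\norm{y-\widetilde y_h}_{L^\infty(\Omega)}<\eta=h^{p/(p-2)}$ with $p/(p-2)>2$, which is strictly smaller than the best possible finite element $L^\infty$ error $O(h^2\abs{\log h}^s)$ your Schatz--Wahlbin step could ever deliver; taking the largest admissible $\eta\sim h^2$ the modulus argument only yields $\norm{\grad(y-\widetilde y_h)}_{L^\infty(\Omega)}\le C h^{2-4/p}$, e.g.\ roughly $h^{0.4}$ for the paper's $p=2.5$, far short of the claimed $h\abs{\log h}^4$.

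In addition, the $O(h^2\abs{\log h}^s)$ pointwise estimate you assume is itself the hard technical content and is not available off the shelf here: $\widetilde G$ is merely Lipschitz near $\abs{x}=3R_1$ and $y$ is only in $\sob2p\Omega$, which is exactly why the paper outsources the whole $W^1_\infty$ bound to Nochetto's theorem rather than routing it through an intermediate $L^\infty$ estimate. The repair is to follow that route: apply the quoted $W^1_\infty$ estimate to $\widetilde y$ versus $\widetilde y_h$ for the coercive problem, use \lemref{lem:modolus} and \lemref{lem:loc_estimate} only to bound $\norm{\grad(y-\widetilde y)}_{L^\infty(\Omega)}\le Ch$ with the stated choices of $\eta$ and to conclude $\norm{\grad\widetilde y_h}_{L^\infty(\Omega)}\le 3R_1$ for $h\le h_0$ (so $\widetilde y_h$ solves \eqref{eq:state_weak_discrete} with $G$), and then add the two contributions by the triangle inequality.
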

\begin{proof}
We proceed in two steps:

\noindent
\boxed{1}
Let $\widetilde y_{h} \in I_h v \oplus \mathring{Y}^h$ be the solution to
\eqref{eq:state_weak_discrete} with $\widetilde{G}$ instead of $G$. Then using \cite[Theorem~3.2]{RHNochetto_1989a}, we obtain
\begin{equation*}
\norm{\grad(\widetilde y - \widetilde y_{h})}_{L^\infty(\Omega)} \le C h \abs{\log h}^4 ,
\end{equation*}
which, using \lemref{lem:loc_estimate} and $h>0$ sufficiently small, immediately implies $\norm{\grad \widetilde y_{h}}_{L^\infty(\Omega)} \le 3 R_1$; thus, $\widetilde y_{h}$ is the solution to the discrete problem with $G$ instead of $\widetilde{G}$.

\noindent
\boxed{2}
Using the triangle inequality, we get
$
\norm{\grad(y-\widetilde y_{h})}_{L^\infty(\Omega)}
\le \norm{\grad(y-\widetilde y)}_{L^\infty(\Omega)}
+ \norm{\grad(\widetilde y-\widetilde y_{h})}_{L^\infty(\Omega)}
$.
Then Lemma~\ref{lem:loc_estimate} and \boxed{1} gives the estimate.
\end{proof}

\subsection{Discrete Optimal Control Problem}

We first recall that $\uop$ denotes the local optimal control for the continuous problem \eqref{eq:cost}. The discrete version of the continuous optimal control problem \eqref{eq:cost} is
\begin{equation} \label{eq:cost_discrete}
    \inf \cJ_h\del{y_h,u_h} :=
    \frac{1}{2}\norm{y_h-y_d}_{L^2(\Omega)}^2
      + \frac{\alpha}{2} \norm{u_h}_{L^2(\Omega)}^2  \quad \mbox{over }
        y_h - I_h v \in \mathring{Y}^h , u_h \in U_{ad}^h ,
\end{equation}
subject to $y_h - I_h v \in \mathring{Y}_h$ solving \eqref{eq:state_weak_discrete}.
We remark that in \eqref{eq:cost_discrete}, for simplicity, we have not discretized $y_d$.

The discrete optimality conditions amount to the state \eqref{eq:state_weak_discrete}; the adjoint, find $\adjop_h \in \mathring{Y}^h$ such that
\begin{equation}\label{eq:adjoint_weak_discrete}
    \int_\Omega \grad z^T_h A[\yop_h] \grad \adjop_h = \int_\Omega (\yop_h-y_d)z_h \quad \mbox{ for all } z_h \in \mathring{Y}^h ,
\end{equation}
where $A[\yop_h] = \frac{1}{\cQ(\yop_h)}\del{ \cI - \frac{\grad \yop_h \grad \yop_h^T }{\cQ(\yop_h)^2} }$,
and the discrete variational inequality for the optimal control
\begin{equation}\label{eq:control_weak_discrete}
    \pair{\adjop_h + \alpha \uop_h, u_h-\uop_h}_{L^2(\Omega),L^2(\Omega)} \ge 0, \quad
        \mbox{ for all } u_h \in U_{ad}^h .
\end{equation}

\begin{remark}\label{rem:functional_derivative_discrete}
Similar to Remark \ref{rem:functional_derivative}, the discrete functional derivative is given by $\cJ_h'(u_h) = \adjoint_h(y_h) + \alpha u_h$ for an arbitrary $u_h$ in $U_{ad}^h$, where $y_h$ solves \eqref{eq:state_weak_discrete} with $u_h$ as right-hand-side, and $\adjoint_h(y_h)$ solves \eqref{eq:adjoint_weak_discrete} with right-hand-side given by $y_h - y_d$.
\end{remark}

The notion of \emph{local} control is useful for making sense of the error estimate on the optimal control.
\begin{definition}[local control]
A control $\uop_h \in U_{ad}^h$ is a \emph{local} solution to \eqref{eq:cost_discrete}, if
\[
\cJ_h(u_h) \ge \cJ_h(\uop_h) \quad \mbox{for all } u_h \in U_{ad}^h \mbox{ with }
\norm{u_h - \uop}_{L^p(\Omega)} \le \gamma
\]
holds for a certain $\gamma > 0$.  Note that $\uop$ appears.
\end{definition}

To this end, we make the following assumption.
\begin{assumption}\label{ass:local_exist_disc_contr}
There exists $\uop_h \in U^h_{ad}$ which is a local solution to \eqref{eq:cost_discrete}.
\end{assumption}
\begin{remark}
If instead of $\mathbb{P}_1$, we use $\mathbb{P}_0$ to define $U_{ad}^h$ in \eqref{eq:disc_spaces}, Assumption~\ref{ass:local_exist_disc_contr} can be shown based on \cite[Section~4.4]{ECasas_FTroeltzsch_2002a}. This uses the fact that the standard $L^2$-orthogonal projection operator
$Q_h : L^2(\Omega) \rightarrow \mathbb{P}_0$, defined by $(u - Q_h u, u_h) = 0$, for all $u_h \in U_{ad}^h$, is
given by $Q_h u \restriction_{K} = \del{\frac{1}{\abs{K}} \int_K u}  \in U_{ad}^h$. The final inclusion is due to the well-known Jensen's inequality.
\end{remark}

We next state an important intermediate estimate for the optimal control.
\begin{theorem}[error estimate on the control]\label{thm:prelim_estimate_control}
    Let $\varphi(\uop_h)$ solve the continuous adjoint equation \eqref{eq:adjoint_strong}, with continuous state corresponding to $\uop_h$, and $\adjop_h(\uop_h)$ solve the discrete adjoint equation \eqref{eq:adjoint_weak_discrete}, with discrete state corresponding to $\uop_h$.
    Under Assumptions~\ref{ass:ssc} and \ref{ass:local_exist_disc_contr}, there exists a constant $C > 0$ such that
    \begin{equation}\label{eq:control_estimate}
       \delta \normLt{\overline{u} - \overline{u}_h}\Omega \le C \norm{\varphi(\uop_h)-\adjop_h(\uop_h)}_{L^2(\Omega)} .
    \end{equation}
\end{theorem}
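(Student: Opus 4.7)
The plan is to couple the quadratic growth condition of Corollary~\ref{cor:quad_growth} with the two first-order optimality inequalities (continuous \eqref{eq:control_weak} and discrete \eqref{eq:control_weak_discrete}), tested against each other's optimal control (suitably projected into the admissible set), and then to extract $\|\varphi(\uop_h) - \adjop_h(\uop_h)\|_{L^2}$ via Cauchy--Schwarz. The structural identity behind the argument is $\cJ'(u) = \varphi(\cS(u)) + \alpha u$ and $\cJ_h'(u_h) = \adjop_h(y_h(u_h)) + \alpha u_h$ from Remarks~\ref{rem:functional_derivative} and~\ref{rem:functional_derivative_discrete}.

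\textbf{Key steps.} First, since Assumption~\ref{ass:local_exist_disc_contr} guarantees $\|\uop_h - \uop\|_{L^p(\Omega)}$ is small, I would apply Corollary~\ref{cor:quad_growth} with $u = \uop_h$ to obtain
\[
\tfrac{\delta}{2}\|\uop - \uop_h\|_{L^2(\Omega)}^2 \le \pairLt{\cJ'(\uop_h) - \cJ'(\uop),\,\uop_h - \uop}{\Omega}.
\]
Second, I would invoke the continuous first-order inequality \eqref{eq:control_weak} with test $u = \uop_h \in U_{ad}$ to eliminate $\cJ'(\uop)$, leaving
\[
\tfrac{\delta}{2}\|\uop - \uop_h\|_{L^2}^2 \le \pairLt{\varphi(\uop_h) + \alpha \uop_h,\,\uop_h - \uop}{\Omega}.
\]
Third, I would add and subtract $\adjop_h(\uop_h) + \alpha \uop_h$ to split the right-hand side as the sum of $\pairLt{\varphi(\uop_h) - \adjop_h(\uop_h),\,\uop_h - \uop}{\Omega}$ (the target term) plus $\pairLt{\adjop_h(\uop_h) + \alpha\uop_h,\,\uop_h - \uop}{\Omega}$. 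Fourth, I would choose a suitable projection $u_h \in U_{ad}^h$ of $\uop$ (the $L^2$-projection in the $\mathbb{P}_0$ case, or the refined projection of \cite[Theorem~6.1]{HAntil_RHNochetto_PSodre_2013a} for $\mathbb{P}_1$) and use the discrete variational inequality \eqref{eq:control_weak_discrete} with this $u_h$ to conclude
\[
\pairLt{\adjop_h(\uop_h) + \alpha\uop_h,\,\uop_h - \uop}{\Omega} \le \pairLt{\adjop_h(\uop_h) + \alpha\uop_h,\,u_h - \uop}{\Omega}.
\]
Finally, Cauchy--Schwarz on the target term and absorption/estimation of the projection remainder, followed by division by $\|\uop - \uop_h\|_{L^2}$, delivers \eqref{eq:control_estimate}.

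\textbf{Main obstacle.} The delicate point is Step~4: the projection $u_h$ must be engineered so that $\pairLt{\adjop_h(\uop_h) + \alpha\uop_h,\,u_h - \uop}{\Omega}$ is dominated by the target adjoint-error term (or by a quantity already appearing on the left that can be absorbed via Young's inequality). For piecewise-constant controls, the $L^2$-orthogonality of $\uop - Q_h\uop$ against $\mathbb{P}_0$ allows one to replace $\adjop_h(\uop_h) + \alpha\uop_h$ by its difference with a piecewise-constant function, and the regularity $\adjop_h(\uop_h) + \alpha\uop_h \in W^{1,\infty}(\Omega)$ inherited from Corollaries~\ref{cor:regularity_adjoint}--\ref{cor:regularity_control} (plus a discrete analogue) gives a clean bound. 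For piecewise-linear controls, the argument is more subtle since $L^2$-orthogonality alone is insufficient, and one must exploit a projection tailored to $U_{ad}^h$ together with the $W^2_p$-regularity of $\uop$. Managing the admissibility constraint $\|u_h\|_{L^p(\Omega)} \le \theta$ inside the projection (as described in \secref{s:numerics}, via rescaling) is the remaining technical hurdle.
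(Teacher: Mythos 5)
Your Steps 1--4 are exactly the paper's argument: quadratic growth \eqref{eq:gradJ_quad_growth} with $u=\uop_h$ (justified by Assumption~\ref{ass:local_exist_disc_contr}), the continuous inequality \eqref{eq:control_weak} tested with $\uop_h$, insertion of $\cJ_h'(\uop_h)=\adjop_h(\uop_h)+\alpha\uop_h$ (Remark~\ref{rem:functional_derivative_discrete}), and the discrete inequality \eqref{eq:control_weak_discrete} tested with a projection of $\uop$. The genuine gap is in your Step 5, i.e.\ in how the leftover term $\pairLt{\adjop_h(\uop_h)+\alpha\uop_h,\,u_h-\uop}{\Omega}$ is disposed of. The asserted estimate \eqref{eq:control_estimate} carries \emph{no} additional consistency term, so this term must disappear identically (or be dominated by the adjoint error itself); your plan to bound it by a projection/interpolation error combined with $W^1_\infty$ regularity produces an extra additive contribution of order $h$ (or $h^2$) that does not contain the factor $\normLt{\uop-\uop_h}{\Omega}$, hence cannot be absorbed by Young's inequality into the left-hand side, and after division you obtain only $\delta\normLt{\uop-\uop_h}{\Omega}\le C\norm{\varphi(\uop_h)-\adjop_h(\uop_h)}_{L^2(\Omega)}+C h^{s}$ rather than \eqref{eq:control_estimate}.

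The paper's mechanism is exact cancellation, not estimation: choose $u_h=P_h\uop$ with $P_h$ the $L^2$-orthogonal projection onto $U_{ad}^h$, split $\uop_h-\uop=(\uop_h-P_h\uop)+(P_h\uop-\uop)$, remove the first piece with \eqref{eq:control_weak_discrete}, and observe that $\pairLt{\cJ_h'(\uop_h),\,P_h\uop-\uop}{\Omega}=0$ because $\cJ_h'(\uop_h)=\adjop_h(\uop_h)+\alpha\uop_h$ is itself a finite element function (adjoint and control are both discretized in the $\mathbb{P}_1$ space $Y^h$), so the $L^2$-orthogonality of $P_h\uop-\uop$ to the discrete space kills the pairing outright; what survives is only $\pairLt{\varphi(\uop_h)-\adjop_h(\uop_h),\,\uop_h-\uop}{\Omega}$, and Cauchy--Schwarz finishes. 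Note also that your ``main obstacle'' discussion has the two discretizations inverted: it is precisely the $\mathbb{P}_1$ control discretization used here for which $L^2$-orthogonality suffices for exact vanishing, whereas a $\mathbb{P}_0$ control paired with a $\mathbb{P}_1$ adjoint would force you to subtract a piecewise-constant average of $\adjop_h(\uop_h)$ and would leave precisely the kind of remainder your argument generates ($\mathbb{P}_0$ enters the paper only in the remark on verifying Assumption~\ref{ass:local_exist_disc_contr}). The point you do correctly flag --- that the projection must land in $U_{ad}^h$ while retaining the orthogonality property --- is the part the paper delegates to the extension of the projection argument of \cite{HAntil_RHNochetto_PSodre_2013a}, but it is not addressed by the interpolation-type bounds you propose.
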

\begin{proof}
    The proof is based on \cite{HAntil_RHNochetto_PSodre_2013a}, we only state the key steps here. The idea is to
    replace $u$ by $\overline{u}_h$ in \eqref{eq:control_weak} and $u_h$ by $P_h \overline{u}$ in \eqref{eq:control_weak_discrete}, where $P_h$ is the $L^2$ orthogonal projection onto $U_{ad}^h$.  This gives
    \begin{equation}\label{eq:control_both}
        \pair{\cJ'(\uop), \uop_h - \uop} \ge 0 , \quad
        \pair{\cJ_h'(\uop_h), P_h\uop - \uop_h} \ge 0 .
    \end{equation}
    Using \eqref{eq:gradJ_quad_growth}, and replacing $u$ by $\uop_h$ (here we use Assumption~\ref{ass:local_exist_disc_contr}), we have
    \[
        \frac{\delta}{2} \normLt{\uop_h-\uop}{\Omega}^2
			 \leq \pairLt{\cJ'(\uop_h)  - \cJ'(\uop), \uop_h - \uop}{\Omega}  .
    \]
    Adding and subtracting $\cJ'_h(\uop_h)$ followed by using first inequality in \eqref{eq:control_both} we obtain
    \begin{align*}	
			\frac{\delta}{2} \normLt{\uop_h-\uop}{\Omega}^2 &\le \pairLt{\cJ'(\uop_h) - \cJ_h'(\uop_h), \uop_h - \uop}{\Omega}
			+ \pairLt{\cJ_h'(\uop_h), \uop_h - \uop}{\Omega} .
       \end{align*}
    Adding and subtracting $P_h \uop$ to $\uop_h-u$ in the second term, and using the fact that $P_h$ is an orthogonal projection, we have $\pair{\cJ'_h(\uop_h),P_h \uop - \uop} = 0$. Therefore, invoking the second inequality in \eqref{eq:control_both}, we deduce \eqref{eq:control_estimate} from Remark \ref{rem:functional_derivative_discrete} and the Cauchy-Schwarz inequality.
\end{proof}
%
%

It is clear from \thmref{thm:prelim_estimate_control} that in order to prove the estimate for the control we need to estimate the solution to the continuous and discrete adjoint equations but both for the discrete optimal control $\uop_h$. In view of \eqref{eq:adjoint_strong} and \eqref{eq:adjoint_weak_discrete}, we need to estimate the solution to the continuous state equation $\yop(\uop_h)$ and the discrete state equation $\yop_h(\uop_h)$ both for the discrete control $\uop_h$. In the sequel, we use such an estimate from \thmref{thm:error_state}, but first we derive an estimate for the adjoint.

\begin{lemma}[error estimate on the adjoint]
\label{lem:error_estimate_adjoint}
Let $\varphi$ solve \eqref{eq:adjoint_strong}, with right-hand-side $y - y_d$, and $\varphi_h$ solve \eqref{eq:adjoint_weak_discrete}, with right-hand-side $y_h - y_d$.  Then there exists a constant $C > 0$ such that
\begin{equation}\label{eq:estimate_adjoint}
    \normSZ{\adjoint - \adjoint_h}12\Omega \le C \del{h \normS{\adjoint}22\Omega + \normSZ{y-y_h}1\infty\Omega} .
\end{equation}
\end{lemma}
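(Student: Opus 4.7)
The plan is a Céa-type argument: split the error via an interpolant, get the near-best part from nodal interpolation of $\adjoint$, and estimate the consistency part through the Galerkin orthogonality modulo a perturbation coming from $A[y]\neq A[y_h]$ and $y-y_h\neq 0$.

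First I would write $\adjoint-\adjoint_h = (\adjoint - I_h\adjoint) + (I_h\adjoint - \adjoint_h)$. The first summand is standard: since $\adjoint\in \sob22\Omega$ (by Corollary~\ref{cor:regularity_adjoint}, applied to the state/adjoint pair used here), $\normSZ{\adjoint-I_h\adjoint}12\Omega\le Ch\normS{\adjoint}22\Omega$. So it remains to bound $e_h := I_h\adjoint - \adjoint_h\in\mathring{Y}^h$ in the $\sobZ12\Omega$ seminorm.

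For the discrete part, test the continuous adjoint \eqref{eq:adjoint_strong} against $z_h\in\mathring{Y}^h\subset\sobZ12\Omega$ and subtract the discrete adjoint \eqref{eq:adjoint_weak_discrete}, to get
\[
\int_\Omega \grad z_h^T A[y_h]\grad e_h = \int_\Omega \grad z_h^T A[y_h]\grad(I_h\adjoint-\adjoint) + \int_\Omega \grad z_h^T\bigl(A[y_h]-A[y]\bigr)\grad\adjoint + \int_\Omega (y-y_h)z_h
\]
for all $z_h\in\mathring{Y}^h$. Choosing $z_h=e_h$, I would invoke the uniform positive definiteness of $A[y_h]$ (exactly the computation in \lemref{eq:positive_coeff}, now applied to $A[y_h]=\cQ(y_h)^{-3}(\cQ(y_h)^2 I - \grad y_h\grad y_h^T)$, with $\|\grad y_h\|_{L^\infty}$ controlled through \thmref{thm:error_state} and the bound on $\|\grad y\|_{L^\infty}$) to obtain $\int_\Omega \grad e_h^T A[y_h]\grad e_h \ge c\,\normSZ{e_h}12\Omega^2$ for some $c>0$ independent of $h$.

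For the right-hand side: term one is estimated by $\|A[y_h]\|_{L^\infty}\normSZ{I_h\adjoint-\adjoint}12\Omega\normSZ{e_h}12\Omega \le Ch\normS{\adjoint}22\Omega\normSZ{e_h}12\Omega$; term two, using the pointwise Lipschitz estimate \eqref{eq:A_Lipschitz_Linf} of \lemref{lem:A_Lipschitz}, is bounded by $C\normSZ{y-y_h}1\infty\Omega\normSZ{\adjoint}12\Omega\normSZ{e_h}12\Omega$, and $\normSZ{\adjoint}12\Omega$ is a fixed constant (depending on $\uop_h$ only through uniform bounds); term three is handled by Cauchy--Schwarz plus Poincaré, $\|y-y_h\|_{L^2}\|e_h\|_{L^2}\le C|\Omega|^{1/2}\normSZ{y-y_h}1\infty\Omega\,\normSZ{e_h}12\Omega$. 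Dividing by $\normSZ{e_h}12\Omega$, absorbing the $\normSZ{\adjoint}12\Omega$ factor into the constant, and combining with the interpolation bound via the triangle inequality yields \eqref{eq:estimate_adjoint}.

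The main obstacle is the uniform coercivity of the bilinear form associated with $A[y_h]$: this requires an a priori $L^\infty$ bound on $\grad y_h$, which is not automatic for the discrete solution but is provided by \thmref{thm:error_state} (together with the $W^1_\infty$ bound on $\grad y$ coming from $y\in\mathbb{B}$ and the Sobolev embedding $\sob2p\Omega\subset\subset\sob1\infty\Omega$ for $p>n$). A minor but important point is that the Lipschitz estimate of \lemref{lem:A_Lipschitz} applies directly to $A[y]-A[y_h]$ only because both $y$ and $y_h$ lie in (a discrete analogue of) $\mathbb{B}$; this is precisely what \thmref{thm:error_state} guarantees for $h$ sufficiently small.
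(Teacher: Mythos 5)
Your proposal is correct and follows essentially the same route as the paper: split off the interpolation error $\adjoint - I_h\adjoint$, derive the perturbed Galerkin (error) equation for $I_h\adjoint - \adjoint_h$, bound the consistency terms via the interpolation estimate, the Lipschitz bound \eqref{eq:A_Lipschitz_Linf} for $A[y]-A[y_h]$, and the $y-y_h$ data term, and conclude with the triangle inequality. The only cosmetic difference is that you obtain discrete stability by testing with $e_h$ and using uniform positive definiteness of $A[y_h]$ (justified through the $L^\infty$ gradient bound on $y_h$ from Theorem~\ref{thm:error_state}), whereas the paper invokes the discrete inf-sup condition of Brenner--Scott; for this symmetric, uniformly elliptic coefficient the two are equivalent.
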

\begin{proof}
Using the discrete inf-sup condition from \cite[Proposition~8.6.2]{SCBrenner_RLScott_2008a} and $I_h$, we have
\[
    \normSZ{\adjoint_h - I_h \adjoint}12\Omega
        \le C \sup_{z \in \mathring{Y}^h} \frac{\int_\Omega \grad z^T A(y_h) \grad (\adjoint_h -I_h \adjoint)}{\normSZ{z}12\Omega} .
\]
In view of \eqref{eq:adjoint_weak_discrete} we obtain
\begin{align*}
\normSZ{\adjoint_h - I_h \adjoint}12\Omega
    &\le C \sup_{z \in \mathring{Y}^h} \frac{\int_\Omega (y_h - y_d) z -  \grad z^T A(y_h) \grad I_h \adjoint}{\normSZ{z}12\Omega} \\
    &= C \sup_{z \in \mathring{Y}^h} \frac{\int_\Omega (y_h - y_d) z - (y-y_d)z + \grad z^T A(y) \grad \adjoint -  \grad z^T A(y_h) \grad I_h \adjoint}{\normSZ{z}12\Omega}
\end{align*}
where the last equality follows immediately using \eqref{eq:adjoint_strong}. Invoking Cauchy-Schwarz, we readily obtain
\begin{align*}
    \normSZ{\adjoint_h - I_h \adjoint}12\Omega
        &\le C \Big( \normS{y_h-y}{-1}2\Omega
         + \norm{A(y)}_{L^\infty(\Omega)} \normSZ{\adjoint - I_h \adjoint}12\Omega \\
        &\quad + \norm{A(y)-A(y_h)}_{L^\infty(\Omega)} \normSZ{I_h \adjoint}12\Omega \Big) .
\end{align*}
In view of \lemref{lem:A_Lipschitz} we deduce
\[
    \normSZ{\adjoint_h - I_h \adjoint}12\Omega \le C \del{h \normS{\adjoint}22\Omega + \normSZ{y-y_h}1\infty\Omega} .
\]
The estimate \eqref{eq:estimate_adjoint} follows readily using triangle inequality.
\end{proof}
\begin{corollary}\label{cor:final_error_est}
Let Assumptions~\ref{ass:ssc} and \ref{ass:local_exist_disc_contr} hold. Furthermore, let $\adjop(\uop_h)$ be the solution of the continuous adjoint equation \eqref{eq:adjoint_strong} and $\yop(\uop_h)$ the solution of the continuous state equation \eqref{eq:state_weak} with control $\uop_h$. Furthermore, let $\adjop_h(\uop_h)$ be the solution of the discrete adjoint equation \eqref{eq:adjoint_weak_discrete} and $\yop_h(\uop_h)$ the solution of the discrete state equation \eqref{eq:state_weak_discrete} with control $\uop_h$. If $h \le h_0$, for $h_0 > 0$ sufficiently small, then there is a constant $C \ge 1$ depending on $\normS{\yop}2p\Omega$, $\normS{\adjop}22\Omega$, $\norm{y_d}_{L^p(\Omega)}$, such that
\[
    \normSZ{y(\uop_h)-\yop_h(\uop_h)}1\infty\Omega + \normSZ{\varphi(\uop_h)-\adjop_h(\uop_h)}12\Omega + \delta \norm{\uop-\uop_h}_{L^2(\Omega)} \le C h |\log h|^4 .
\]
\end{corollary}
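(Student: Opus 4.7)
The proof will chain together the three error estimates already established (\thmref{thm:error_state} for the state, \lemref{lem:error_estimate_adjoint} for the adjoint, and \thmref{thm:prelim_estimate_control} for the control) and verify that the constants that arise in each are uniform in $h$. The point is that all three bounds are evaluated at $\uop_h \in U_{ad} \subset U$, so the quantities on which the constants depend (namely $W^2_p$-norms of the state $\yop(\uop_h)$ and $W^2_2$-norms of the adjoint $\adjop(\uop_h)$) are controlled uniformly.

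The plan is to proceed in three steps. First, since $\uop_h \in U_{ad}$ satisfies \eqref{eq:cond_T_map_B_to_B} and \eqref{eq:cond_T_contraction}, \thmref{thm:fixed_point} implies $\yop(\uop_h) \in \mathbb{B}$, and in particular $\normS{\yop(\uop_h)}2p\Omega \le B_1$ uniformly. Applying \thmref{thm:error_state} with data $u_h = \uop_h$ gives
\[
    \normSZ{\yop(\uop_h) - \yop_h(\uop_h)}1\infty\Omega \le C(n,p,R_1,\theta,\Omega) \, h \abs{\log h}^4,
\]
where the constant is independent of the particular choice of $\uop_h \in U_{ad}$.

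Second, I would apply \lemref{lem:error_estimate_adjoint} with $y = \yop(\uop_h)$ and $y_h = \yop_h(\uop_h)$ to obtain
\[
    \normSZ{\adjop(\uop_h) - \adjop_h(\uop_h)}12\Omega \le C \del{h \normS{\adjop(\uop_h)}22\Omega + \normSZ{\yop(\uop_h) - \yop_h(\uop_h)}1\infty\Omega}.
\]
Here I need a uniform bound on $\normS{\adjop(\uop_h)}22\Omega$. Since $\yop(\uop_h) \in \sob2p\Omega$ with $\normS{\yop(\uop_h)}2p\Omega \le B_1$, the coefficient $A[\yop(\uop_h)]$ and its divergence are controlled in $\sob1p\Omega$ and $L^p(\Omega)$ respectively (as in the proof of Corollary~\ref{cor:regularity_adjoint}); hence \lemref{lem:regularity_non_div} gives $\normS{\adjop(\uop_h)}22\Omega \le C \normLt{\yop(\uop_h) - y_d}{\Omega}$, and the right-hand side is uniformly bounded by $\normS{\yop(\uop_h)}2p\Omega + \norm{y_d}_{L^p(\Omega)}$. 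Combining with the first step yields
\[
    \normSZ{\adjop(\uop_h) - \adjop_h(\uop_h)}12\Omega \le C \, h \abs{\log h}^4.
\]

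Third, the $L^2(\Omega)$ norm is controlled by the $W^1_2$ norm (via Poincar\'e), so \thmref{thm:prelim_estimate_control} gives
\[
    \delta \normLt{\uop - \uop_h}{\Omega} \le C \, \norm{\adjop(\uop_h) - \adjop_h(\uop_h)}_{L^2(\Omega)} \le C \, h \abs{\log h}^4.
\]
Adding the three bounds concludes the proof. The main subtlety, and the only step that is not an immediate application of earlier results, is verifying that the constants appearing in \thmref{thm:error_state} and \lemref{lem:error_estimate_adjoint} can be made uniform over the admissible control set: this is exactly what the $W^2_p$-continuity estimate of \thmref{thm:fixed_point} delivers, since it pins $\yop(\uop_h)$ inside the fixed-point ball $\mathbb{B}$ for every $\uop_h \in U_{ad}$, which in turn controls the $W^2_2$-regularity of the adjoint through \lemref{lem:regularity_non_div}.
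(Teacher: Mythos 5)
Your proposal is correct and follows essentially the same route as the paper: chain \thmref{thm:error_state}, \lemref{lem:error_estimate_adjoint}, Poincar\'e, and the control estimate \eqref{eq:control_estimate} from \thmref{thm:prelim_estimate_control}. The paper's proof is terser and simply asserts the constant dependencies, whereas you spell out the uniformity of the constants via the fixed-point ball $\mathbb{B}$ and \lemref{lem:regularity_non_div} --- a useful elaboration, but not a different argument.
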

\begin{proof}
By Poincar\'{e}, we have
$
    \norm{\varphi(\uop_h)-\adjop_h(\uop_h)}_{L^2(\Omega)}
     \le C_0 \normSZ{\varphi(\uop_h)-\adjop_h(\uop_h)}12\Omega
$.
Then, combining \lemref{lem:error_estimate_adjoint} with \thmref{thm:error_state}, we deduce
\[
    \normSZ{\varphi(\uop_h) - \adjop_h(\uop_h)}12\Omega \le C^* h \abs{\log h}^4,
\]
with constant $C^*$ having the same dependencies as $C$. This, together with \eqref{eq:control_estimate}, implies the estimate for the control $\norm{\uop-\uop_h}_{L^2(\Omega)}$.  The remaining estimates follow immediately.
\end{proof}
%

\section{Numerical Examples}
\label{s:computations}

\subsection{Setup}

We present numerical examples for the discrete optimal control problem in Section \ref{s:numerics}.  We solve the optimization problem using MATLAB's optimization toolbox with an SQP method, where we provide the gradient information.

The gradient of the cost functional \eqref{eq:cost_discrete}, at each iteration of the optimization algorithm, is computed by first solving the state equation \eqref{eq:state_weak_discrete} for $y_h$ with the control $u_h$ taken from the previous iteration.  Then, the adjoint problem \eqref{eq:adjoint_weak_discrete} is solved for $\adjoint_h$ using the discrete solution $y_h$.  We then define the linear form (see Remark \ref{rem:functional_derivative_discrete})
\begin{equation*}
  \pair{\cJ_h'(u_h), v_h}_{L^2(\Omega),L^2(\Omega)} = \int_{\Omega} (\adjoint_h + \alpha u_h) v_h, \quad \text{for all } v_h \in Y^h,
\end{equation*}
and pass the discrete gradient vector (and cost value) to MATLAB's optimization algorithm at the current iteration.  The constraint on the control $U_{ad}^h$ is handled by MATLAB's optimization algorithm by specifying an inequality constraint on $u_h$.

The non-linear state equation is solved with Newton's method and a direct solver (backslash); we also use a direct solver for the adjoint problem.  This was all implemented in MATLAB using the FELICITY toolbox \cite{FELICITY_REF}.  The following sections show some examples of our computational method.  In all cases, we set $\alpha = 10^{-6}$ and $p = 2.5$.  For most examples, we set $\theta = 20$ in the definition of $U_{ad}^h$, except in Section \ref{sec:sine_on_square_theta=2} where $\theta=2$.  The first two examples are posed on a unit square domain, which technically does not satisfy the $C^{1,1}$ domain assumption.  The last example is posed on a $C^{\infty}$ domain in the shape of a four-leaf clover.

\subsection{Sine On A Square}

\subsubsection{$\theta = 20$}\label{sec:sine_on_square_theta=20}

We take $y_d$ to be a product of sine functions and set the boundary data to $v = 0$.  The domain $\Omega$ is the unit square.  See Figures \ref{fig:Sine_On_Square_Surface} and \ref{fig:Sine_On_Square_Optim} for plots of $y_d$, $\yop$, $\uop$, and the optimization history.  This example shows that we can recover the desired surface almost exactly when the boundary condition $v$ matches $y_d$ on $\partial \Omega$.  Note: for this optimal control, we have $\norm{\uop}_{L^p(\Omega)} \approx 3.75$.
\begin{figure}
\begin{center}
\subfloat{


\includegraphics[width=3.3in]{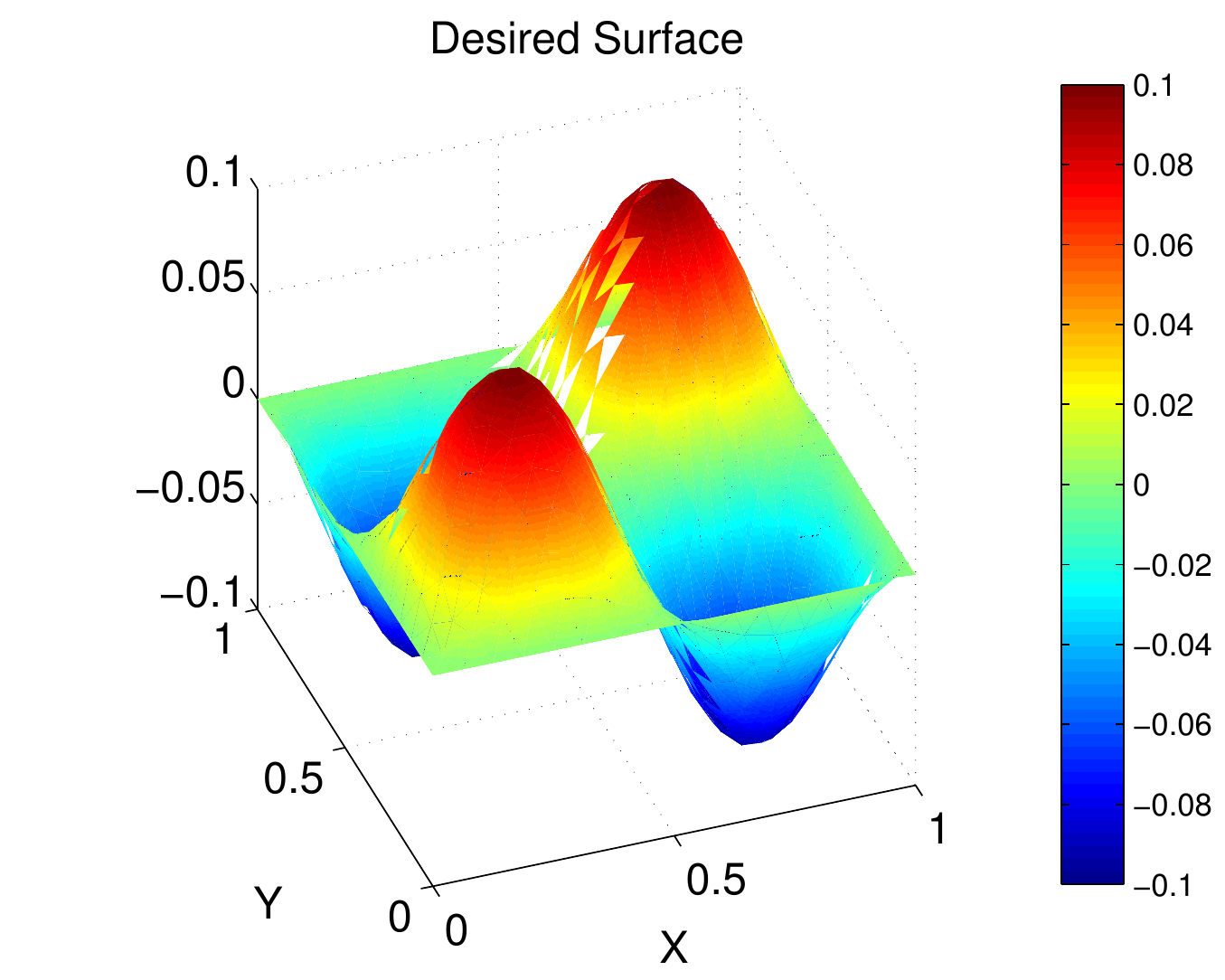}} 
\hspace{-0.3in}
\subfloat{


\includegraphics[width=3.3in]{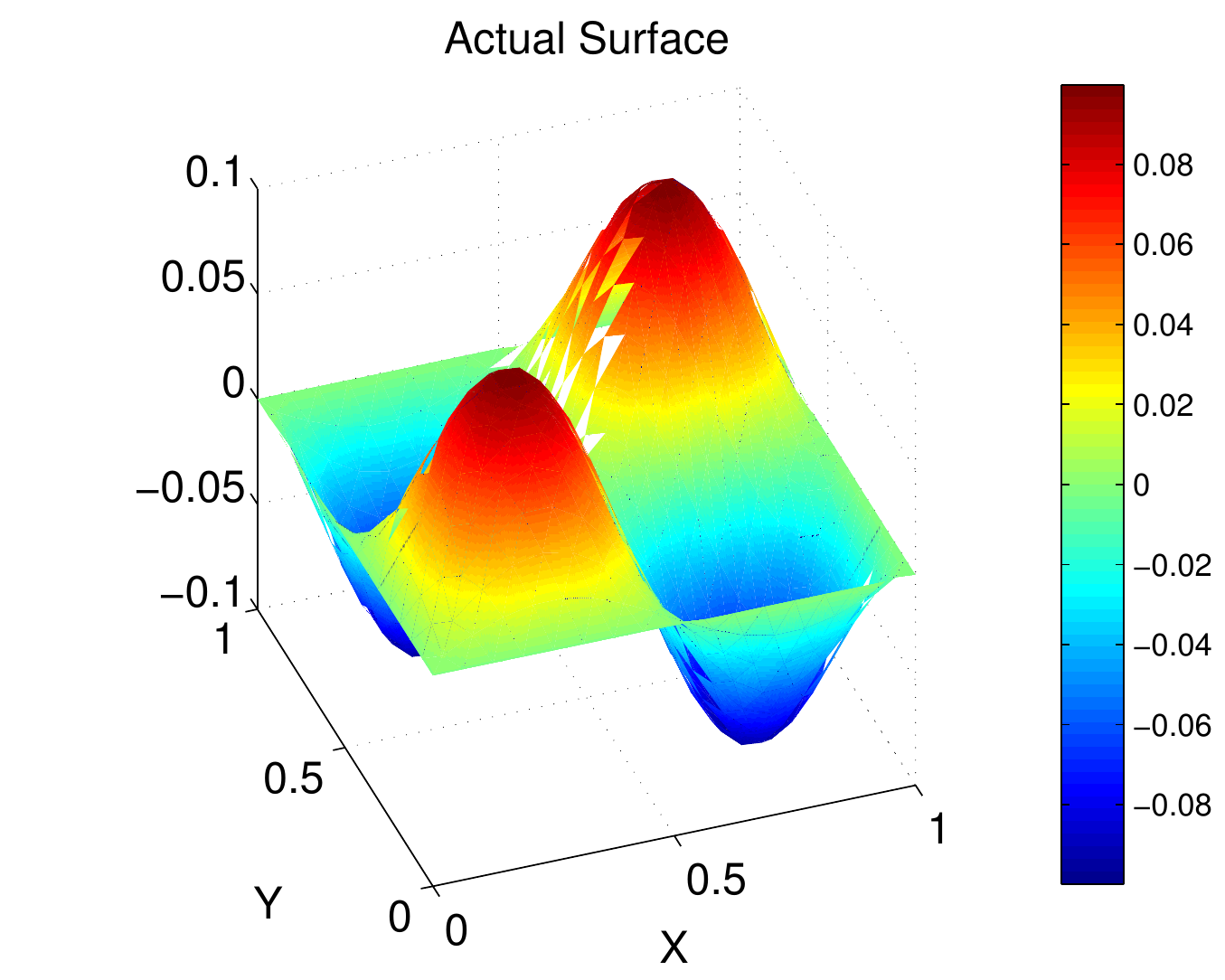}} 
\caption{Left: Desired surface height $y_d = \sin(2 \pi x) \sin(2 \pi y)$.  Right: Actual surface height $\yop$ (after the optimization method converges).  Boundary data is $v = 0$.}
\label{fig:Sine_On_Square_Surface}
\end{center}
\end{figure}
\begin{figure}
\begin{center}
\subfloat{


\includegraphics[width=3.2in]{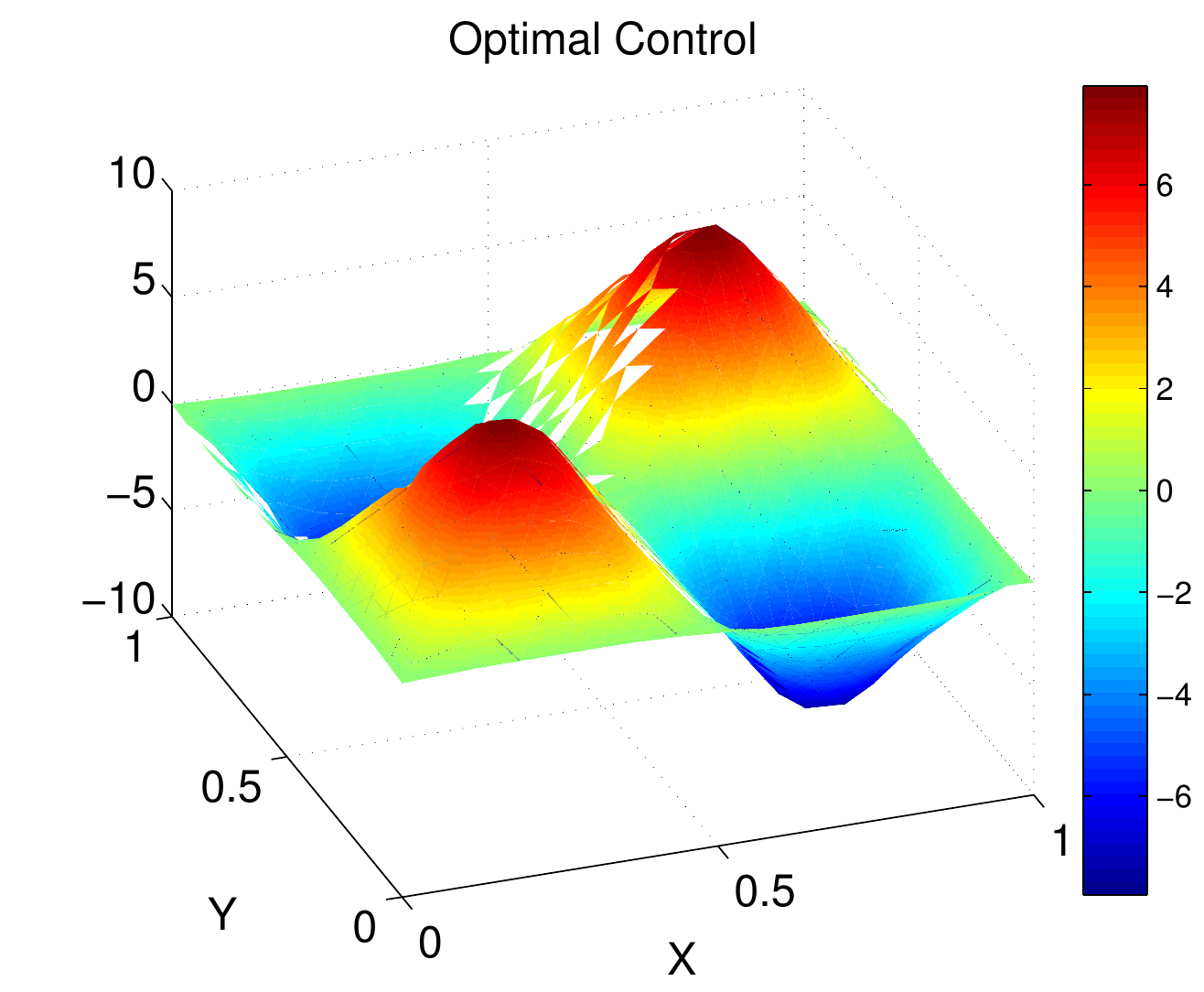}} 
\subfloat{


\includegraphics[width=3.2in]{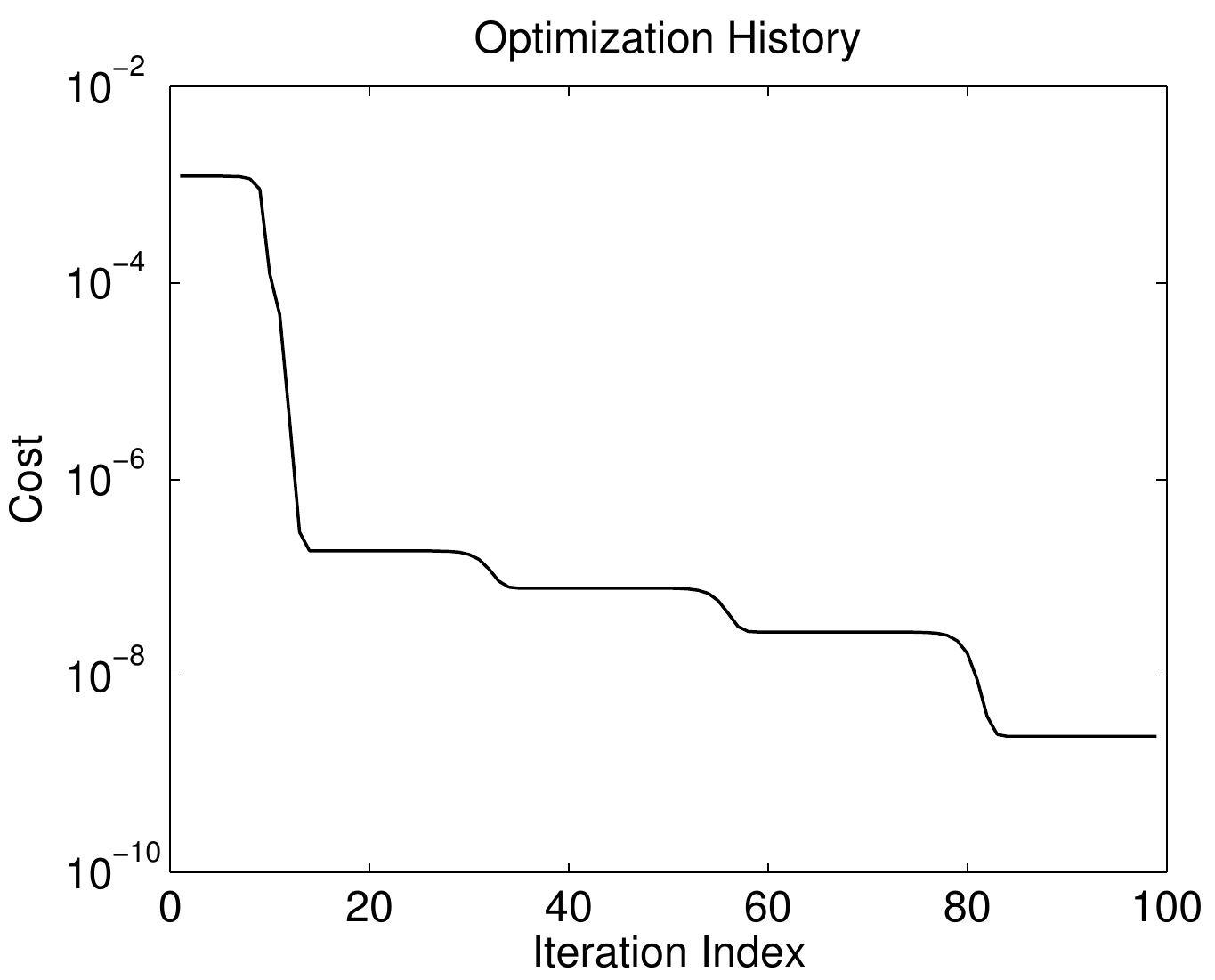}} 
\caption{Left: Optimal control function $\uop$ for $y_d$ in Figure \ref{fig:Sine_On_Square_Surface}.  Right: Decrease of cost functional $\cJ$.}
\label{fig:Sine_On_Square_Optim}
\end{center}
\end{figure}

\subsubsection{$\theta = 2$}\label{sec:sine_on_square_theta=2}

\begin{figure}
\begin{center}
\subfloat{


\includegraphics[width=3.3in]{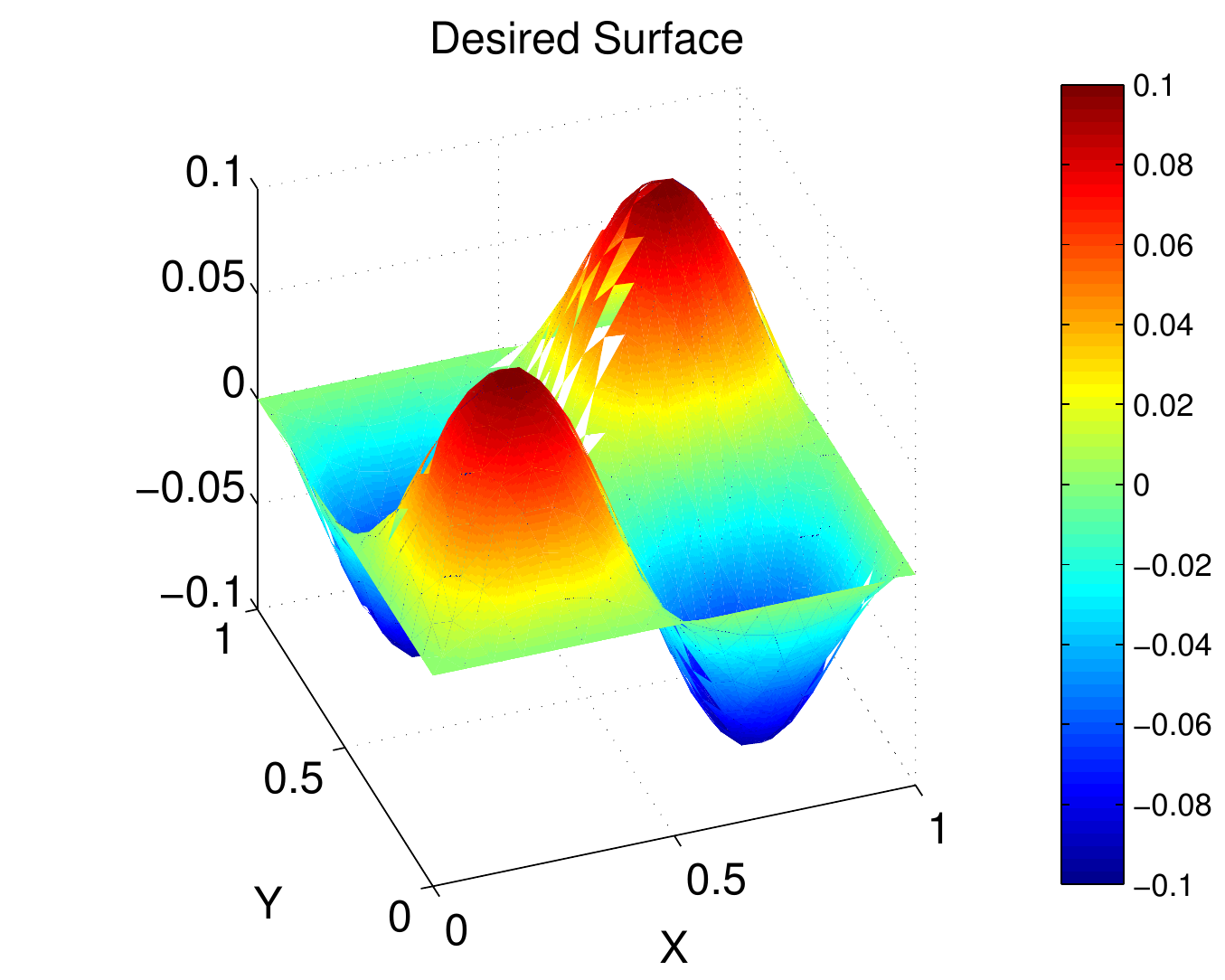}}
\hspace{-0.3in}
\subfloat{


\includegraphics[width=3.3in]{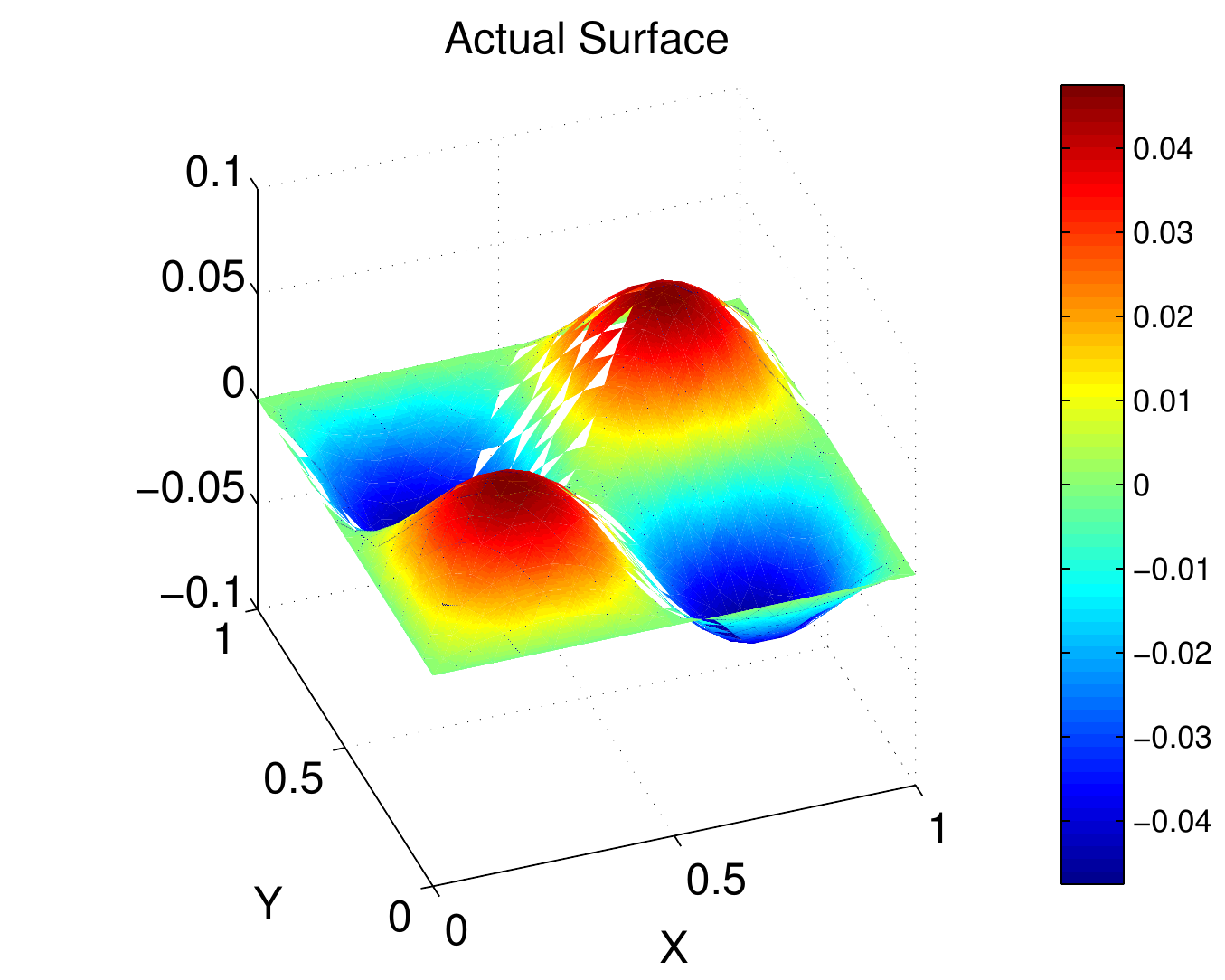}}
\caption{Left: Desired surface height $y_d = \sin(2 \pi x) \sin(2 \pi y)$.  Right: Actual surface height $\yop$ (after the optimization method converges).  Boundary data is $v = 0$.  Note: $\theta=2$ here.}
\label{fig:Sine_On_Square_Small_Theta_Surface}
\end{center}
\end{figure}
\begin{figure}
\begin{center}
\subfloat{


\includegraphics[width=3.2in]{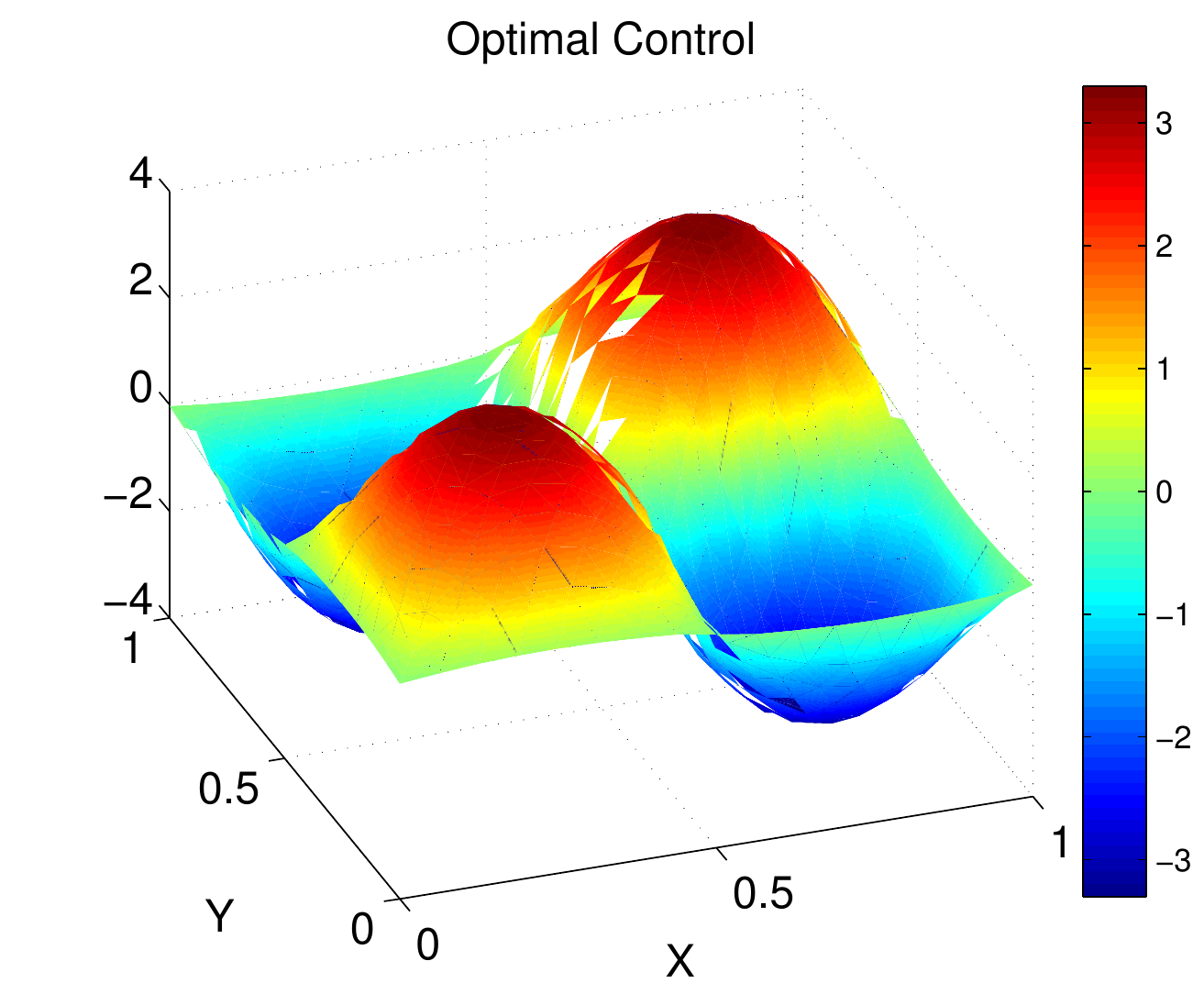}}
\subfloat{


\includegraphics[width=3.2in]{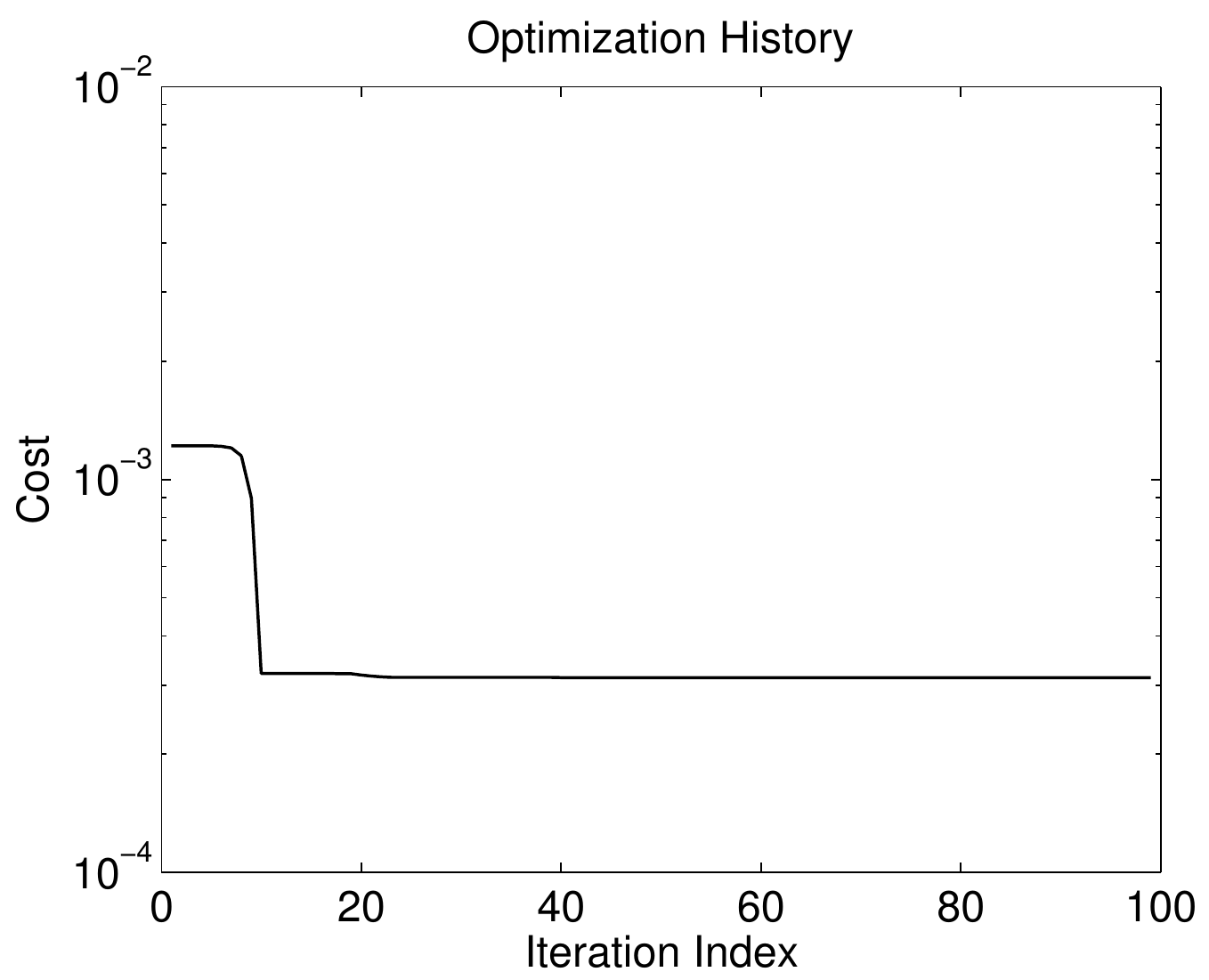}}
\caption{Left: Optimal control function $\uop$ for $y_d$ in Figure \ref{fig:Sine_On_Square_Small_Theta_Surface}.  Right: Decrease of cost functional $\cJ$.}
\label{fig:Sine_On_Square_Small_Theta_Optim}
\end{center}
\end{figure}
We run the same example as in Section \ref{sec:sine_on_square_theta=20}, except we choose a smaller value of $\theta$ to see the impact on the quality of the optimal control; all other parameters are identical.  See Figures \ref{fig:Sine_On_Square_Small_Theta_Surface} and \ref{fig:Sine_On_Square_Small_Theta_Optim} for plots of $y_d$, $\yop$, $\uop$, and the optimization history.  The value of $\norm{\uop}_{L^p(\Omega)}$ in the previous example was $\approx 3.75$.  Here, $\norm{\uop}_{L^p(\Omega)}$ is constrained to be $\leq 2$ (in fact, it is equal to $2$).

It is clear from Figure \ref{fig:Sine_On_Square_Small_Theta_Optim} that the height of the optimal control is less than in Figure \ref{fig:Sine_On_Square_Optim} (note the different scale in the plot).  Moreover, $\uop$ is not as ``peaked'' as before (more rounded), but is qualitatively the same.  This, in turn, affects the obtained surface height $\yop$ in Figure \ref{fig:Sine_On_Square_Small_Theta_Surface}, i.e. it appears to be uniformly scaled with respect to the result in Figure \ref{fig:Sine_On_Square_Surface}.  In other words, the main effect that $\theta$ has is to \emph{scale down} the optimal control, which shrinks the obtained surface height.  But the qualitative shape of $\uop$ and $\yop$ is essentially the same as before.

\subsection{Gaussian On A Square (Nonzero Boundary Condition)}

We take $y_d$ to be a Gaussian bump and set the boundary data to $v = -0.1 \sin (\pi x) \cos (2\pi y)$.  The domain $\Omega$ is the unit square. See Figures \ref{fig:Gaussian_On_Square_Nonzero_BC_Surface} and \ref{fig:Gaussian_On_Square_Nonzero_BC_Optim} for plots of $y_d$, $\yop$, $\uop$, and the optimization history.  In this case, we impose a mismatch between the imposed boundary condition $v$ and the desired surface $y_d$.  The results show that the optimization does the ``best it can'' by trying to match $y_d$ in the interior of $\Omega$.  Note the large value of the control $\uop$ at the boundary of $\Omega$ in Figure \ref{fig:Gaussian_On_Square_Nonzero_BC_Optim}.
\begin{figure}
\begin{center}
\subfloat{


\includegraphics[width=3.3in]{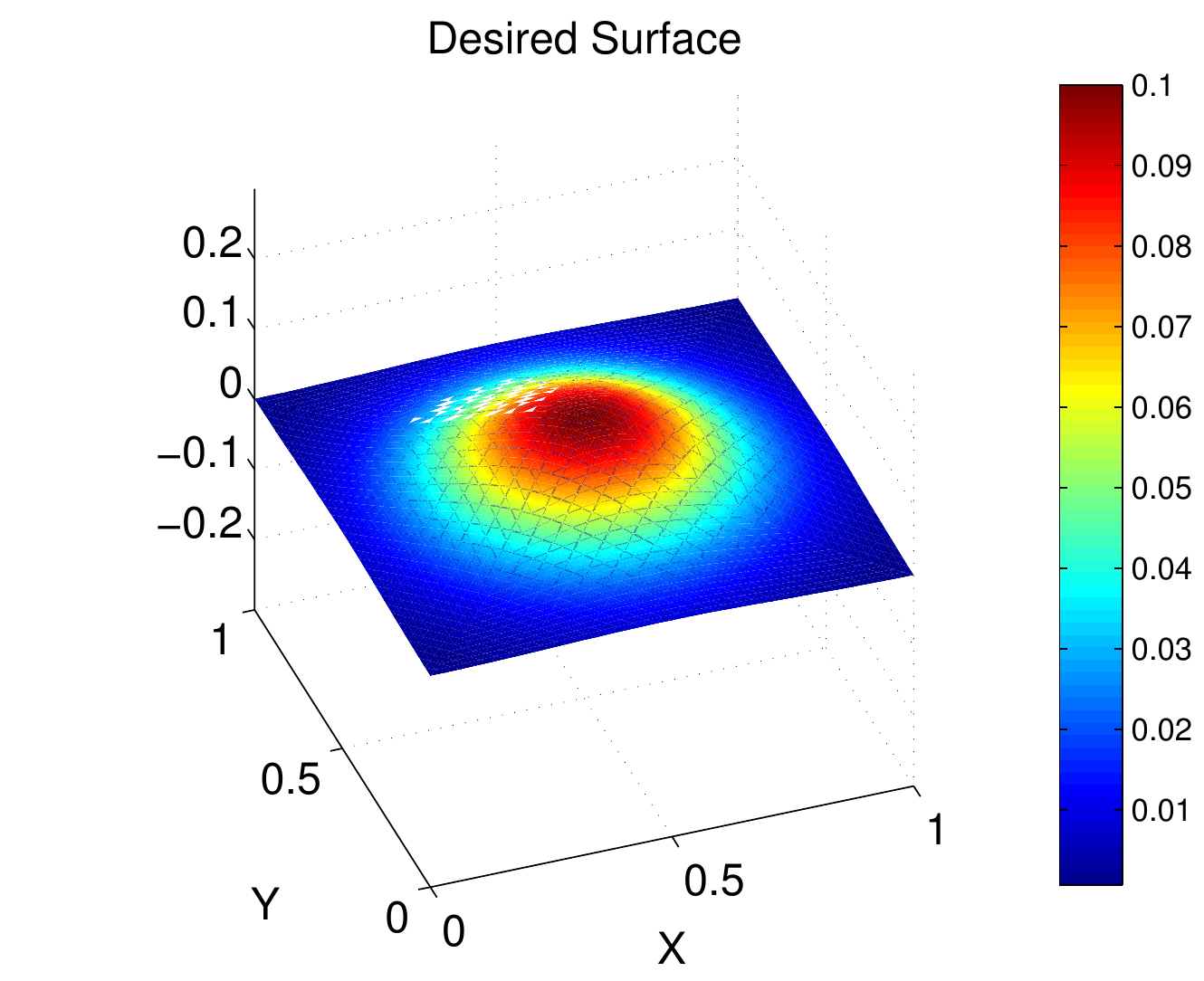}}
\hspace{-0.3in}
\subfloat{


\includegraphics[width=3.3in]{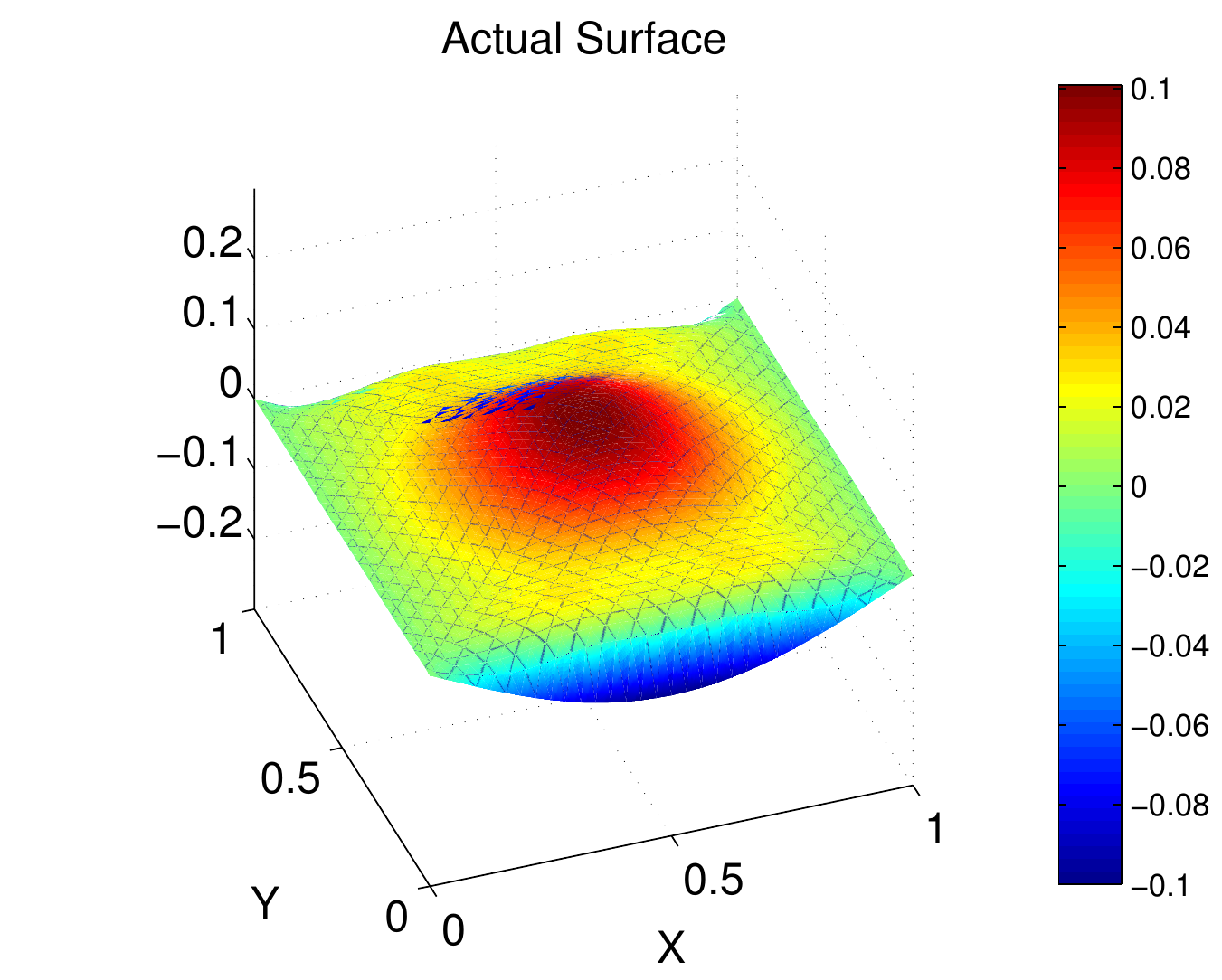}}
\caption{Left: Desired surface height $y_d = 0.1 \exp \left( -((x-0.5)^2 + (y-0.5)^2)/0.1 \right)$.  Right: Actual surface height $\yop$ (after the optimization method converges).  Boundary data is $v = -0.1 \sin (\pi x) \cos (2\pi y)$.}
\label{fig:Gaussian_On_Square_Nonzero_BC_Surface}
\end{center}
\end{figure}
\begin{figure}
\begin{center}
\subfloat{


\includegraphics[width=3.2in]{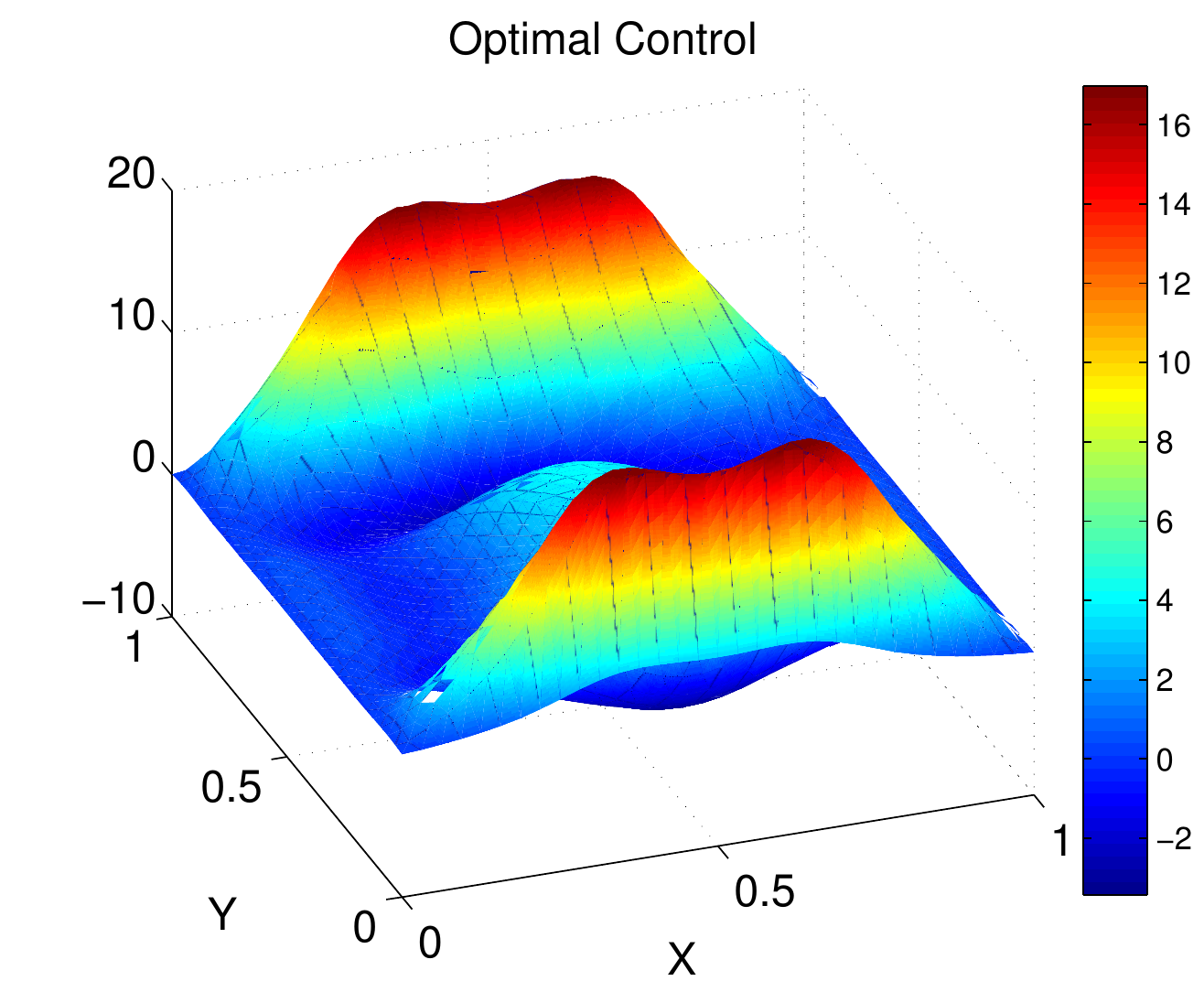}}
\subfloat{


\includegraphics[width=3.2in]{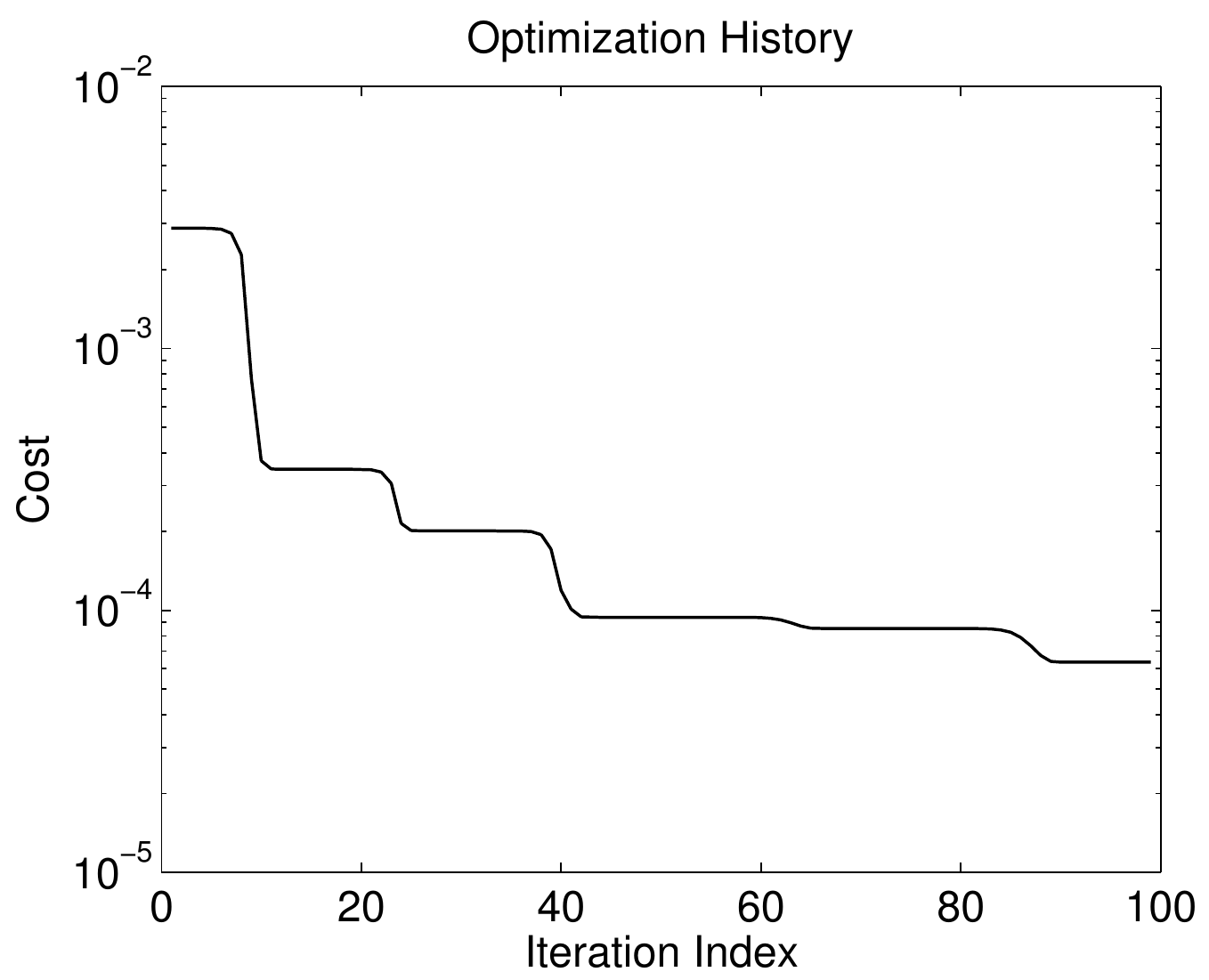}}
\caption{Left: Optimal control function $\uop$ for $y_d$ in Figure \ref{fig:Gaussian_On_Square_Nonzero_BC_Surface}.  Right: Decrease of cost functional $\cJ$.}
\label{fig:Gaussian_On_Square_Nonzero_BC_Optim}
\end{center}
\end{figure}

\subsection{Cosine On A Clover}

We take $y_d$ to be a product of cosine functions and set the boundary data to $v = 0$.  The domain $\Omega$ is a four-leaf clover (smooth domain).  See Figures \ref{fig:Cosine_On_Clover_Surface} and \ref{fig:Cosine_On_Clover_Optim} for plots of $y_d$, $\yop$, $\uop$, and the optimization history. This example also has a mismatch between the imposed boundary condition $v$ and $y_d$.  Again, the optimal surface $\yop$ matches $y_d$ well in the interior of $\Omega$, but not at the boundary.  Moreover, in Figure \ref{fig:Cosine_On_Clover_Optim}, it is evident from the convergence history of the optimization algorithm that the path to the optimal control is non-trivial.
\begin{figure}
\begin{center}
\subfloat{


\includegraphics[width=3.3in]{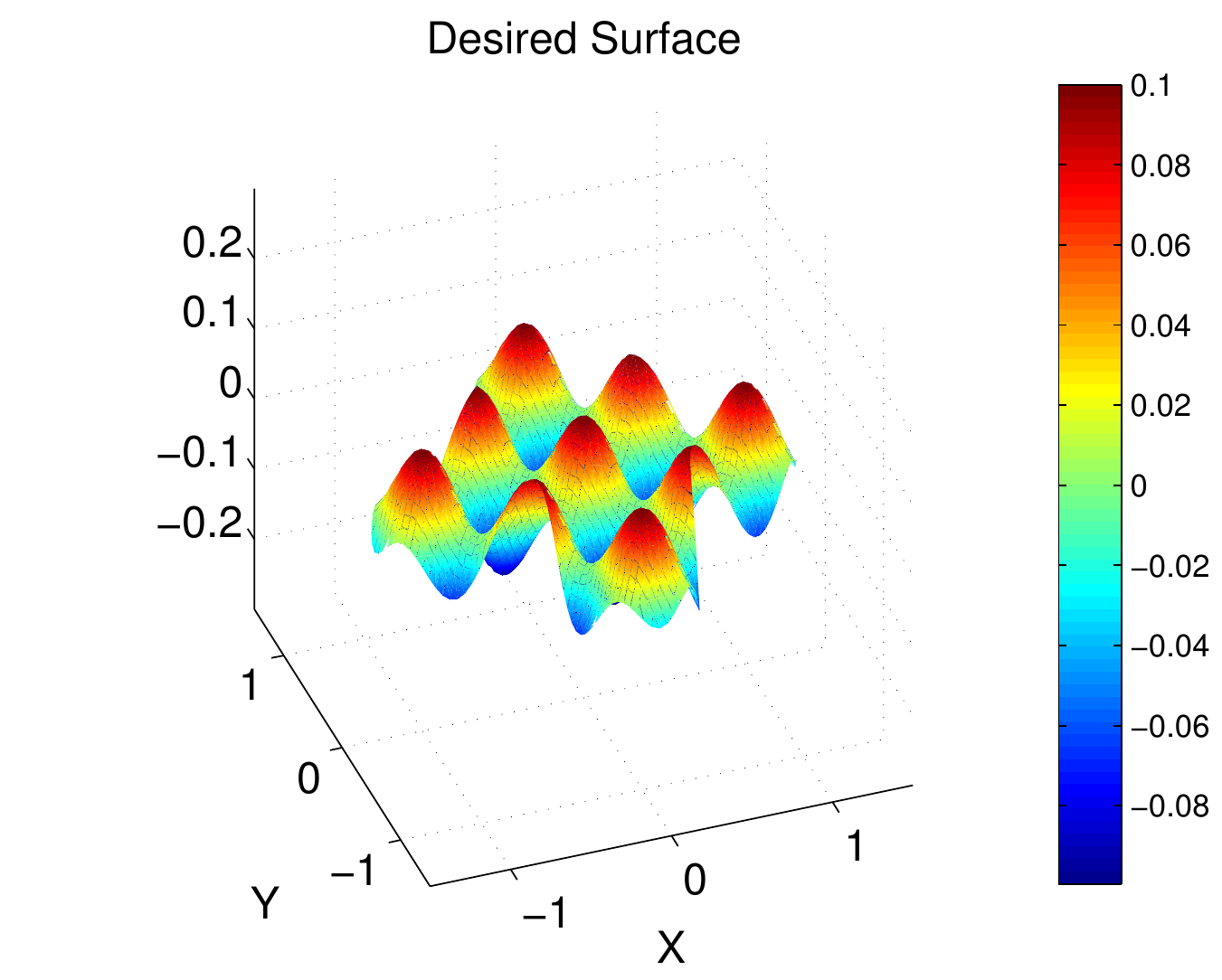}}
\hspace{-0.3in}
\subfloat{


\includegraphics[width=3.3in]{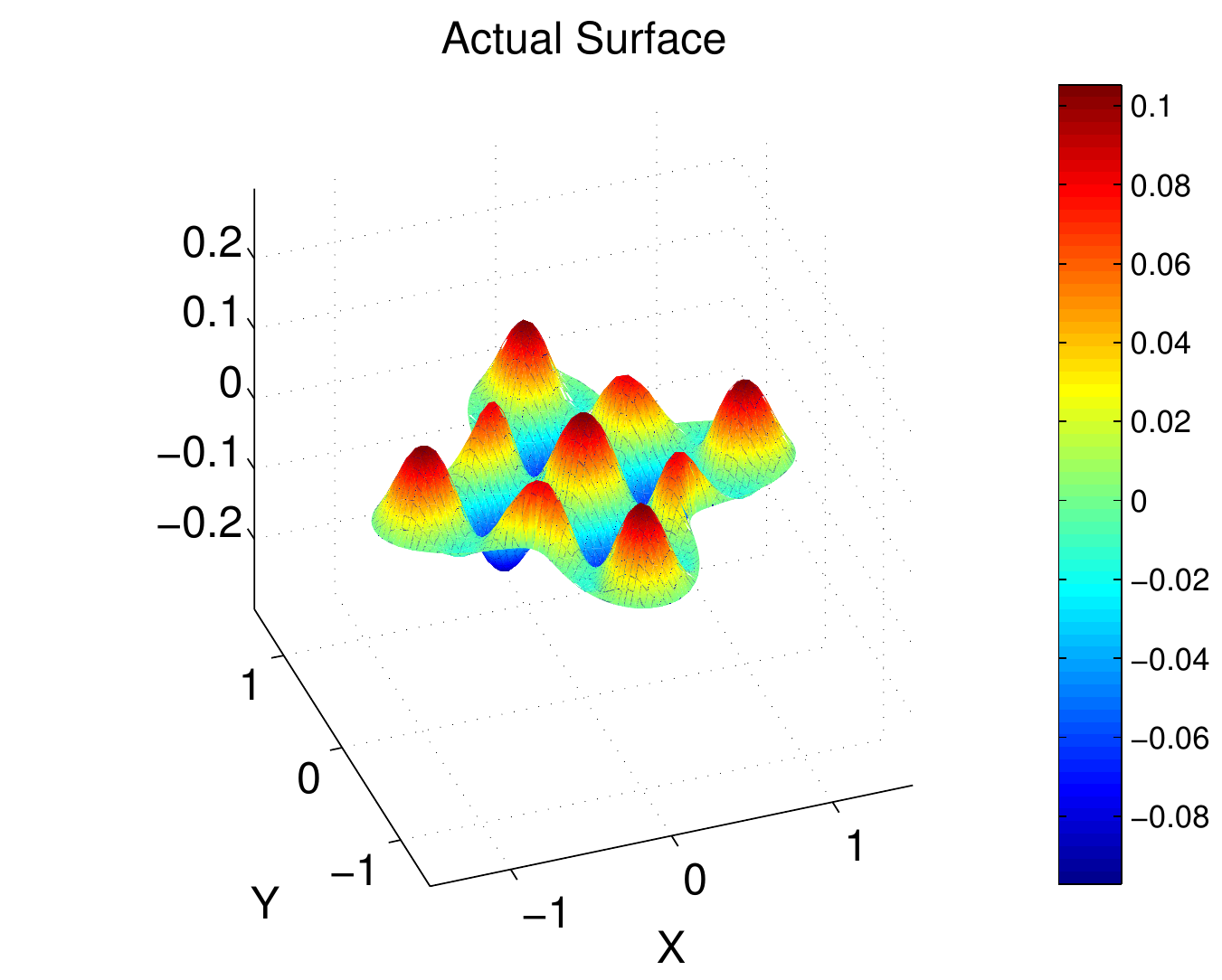}}
\caption{Left: Desired surface height $y_d = 0.1 \cos(2 \pi x) \cos(2 \pi y)$.  Right: Actual surface height $\yop$ (after the optimization method converges).  Boundary data is $v = 0$.}
\label{fig:Cosine_On_Clover_Surface}
\end{center}
\end{figure}
\begin{figure}
\begin{center}
\subfloat{


\includegraphics[width=3.2in]{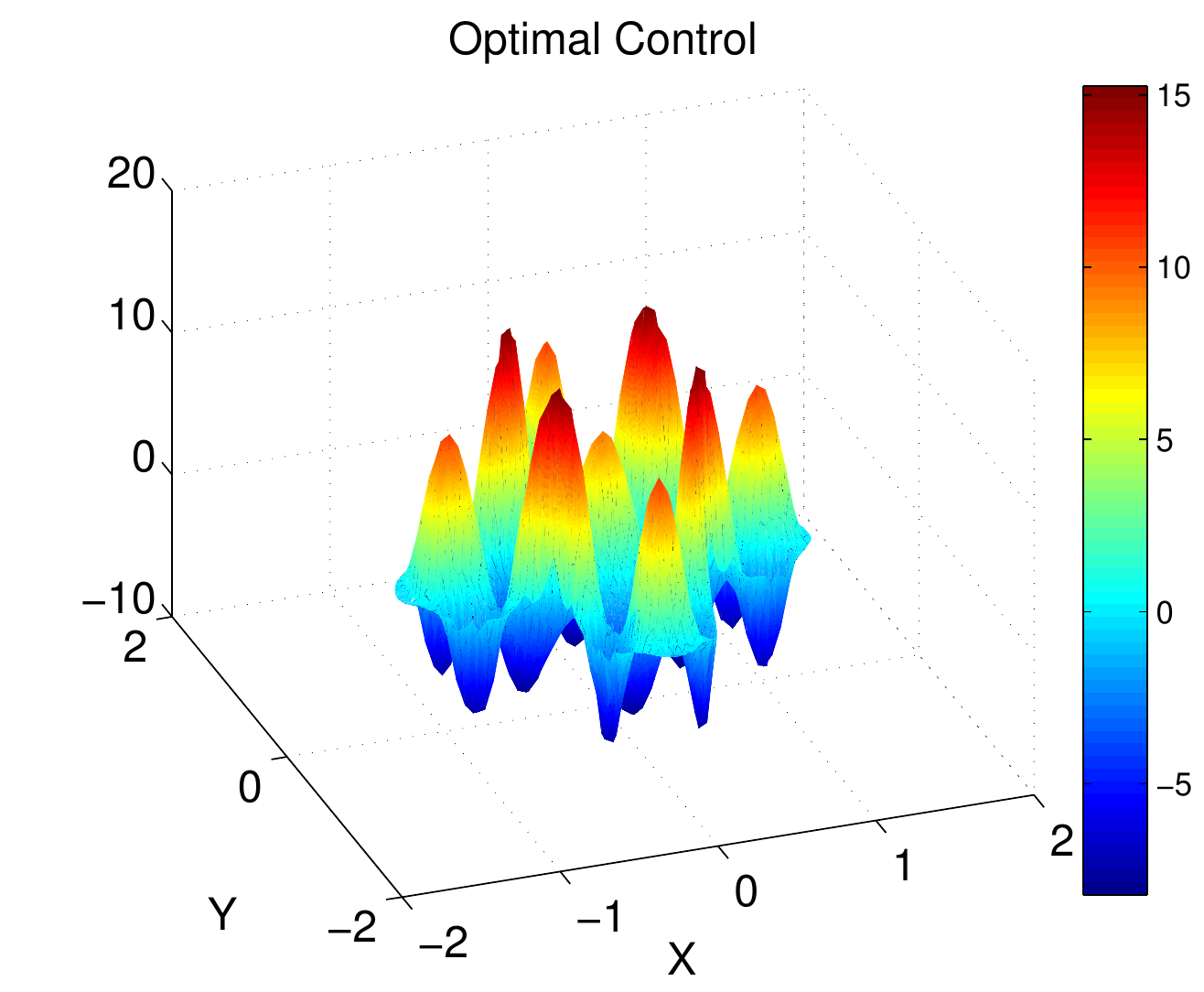}}
\subfloat{


\includegraphics[width=3.2in]{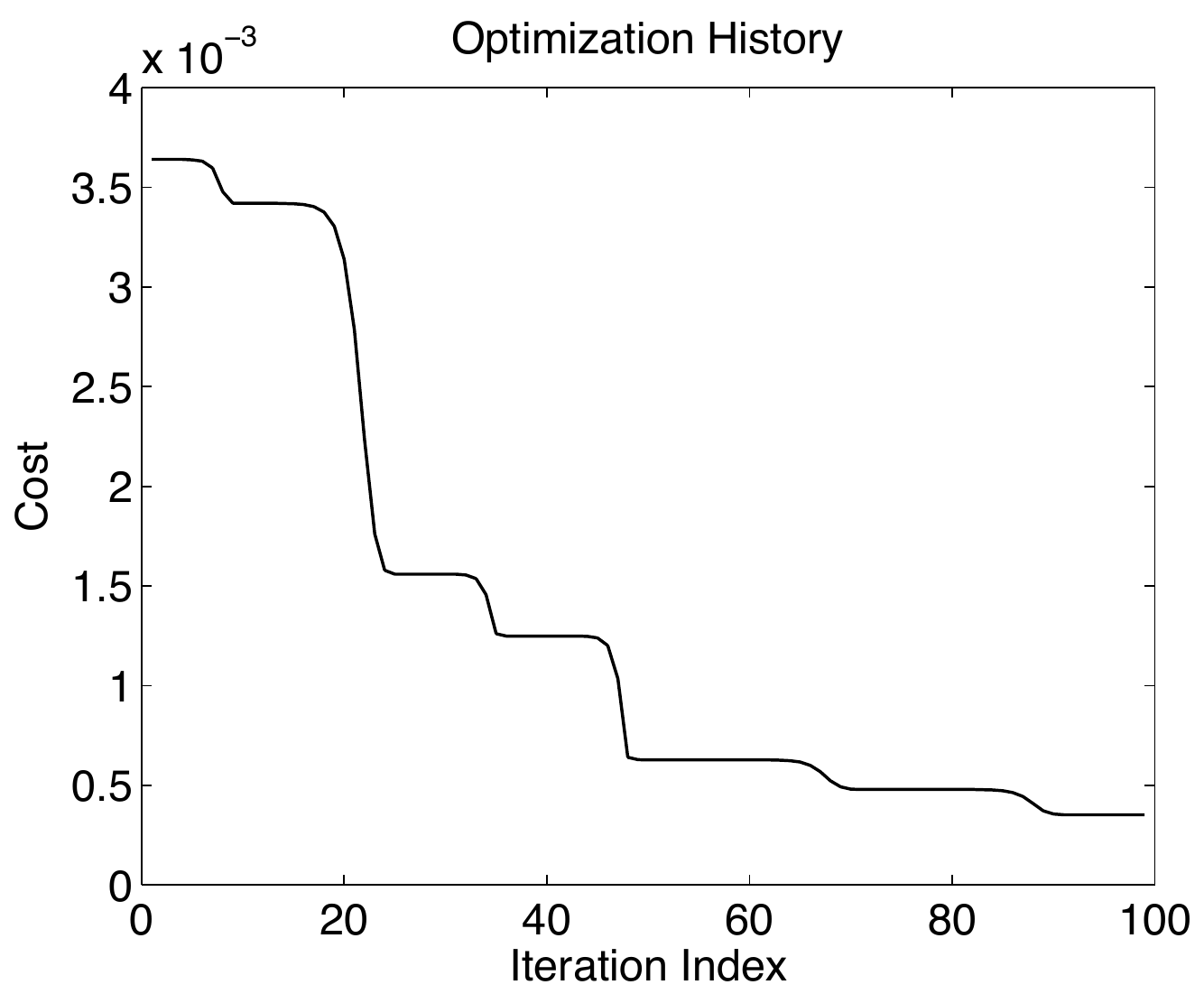}}
\caption{Left: Optimal control function $\uop$ for $y_d$ in Figure \ref{fig:Cosine_On_Clover_Surface}.  Right: Decrease of cost functional $\cJ$.}
\label{fig:Cosine_On_Clover_Optim}
\end{center}
\end{figure}

\section{Conclusion and future work}

The mean curvature operator is only locally-coercive, which leads to several difficulties in proving the existence of solution to the PDE. Using two approaches, (i) the implicit function theorem (see \thmref{thm:state_strong}) and (ii) a fixed point theorem (see \thmref{thm:fixed_point}), we provide a complete second order analysis to this PDE. The fixed point approach (ii) requires a boundary data smallness condition, but no such assumption is needed in (i).  We handle (i) by proving various Fr\'echet differentiability results, where as for (ii) we prove a new result for second order elliptic PDEs in non-divergence form, where the lower order coefficients need not be bounded (for the bounded coefficient case, see \cite[Theorem~9.15]{DGilbarg_NTrudinger_2001a}).

By using the regularity results for the PDE, we rigorously justify the first and second order sufficient optimality conditions and further tackle the \emph{2-norm discrepancy} in the $L^p$-$L^2$ pair.
We discretize the PDE using a finite element method and prove quasi-optimal error estimates for the optimal control.

There are some possible extensions of this work. The first could be boundary control. The second is where the surface tension coefficient $K \in \real^{n\times n}$ in the operator
\[
    -\divg{ K \frac{\grad y}{\cQ(y)}}
\]
acts as an optimal control, and the right-hand-side $u$ acts as a driving force.  This would be especially applicable to material science, where the presence of colloidal particles on a surface, or interface, can modulate surface tension.

\bibliographystyle{plain}
\bibliography{references}
\end{document}